\documentclass[12pt]{amsart}
\usepackage{graphicx} 

\usepackage[backend=bibtex,style=alphabetic]{biblatex}
\usepackage{amssymb}
\usepackage{amsthm}
\usepackage{url}
\usepackage{graphicx}
\usepackage{tikz-cd}
\usetikzlibrary{babel}
\usepackage{tikz}
\usepackage{xcolor}
\usetikzlibrary{matrix,arrows,decorations.pathmorphing}
\usepackage{mathtools}
\usepackage{mathrsfs}
\usepackage{comment}
\usepackage{esvect}
\usepackage{stmaryrd}
\usepackage[colorlinks = true,
linkcolor = blue,
urlcolor  = blue,
citecolor = blue,
anchorcolor = blue]{hyperref}
\usepackage[all]{xy}
\theoremstyle{definition}
\newtheorem{thm}{Theorem}[section]
\newtheorem*{theorem*}{Theorem}

\newtheorem{prop}[thm]{Proposition} 

\newtheorem{cor}[thm]{Corollary}
\newtheorem{lem}[thm]{Lemma}
\newtheorem{thm-defi}[thm]{Theorem-Definition}

\newtheorem{defi}[thm]{Definition}
\newtheorem{rem}[thm]{Remark}
\newtheorem{ex}[thm]{Example}

\allowdisplaybreaks

\newcommand{\FF}{\mathbb{F}}
\newcommand{\BB}{\mathbb{B}}
\newcommand{\EE}{\mathbb{E}}
\newcommand{\ZZ}{\mathbb{Z}}

\newcommand{\RR}{\mathbb{R}}
\newcommand{\NN}{\mathbb{N}}

\newcommand{\PP}{\mathbb{P}}

\newcommand{\cB}{\mathcal{B}}

\newcommand{\cT}{\mathcal{T}}

\newcommand{\Chi}{\mathcal{X}}
\DeclareMathOperator{\Stab}{Stab}
\DeclareMathOperator{\Aut}{Aut}

\DeclareMathOperator{\St}{St}

\DeclareMathOperator{\Ker}{Ker}

\DeclareMathOperator{\GL}{GL}
\DeclareMathOperator{\GQ}{GQ}

\DeclareMathOperator{\h}{H}

\DeclareMathOperator{\End}{End}

\DeclareMathOperator{\Hom}{Hom}

\DeclareMathOperator{\Spec}{Spec}

\DeclareMathOperator{\coker}{coker}

\DeclareMathOperator{\Id}{Id}

\DeclareMathOperator{\C}{C}
\DeclareMathOperator{\im}{Im}

\newcommand{\st}{\mathrm{st}}
\newcommand{\un}{\mathrm{un}}
\newcommand{\dashun}{\textrm{-}\un}
\newcommand{\dashst}{\textrm{-}\st}

\begin{document}
	
	\title{The Unstable Complex in Bruhat-Tits Buildings\\ for Arithmetic Groups over Function Fields}
	\author{Gebhard B\"{o}ckle}
\address{Heidelberg University, IWR, Im Neuenheimer Feld 205, 69120, Heidelberg, Germany}
\email{gebhard.boeckle@iwr.uni-heidelberg.de}

\author{Sriram Chinthalagiri Venkata}
\address{Heidelberg University, IWR, Im Neuenheimer Feld 205, 69120, Heidelberg, Germany}
\email{sriram.chinthalagiri@iwr.uni-heidelberg.de}

	\date{\today}

\let\origmaketitle\maketitle
\def\maketitle{
  \begingroup
  \def\uppercasenonmath##1{} 
  \let\MakeUppercase\relax 
  \origmaketitle
  \endgroup
}

	\begin{abstract}
		Let $K$ be a function field in positive characteristic, $\infty$ be a fixed place of $K$ and $K_\infty$ be the completion of $K$ at $\infty$. By the work of Serre, it is well known that, for a suitable arithmetic subgroup $\Gamma \subset \GL_2(K)$, the $\Gamma$-unstable region of the Bruhat-Tits tree for $\GL_2(K_\infty)$ is naturally homotopy equivalent to the spherical Tits building for $\GL_2(K)$. Grayson, following Quillen's ideas, generalizes this homotopy equivalence to the non-semistable part of the Bruhat-Tits building for $\GL_r(K_\infty)$. Modifying the approach described by Grayson, we are also able show a similar homotopy equivalence for the $\Gamma$-unstable region, for $\Gamma \subset \GL_r(K)$ a principal congruence subgroup.    
	\end{abstract}
	
	\maketitle

	\section{Introduction}
	
	Let $\mathbb{F}_q$ be the finite field of order $q$ and prime characteristic $p$, so that $q=p^n$ for some $n$. Let $C$ be a smooth, geometrically irreducible projective curve over $\mathbb{F}_q$ with function field $K:=K(C)$. Fix a closed point $\infty$ of $C$ and let $A:= \Gamma (C - \infty,\mathcal{O}_{C})$ be the ring of functions on $C$ that are regular outside $\infty$, and $R\subset K$ the discrete valuation ring $\mathcal{O}_{C,\infty}$. We write $\varpi \in R$ for a uniformizer. The ring $R$ is not complete. We let $K_\infty$ be the completion of $K$ at $\infty$ and $\widehat R$ the completion of $R$ inside~$K_\infty$.
	
	Denote by  $W$ a $K$-vector space of finite dimension $r\ge2$. An interesting combinatorial object for actions of congruence subgroups of $\GL(W)$, such as $\GL_r(A)$, is the  \textit{Bruhat-Tits building of rank $r-1$} 
	that arises from the chosen place $\infty$; we shall denote it $\cB_\bullet(W)$, omitting the dependency of $\infty$. 
	Then $\cB_\bullet(W)$ is the simplicial complex whose vertices are homothety classes of full rank $R$-lattices inside $W$, and whose $d$-simplices, for an integer $d \geq 0$, are $d+1$-element sets of classes of $R$-lattices, such that for suitable lattice representatives one has $L_0\supsetneq L_1\supsetneq\ldots\supsetneq L_d\supsetneq \varpi L_0$. It turns out that $\cB_\bullet(W)$ is a chamber complex with chambers consisting of $(r-1)$ -simplices. The natural action of $\GL(W)$ on $\cB_\bullet(W)$ induces, for each $g \in \GL(W)$, a simplicial automorphism.

To motivate our work, let us briefly consider the case $r=2$, where $\cB_\bullet(W)$ is a Bruhat-Tits tree, and let $\Gamma\subset \GL(W)$ be a congruence subgroup such that $\Gamma$ contains no elements of finite order prime-to-$p$. This case is much studied in Serre's book \cite{serre}, and the following observations are from \cite[Section 2.9]{serre}. The group $\Gamma$ acts on $\cB_\bullet(W)$, and one calls a simplex $s$ of $\cB_\bullet(W)$ stable if its stabilizer $\Gamma_s=\Stab_\Gamma(s)$ is trivial and unstable otherwise. The unstable simplices form a sub simplicial complex $\cB_\bullet(W)^{\Gamma\dashun}$ of $\cB_\bullet(W)$. The following results are shown in \cite[Section 2.9]{serre}:
\begin{enumerate}
\item The complex  $\cB_\bullet(W)^{\Gamma\dashun}$ is a disjoint union of trees.
\item The trees in (1) can be canonically labeled by the elements in the $K$-rational projective space $\PP(W)$, that is the Tits building of $W$. 
\item This labelling in (2) can be interpreted as contracting the unstable complex to the boundary, or as a homotopy equivalence between $\cB_\bullet(W)^{\Gamma\dashun}$ and $\PP(W)$.
\item The quotient complex $\ZZ[\cB_\bullet(W)]^{\Gamma\dashst}:=\ZZ[\cB_\bullet(W)]/\ZZ[\cB_\bullet(W)^{\Gamma\dashun}]$ is a complex of finite free $\ZZ[\Gamma]$-modules concentrated in degrees $0$ and $1$.
\item Let $\St:=\ker(\ZZ[\PP(W)]\to\ZZ)$ be the Steinberg module; it is a $\ZZ[\GL(W)]$-module. Then the complex in (4) is a resolution of $\St$ as a $\ZZ[\Gamma]$-module, so that $\St$ is a finitely generated projective $\ZZ[\Gamma]$-module.
\item As a consequence, if we pass from $\Gamma$ to a congruence subgroup $\Gamma'\subset\Gamma$ that is normal in $\Gamma$, then the unstable trees in (1) shrink, but do not decompose. Moreover the canonical surjection $\ZZ[\cB_\bullet(W)]^{\Gamma'\dashst} \twoheadrightarrow \ZZ[\cB_\bullet(W)]^{\Gamma\dashst}$ is a homotopy equivalence of complexes of $\ZZ[\Gamma]$-modules.
\end{enumerate}
	
%
%
%
The aim of the paper is to obtain comparable results for $r>2$, where methodically we adapt the approach of Quillen from \cite{grayson}. Obviously (1) and (2) cannot hold on the nose, as for $r>2$ the unstable complex is connected. But a homotopy equivalence as in (3) from a Hardar-Narasimhan unstable complex to the Tits-building had been given in \cite{grayson}. Using this notion of stability, \cite{grayson} also establishes variants of (4) and (5); see \cite[Cor.4.2]{grayson}. The use of Harder-Narasimhan semistability is possible because \cite{grayson} interprets the vertices of $\cB_\bullet(W)$ as rank $r$ vector bundles on $C$; see \S3.2.


So what we do is to replace Hardar-Narasimhan stability by a stability notion built on sufficiently small congruence subgroups $\Gamma$ of $\GL(W)$ and establish natural analogs of (3)--(6). This necessitates a major adaption of the methods from \cite{grayson} to our setting.
	
	To state our main results, let us introduce  \textit{principal congruence subgroups} inside $\GL(W)$ that are the arithmetic groups $\Gamma$ that we consider. They depend on a projective $A$-submodule $P \subset W$ of rank $r$ and a proper non-zero ideal $I \subset A$, and are defined as the kernel of the natural map $\Aut_A(P) \rightarrow \Aut_A(P/IP)$, and denoted $\Gamma_P(I)$. For simplicity, in this introduction, we abbreviate $\Gamma:=\Gamma_P(I)$.

%
	 
The simplicial complex $\cB_\bullet(W)$ contains as a natural subcomplex, the complex $\cB_\bullet(W)^{\Gamma\dashun}$ of $\Gamma$-unstable simplices, i.e., of simplices whose $\Gamma$-stabilizer is non-trivial. For a decreasing chain of ideals, the resulting subcomplexes seem to shrink towards a boundary of the building, and our main result gives some precise meaning to this. Let  $\mathcal{T}_\bullet(W)$ be the spherical Tits building for $\GL(W)$, which may be regarded as the $K$-rational boundary of  $\cB_\bullet(W)$. Then we prove the following theorem.
	\begin{theorem*}(see Theorem \ref{T:main_thm_2})
	There is a 
	 natural $\Aut_A(P)$-equivariant homotopy equivalence between $\cB_\bullet(W)^{\Gamma\dashun}$ and $\mathcal{T}_\bullet(W)$.
	\end{theorem*}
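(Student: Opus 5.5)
We follow Quillen's approach as presented in \cite{grayson}, with Harder--Narasimhan instability replaced by $\Gamma$-instability. The bridge between $\cB_\bullet(W)$ and $\mathcal{T}_\bullet(W)$ will be the poset $\mathcal{F}$ of proper nonzero $K$-subspaces $0\neq V\subsetneq W$; its order complex is $\mathcal{T}_\bullet(W)$.

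\textbf{Step 1: the canonical destabilizing flag.} Interpret a vertex of $\cB_\bullet(W)$, i.e.\ a lattice class $[L]$, as a rank $r$ vector bundle $E_L$ on $C$, obtained by gluing $P$ (over $\Spec A$) with $L$ (at $\infty$). The level structure $I$ twists degrees by $\deg I$. Assuming a characterization of $\Gamma$-instability established earlier in the paper, we show the following: $[L]$ is $\Gamma$-unstable if and only if some nontrivial unipotent element of $\Gamma$ fixes $L$. This in turn holds if and only if there is a proper subspace $V\subset W$ for which the induced bundles on $V$ and $W/V$ satisfy a slope gap. Concretely, $\Hom(E_{W/V},E_V)\otimes I$ must have nonzero global sections compatible with $L$; by Riemann--Roch this amounts to $\mu_{\min}(E_V)-\mu_{\max}(E_{W/V})>c$ for a constant $c$ depending on $\deg I$ and the genus.

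For each unstable simplex $s$, define $\mathcal{F}_s\subset\mathcal{F}$ as the set of subspaces $V$ that are \emph{$\Gamma$-destabilizing} for every vertex of $s$. These are the $V$ for which the unipotent radical of the parabolic stabilizing $V$ meets $\Gamma_s$ nontrivially. We then need to check the following.
\begin{enumerate}
\item $\mathcal{F}_s\neq\emptyset$ for every unstable $s$. We would prove this by showing that the set of $u\in\Gamma$ fixing a simplex forms a unipotent group, since $\Gamma$ is torsion-free prime to $p$ for $I$ proper. By Borel--Tits, such a group has a canonical stabilized flag.
\item $\mathcal{F}_s$ is contractible, or at least the relevant fibres are. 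Following Grayson's argument, we show that the destabilizing subspaces are closed under sums and intersections that remain proper, or that they contain a canonical element (the analogue of the first step of the Harder--Narasimhan filtration). A poset with a maximum or minimum, or one admitting a monotone retraction $V\mapsto V\cap V_0$ or $V\mapsto V+V_0$, is contractible.
\end{enumerate}

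\textbf{Step 2: the comparison via Quillen's Theorem A.} Form the incidence poset
\[ Z=\{(s,V): s \text{ an unstable simplex},\ V\in\mathcal{F}_s\}. \]
We have the two projections $p_1\colon Z\to \cB_\bullet(W)^{\Gamma\dashun}$ (via the barycentric subdivision) and $p_2\colon Z\to\mathcal{F}$. The fibre of $p_1$ over $s$ is essentially $\mathcal{F}_s$, which is contractible by Step 1(2). The fibre of $p_2$ over $V$ is the subcomplex of simplices destabilized by some $V'\ge V$ (or $\le V$, depending on variance). We show it is contractible by a geodesic or convexity argument in the building. The set of lattices $L$ with the slope gap for $V$ is a "horoball" towards the boundary point $V$. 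It is star-shaped under the retraction $L\mapsto L\cap V_\infty\oplus \varpi^{-n}(\ldots)$, which pushes $L$ further toward $V$; equivalently, we use the contraction by the one-parameter subgroup of the torus associated to a splitting $W=V\oplus V'$. Theorem A then gives homotopy equivalences
\[ \cB_\bullet(W)^{\Gamma\dashun}\simeq Z\simeq \mathcal{F}=\mathcal{T}_\bullet(W). \]
All constructions are canonical in $(P,I)$, so they commute with $\Aut_A(P)$, which normalizes $\Gamma$. This gives equivariance.

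\textbf{Main obstacle.} The hard part is Step 1(2) together with the fibre contractibility of $p_2$. Unlike Harder--Narasimhan stability, $\Gamma$-instability has no built-in canonical maximal destabilizing subbundle. The set of $V$ destabilizing a given lattice need not be a lattice of subspaces, and an edge can be unstable while its endpoints are destabilized by different $V$'s. We would first try to define $\mathcal{F}_s$ directly from the fixed unipotent subgroup $\Gamma_s$, as the set of $V$ with $\Gamma_s\cap U_V\neq 1$. We would then prove contractibility by showing that this set is a union of closed stars in $\mathcal{T}_\bullet(W)$ around the canonical Borel--Tits flag of $\Gamma_s$. If that fails, we would fall back to Grayson's slope-based filtrations with constants adjusted by $\deg I$.
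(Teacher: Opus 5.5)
You have the right architecture: an incidence poset $Z\subset X\times\mathcal{T}$, Quillen's fibre lemma for both projections, and equivariance from $\Gamma\trianglelefteq\Gamma_P$. This is the proof of the paper's Proposition~\ref{T:lem1.9}. But the choice on which everything hinges is left open, and your main candidate for it fails.

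\textbf{The definition of $\mathcal{F}_s$ is not monotone.} Suppose $V\in\mathcal{F}_s$ means $\Gamma_s\cap U_V\neq1$, i.e.\ $(g-1)W\subset V\subset\ker(g-1)$ for some $1\neq g\in\Gamma_s$. This is an interval condition, and $\mathcal{F}_s$ is neither upward nor downward closed.

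For example, take $A=\FF_q[T]$, $\varpi=T^{-1}$, $P=A^3$, $I=(T)$ and $L=Re_1+Re_2+R\varpi e_3$. Corollary~\ref{C:prin_stab} gives $\Gamma_L=1+\FF_qTE_{13}+\FF_qTE_{23}$, with $E_{ij}$ the matrix units. Hence $\mathcal{F}_L=\{Ke_1+Ke_2\}\cup\{K(ae_1+be_2):(a,b)\in\FF_q^2\smallsetminus0\}$. This set contains $Ke_1$ but not $Ke_1+Ke_3$, and it contains $Ke_1+Ke_2$ but not $K(e_1+Te_2)$.

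So $V\mapsto X_V$ is not monotone for either orientation of $\mathcal{T}$, and condition (i) of Proposition~\ref{T:lem1.9} fails. Then $Z$ is not closed in the $\mathcal{T}$-variable, and the fibre of $p_2$ in Quillen's lemma is no longer $X_V$. Your ``depending on variance'' notices this but does not resolve it.

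\textbf{The paper's fix.} It drops the condition on $W/V$. It sets $\mathcal{T}_\gamma=\{V:\exists\,1\neq g\in\Gamma_\gamma,\ g|_V=\mathrm{id}\}$, and puts $\gamma\in X_V$ iff a \emph{single} nontrivial $g\in\Gamma_\gamma$ fixes $V$ pointwise. This family is monotone in $V$. It is nonempty because $\Gamma_\gamma$ is a finite $p$-group, so $0\neq W^{\Gamma_\gamma}\neq W$ (Lemma~\ref{L:basic_lemma}). No Borel--Tits or slope argument is needed, and your Riemann--Roch ``iff'' is not an equivalence anyway.

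Contractibility is then immediate. The map $\phi\colon V\mapsto V+W^{\Gamma_\gamma}$ sends $\mathcal{T}_\gamma$ to itself with $\phi\ge\mathrm{id}$, and its image has minimum $W^{\Gamma_\gamma}$. This minimum is $(\Gamma_P)_\gamma$-fixed because $\Gamma_\gamma\trianglelefteq(\Gamma_P)_\gamma$.

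\textbf{Equivariance.} This $(\Gamma_P)_\gamma$-\emph{equivariant} contractibility is what your ``everything is canonical'' step really needs. It makes $p_1$ a $\Gamma_P$-homotopy equivalence (Proposition~\ref{P:quillen_1}). Composing $p_2$ with a $\Gamma_P$-homotopy inverse of $p_1$ then gives an honest $\Gamma_P$-map rather than only a zigzag.

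\textbf{Contractibility of $X_V$.} This is the technical core (Theorem~\ref{T:un_contr}), and your horoball sketch does not supply it. A torus element acts by an automorphism of the building, so by itself it contracts nothing. Conjugation by $\mathrm{diag}(\varpi^{-n}|_V,1)$ need not even keep stabilizer elements inside $\Gamma$, since $\varpi^{-1}\notin A$ in general. Moreover, any contraction must keep whole simplices, with a common nontrivial stabilizer, inside $X_V$.

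The paper instead uses Grayson's maps $G_n(L)=L+\varpi^{-n-\hat\epsilon(L)}\widetilde L$, where $\widetilde L$ is a lattice in $V$.
\begin{itemize}
\item Any $g$ fixing $V$ pointwise and stabilizing $L$ also stabilizes $G_n(L)$, so $G_n$ preserves $X_V$ simplexwise.
\item Consecutive maps $G_n$ and $G_{n+1}$ are adjacent.
\item On a finite subcomplex, $G_n=\mathrm{id}$ for $n\ll0$ and $G_n=G_n\beta\alpha$ for $n\gg0$.
\end{itemize}
So every map from a sphere into $|X_V|$ is homotopic to one factoring through the contractible building of $W/V$, via $\alpha\colon L\mapsto L/(L\cap V)$.

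The section is $\beta(L')=\varpi^{-\epsilon(L')}\widetilde L\oplus L'$, with $\epsilon(L')=1+\min\{m:\h^0(C,\mathcal{E}_{L'}^\vee\otimes\widetilde{\mathcal{E}}(I+m\infty))\neq0\}$. The shift by $1$ in $\epsilon$ is exactly what ensures that $\beta$ of a simplex still has a common nontrivial stabilizer fixing $V$ pointwise.
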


The formulation and proof of the above theorem is modelled along that of \cite{grayson}. In particular, the idea is to find suitable subcomplexes $\cB_\bullet(W)_\sigma \subset \cB_\bullet(W)^{\Gamma\dashun}$, for any vertex $\sigma \in \mathcal{T}_0(W)$, which are contractible and have the following further property. If for any simplex $\gamma \in \cB_\bullet(W)^{\Gamma\dashun}$, we gather the various $\sigma$'s in $\mathcal{T}_\bullet(W)$ so that $\gamma \in \cB_\bullet(W)_\sigma$ into $\mathcal{T}_\bullet(W)_\gamma$, then 
also $\mathcal{T}_\bullet(W)_\gamma$ is a contractible subcomplex of $\mathcal{T}_\bullet(W)$; see Theorem \ref{T:lem1.9} for details, which is an equivariant version of \cite[Lem. 1.9]{grayson}.
It is worth pointing out here that, unlike \cite{grayson} where the corresponding $\cB_\bullet(W)_\sigma$'s are defined for simplices $\sigma \in \mathcal{T}_\bullet(W)$ thereby apriori showing a homotopy equivalence to the barycentric subdivision of $\mathcal{T}_\bullet(W)$, we take a cue from Serre's treatment in rank 2 and instead directly show the homotopy equivalence to the Tits building. 

In the concluding section, we explicitly describe a (simplicial) homological chain complex, $C_\bullet(\cB_\bullet(W),\ZZ)$ and consequently also $C_\bullet(\cB_\bullet(W)^{\Gamma\dashun},\ZZ)$. Then we define $\St^I(P)$ to be the $(r-1)$-st homology of the cokernel of $C_\bullet(\cB_\bullet(W)^{\Gamma\dashun},\ZZ)  \hookrightarrow C_\bullet(\cB_\bullet(W),\ZZ)$, where the cokernel corresponds to a complex consisting of $\Gamma$-stable chains. Using the theorem above as well as the long exact sequence in homology, there is a natural $\Gamma_P$-equivariant isomorphism $\GQ_I:\St^I(P) \simeq \St(W)$. Moreover in Theorem~\ref{thm:Thm2} we show that these isomorphisms are compatible as $I$ runs through non-zero proper ideals of $A$. 

The results of this paper are part of the PhD work of the second author under the guidance of the first author. In this ongoing doctoral work we use these results to better understand the relation of the harmonic cochains on the Bruhat-Tits building with the Steinberg module in higher ranks. In particular, for an ordered $K$-basis $\underline{w}=(w_1,\ldots,w_r)$, we expect to relate the cycle of $\Gamma$-stable chambers in the apartment associated with $\underline{w}$ to the modular symbol $[[\underline{w}]]$ of Ash-Rudolph(\cite{Ash_1}) via the map $\GQ_I$. Using this we aim to generalize certain results of Teitelbaum in rank $2$, regarding the existence and uniqueness of an extension of a harmonic cochain on the $\Gamma$-stable part to a  $\Gamma$-invariant harmonic cochain on the full building $\cB_\bullet(W)$; cf.~\cite[Lemma 19]{teitelbaum}.  

\subsection{Outline of the paper.} In \S2 we collect the necessary definitions and results regarding simplicial complexes. Since we are concerned with equivariant homotopy equivalences, following \cite{thevenaz} we record in this section equivariant counterparts of some results in \cite{grayson}. \S3 is devoted to the definition(s) of the Bruhat-Tits building as well as the action of an arithmetic subgroup on it. In \S4, we formulate and prove the main result of this paper. We conclude with \S5 on the relation between the various unstable complexes obtained by considering the arithmetic subgroups $\Gamma_P(J)$ for $J \subset I$ any non-zero proper ideal of $A$.

\subsection*{Acknowledgements}

Both the authors acknowledge support by Deutsche Forschungsgemeinschaft (DFG) through CRC-TRR 326 `Geometry and Arithmetic of Uniformized Structures', project number 444845124.

	\section{Preliminaries on simplicial complexes}
	
	We collect here important definitions and results on simplicial complexes which will be required by us later. We mainly follow \cite[\S1]{grayson} and for proving equivariant versions of Grayson's results we follow \cite{thevenaz}. For more details, we refer the reader there.

	Recall that a \textit{simplicial complex} is a pair $(V,X)$, where $V$ is called the set of vertices and $X$ is a collection of finite subsets of $V$ such that if $Y \in X$ and $Z \subset Y$, then $Z \in X$. A subset $Y \in X$ of $V$ of cardinality $d+1$, for $d \geq 0$, is called a \textit{$d$-simplex.} In particular, vertices are \textit{$0$-simplices}. We denote the collection of all $d$-simplices of the pair $(V,X)$ by $X_d$. Since $X_0=V$, we will often denote the pair $(V,X)$ by $X$. A simplicial complex $X$ is said to be a rank $n\in\NN_0$, if $X_n \neq \emptyset$ and $X_i = \emptyset  \ \forall i > n$. 
A \textit{simplicial map} $f:X \rightarrow Y$ between simplicial complexes is a map $X_0 \rightarrow Y_0$ such that the image of a simplex is again a simplex.   
	
	By the axiom of choice, we can choose a partial ordering on $X_0$ so that each simplex becomes totally ordered. This motivates the following definition
	
	\begin{defi}
		An \textbf{ordered simplicial complex} is a simplicial complex $X$ together with a partial order $\leq$ on $X_0$, which makes each simplex totally ordered. 
	\end{defi}
	
	\begin{ex}
		Let $d\geq 0$ be an integer. The \textit{standard $d$-simplex}, denoted $\Delta_d$, is the ordered simplicial complex with vertices $\{0,\ldots,d\}$, together with the order inherited from $\NN_0$, and its simplices are all finite subsets of $\{0,\ldots,d\}$. It has rank $d$.
	\end{ex}
	\subsection{\textnormal{\emph{Geometric realization, ($G$-)CW complexes and equivariant Whitehead's theorem}}}
	
	Let $
	[0,1]$ be the closed unit interval in $\RR$.
	One can attach to any simplicial complex $X$ its topological realization $|X|$, that may be described as a topological subspace of $[0,1]^{X_0}$, by
 \[|X| := \Big\{(t_v) \in \underset{v \in X_0}{\prod} [0,1] \mid \{v \in X_0|t_v \neq 0\} \text{ is a simplex in $X$ and } \underset{v \in X_0}{\sum} t_v=1\Big\}.\] In particular, we see that $|\Delta_d| =\{(t_i)^{d-1}_{i=0} \in \RR^{d+1}| \sum_i t_i =1\}$, which is the standard $d$- geometric simplex. Consequently, for each simplex $\sigma \in X_d$ which is simplicial complex in its own right, $|\sigma|$ is homeomorphic to $|\Delta_d|$. So now we can define the topology on $|X|$ as the weak topology with respect to these geometric simplices, in the sense that $M \subset |X|$ is closed if and only if $M \cap |\sigma| \subset |\sigma|$ is closed for all $\sigma \in X$.  
	
	Let $G$ be a group. A \textit{$G$-simplicial complex} is a simplicial complex $X$ together with an action of $G$ on $X$ such that each $g \in G$ acts via a simplicial map on $X$. Clear one has an induced $G$ action on $|X|$, and for it, the action of any $g\in G$ is continuous. Later we will regard the geometric realization $|X|$ as a $G$-CW complex. We recall the required definition. Call a topological space $\mathcal{Y}$ a $G$-space, if any $g\in G$ acts left continuously on $\mathcal{Y}$.  
	
	\begin{defi}(\cite[\S2]{mislin}, \cite[Ch.4, \S38]{Mun84}, \cite{may})\label{D:cwcomplex}
		\begin{itemize}
			\item[(i)] A topological space $\mathcal{Y}$ is said to be a \textbf{CW complex of rank $n$} if it admits a filtration by closed subsets $\mathcal{Y}_0 \subset \mathcal{Y}_1 \subset \ldots \subset \mathcal{Y}_n=\mathcal{Y}$ such that the following are satisfied:
			\begin{itemize}
				\item[(a)] $\mathcal{Y}_0 \subset \mathcal{Y}$ is a discrete topological space;
				\item[(b)] Let $\mathbb{D}^d:=\{(x_i)^{d}_{i=1} \in \RR^d|\ \sum_i x^2_i\leq 1\}$ denote the $d$-dimensional unit disc and $\mathbb{S}^{d-1}:=\partial \mathbb{D}^d$  the corresponding $(d-1)$-dimensional unit sphere. Then there exists a discrete topological space $P_d$ for all $0 \leq d \leq n$ and maps $f_d:\mathbb{S}^{d-1} \times P_d \rightarrow \mathcal{Y}_{d-1}$ and $\widehat{f}_d:\mathbb{D}^{d} \times P_d \rightarrow \mathcal{Y}_{d}$ such that we have a pushout diagram \[\begin{tikzcd}
					\mathbb{S}^{d-1} \times P_d \arrow[r,"f_d"] \arrow[d]
					& \mathcal{Y}^{d-1} \arrow[d]\\
					\mathbb{D}^{d} \times P_d \arrow[r,"\widehat{f}_d"] 
					& \mathcal{Y}_{d} \arrow[ul, phantom, "\lrcorner", very near start]
				\end{tikzcd}\] where the left vertical arrow is given by natural inclusion on the first coordinate and identity on the second coordinate, and the right vertical arrow is inclusion. 
			\end{itemize}
			\item[(ii)] Moreover, let $\mathcal{Y}$ be a $G$-space. Then $\mathcal{Y}$ is said to be a \textbf{G-CW complex}, if $\mathcal{Y}$ is a CW complex as above,  with the further condition that each $\mathcal{Y}_i$ is a closed $G$-subspace of $\mathcal{Y}$, each $P_d$ is a $G$-set and all the maps $f_d$ and $\widehat{f_d}$ are $G$-equivariant(with $G$-acting trivially on $\mathbb{S}^{d-1}$ and $\mathbb{D}^d$). 
		\end{itemize}
		
	\end{defi}

	\begin{lem}\label{L:simpl_cw}
		Let $X$ be a $G$-simplicial complex of rank $n$. Then $|X|$ is a $G$-CW complex. Moreover, for any two $G$-simplicial complexes $X,Y$, any $G$-simplicial map $X \rightarrow Y$ induces a canonical continuous $G$-map of $G$-CW complexes $|X| \rightarrow |Y|$.   
	\end{lem}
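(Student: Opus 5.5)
The plan is to take the filtration of $|X|$ by the geometric realizations of the skeleta. Let $X^{(d)}$ denote the subcomplex of simplices of dimension at most $d$, and put $\mathcal{Y}_d := |X^{(d)}| \subset |X|$. Each $\mathcal{Y}_d$ is closed: its intersection with any closed simplex $|\sigma|$ is a finite union of closed faces. Each $\mathcal{Y}_d$ is also $G$-stable, because $G$ acts by simplicial automorphisms, which preserve dimension (the action is by a group, so each $g$ is bijective on vertices and simplices). Since $X$ has rank $n$, we get $\mathcal{Y}_n = |X|$. The space $\mathcal{Y}_0 = X_0$ is discrete, since every subset meets each closed simplex in a finite set.

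For the cells, set $P_d := X_d$ with its induced $G$-action and the discrete topology. Fix once and for all a homeomorphism $h_d\colon \mathbb{D}^d \to |\Delta_d|$ carrying $\mathbb{S}^{d-1}$ onto $\partial|\Delta_d|$. For $\sigma \in X_d$ we need a characteristic map $|\Delta_d| \to |\sigma|$. The subtlety is equivariance: a vertex ordering of $\sigma$ gives an identification $|\Delta_d| \cong |\sigma|$, but $g$ may permute the vertices of $g\sigma$ relative to a chosen ordering, and elements of $\Stab_G(\sigma)$ may act on $|\sigma|$ by nontrivial permutations.

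To get honest $G$-equivariant maps with trivial action on the disc, I would work with the set $\widetilde{P}_d$ of \emph{ordered} $d$-simplices. This is a $G$-set, and a point $(v_0,\dots,v_d)$ maps to $|X|$ via barycentric coordinates $(t_i)\mapsto \sum t_i v_i$, composed with $h_d$. That map is visibly $G$-equivariant. The price is that each cell is now hit $(d+1)!$ times, so the square is no longer literally a pushout. The real fix is to pass to the barycentric subdivision $X'$, where $|X'| = |X|$ canonically and equivariantly. In $X'$ the vertices of a simplex (a flag of simplices of $X$) are canonically ordered by inclusion. Consequently $G$ preserves these orders, and the stabilizer of a simplex fixes it pointwise. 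So I replace $X$ by $X'$, whose rank is still $n$, and take $P_d := X'_d$ with the canonical characteristic maps $\widehat f_d(x,\tau) = \iota_\tau(h_d(x))$. Here $\iota_\tau$ is the order-preserving identification $|\Delta_d| \cong |\tau|$, and $f_d$ is its restriction to the boundary, which lands in $\mathcal{Y}_{d-1}$. Equivariance $g\circ\iota_\tau = \iota_{g\tau}$ holds because $g$ respects the canonical orders.

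The main step is then to verify that the square is a pushout. On the set level, $\mathcal{Y}_d$ is $\mathcal{Y}_{d-1}$ together with the disjoint union of the open $d$-simplices, and $\widehat f_d$ is a bijection of $\operatorname{int}\mathbb{D}^d \times P_d$ onto those open simplices. On the topological level, both the pushout and $\mathcal{Y}_d$ carry the weak topology determined by the closed simplices, so a set is closed in one exactly when it is closed in the other. This weak-topology comparison is the step I expect to need the most care, and I would check it directly from the definition of the topology on $|X|$.

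For the second assertion, a $G$-simplicial map $\phi\colon X\to Y$ induces $|\phi|\colon (t_v)\mapsto (s_w)$, where $s_w = \sum_{\phi(v)=w} t_v$. This map is affine on each closed simplex, hence continuous on each $|\sigma|$, hence continuous on $|X|$ by the weak topology. Equivariance follows from $\phi(gv) = g\phi(v)$. Canonicity is immediate from the formula.
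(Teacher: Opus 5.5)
Your proof is correct, but it takes a different route from the paper at exactly the point you flag. The paper keeps the cell structure of $X$ itself. It takes $\mathcal{Y}_d=\bigcup_{\sigma\in X_d}|\sigma|$ and $P_d=X_d$, chooses a homeomorphism $\mathbb{D}^d\to|\sigma|$ for each simplex separately, and says the $G$-CW conditions are ``clear from the construction''. As you observe, $G$-equivariance of $\widehat f_d$ with $G$ acting trivially on $\mathbb{D}^d$ has two consequences. Every $g\in G_\sigma$ must fix $|\sigma|$ pointwise. The chosen maps must also satisfy $\widehat f_{d,g\sigma}=g\circ\widehat f_{d,\sigma}$. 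Neither holds automatically for an arbitrary $G$-simplicial complex; think of $\ZZ/2$ swapping the endpoints of an edge. So the paper's construction, read literally, works only when stabilizers fix their simplices vertexwise and the characteristic maps are chosen orbit by orbit. That does hold in the paper's applications: poset complexes such as $\mathcal{T}_\bullet(W)$ and barycentric subdivisions, and $\cB_\bullet(W)$ under congruence subgroups by Corollary~\ref{C:ari_grp}. Your passage to the barycentric subdivision $X'$ removes both issues at once, because simplices become flags canonically ordered by inclusion. This proves the lemma as stated for every $G$-simplicial complex. The cost is a finer cell structure. With respect to it, the map $|\phi|$ induced by a non-injective simplicial map need not be cellular: the barycentre of a collapsed simplex need not land on a vertex of $Y'$. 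The paper's structure, where valid, makes $|\phi|$ cellular. The statement asks only for a continuous $G$-map, and the equivariant Whitehead theorem as used later does not need cellularity, so this costs nothing here.
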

	
	\begin{proof}
		We first consider the case $G=\{1\}$. In the notation of the definition above, we set $\mathcal{Y}=|X|$, $\mathcal{Y}_d=\underset{\sigma \in X_d}{\bigcup} |\sigma|$ and $P_d=X_d$. We claim that $|X|$ becomes a CW complex with respect to these $\mathcal{Y}_i$'s and $P_d$'s. Condition \ref{D:cwcomplex}(i)(a) is clearly true since $|X|$ is equipped with a weak topology. 
		For a simplex $\sigma \in X_d$, denote by $\partial \sigma:=\{\tau| \tau \subset \sigma \land \tau \in X_{d-1}\}$. We choose a pushout diagram 
		\[\begin{tikzcd}
			\mathbb{S}^{d-1} \times \{\sigma\} \arrow[r,"f_{d}"] \arrow[d]
			& \cup_{\tau \in \partial\Delta^d} |\tau| \arrow[d]\\
			\mathbb{D}^{d} \times \{\sigma\} \arrow[r,"\widehat{f}_{d}"] 
			& \text{\textbar} \Delta^d|
			\arrow[ul, phantom, "\ulcorner", very near start] 
		\end{tikzcd}\] 		
		where $f_{d}$ and $\widehat{f}_{d}$ are suitable homeomorphisms. One can use this model diagram for any $\sigma\in X_d$ in place of $\Delta^d$, say with homeomorphisms $f_{d,\sigma}$ and $\widehat{f}_{d,\sigma}$, respectively. 
		Considering $\cup_{\sigma \in X_d} f_{d,\sigma}$ and $\cup_{\sigma \in X_d} \widehat{f}_{d,\sigma}$, we see that also Condition \ref{D:cwcomplex}(i)(b) holds for $|X|$, and hence $|X|$ is a CW complex.

		It is clear from the construction of $\mathcal{Y}_i$'s, $P_d$'s and $f_d,\widehat{f_d}$, that all the necessary conditions of \ref{D:cwcomplex}(ii) are also satisfied by them, so that $|X|$ is a $G$-CW complex, concluding the proof. 
	\end{proof}
	
	$G$-CW complexes have very nice $G$-equivariant homotopy properties. A fundamental and very useful result is the equivariant version of Whitehead's theorem. Before stating it, let us recall the following definitions.
	\begin{defi}\label{D:homotopy_equi}
		\begin{itemize}
			\item[(i)] Let $\mathcal{Y},\mathcal{Z}$ be two $G$-spaces and $f,g:\mathcal{Y} \rightarrow \mathcal{Z}$ be two continuous maps of $G$-spaces. Then $f$ and $g$ are said to be \textbf{$G$-homotopic} to each other, if there is a homotopy $F:\mathcal{Y} \times [0,1] \rightarrow \mathcal{Z}$ between $f$ and $g$ such that $F$ is also $G$-equivariant(with $G$ acting trivially on $[0,1]$). 
			\item[(ii)] A $G$-map $f:\mathcal{Y} \rightarrow \mathcal{Z}$ which is a homotopy equivalence is said to be a \textbf{$G$-equivariant homotopy equivalence}. If in addition  there exists a $G$-map $f':\mathcal{Z} \rightarrow \mathcal{Y}$ such that $f' \circ f$ is $G$-homotopic to $\Id_\mathcal{Y}$ and $f \circ f'$ is $G$-homotopic to $\Id_\mathcal{Z}$, then $f$ is said to be a \textbf{$G$-homotopy equivalence}. 
			\item[(iii)] A $G$-space $\mathcal{Y}$ is said to be \textbf{$G$-contractible}, if $f:\mathcal{Y} \rightarrow \{\ast\}$ is a $G$-homotopy equivalence.  
		\end{itemize}
	\end{defi}
	
	Note that below, for a $G$-space $\mathcal{Y}$ and a subgroup $H < G$, we set \[\mathcal{Y}^H:=\{y \in \mathcal{Y}|\ hy=y \ \forall \ h \in H\}.\]
	
	\begin{rem}
		A necessary condition for a $G$-space $\mathcal{Y}$ to be $G$-contractible is that $\mathcal{Y}^G \neq \emptyset$. Indeed, from (ii) above, we see that the existence of a $G$-map $f':\{\ast\} \rightarrow \mathcal{Y}$ implies that the image of $\ast$ is a $G$-invariant point. 
	\end{rem}

	\begin{thm}(Equivariant Whitehead's theorem I, \cite[Thm.2.2]{mislin}, \cite[Cor.3.3]{may})\label{T:equi_whitehead}
		Let $\mathcal{Y},\mathcal{Z}$ be $G$-CW complexes and $f:\mathcal{Y} \rightarrow \mathcal{Z}$ a $G$-map between them. If for all subgroups $H < G$, for all $y_0 \in \mathcal{Y}^H$ and for all $i\geq 0$, the induced map on homotopy groups $\pi_i(\mathcal{Y}^H,y_0) \rightarrow \pi_i(\mathcal{Z}^H,f(y_0))$ is an isomorphism, then $f$ is a $G$-homotopy equivalence.  
	\end{thm}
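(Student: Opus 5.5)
This is the equivariant Whitehead theorem, and the paper quotes it from \cite[Thm.2.2]{mislin} and \cite[Cor.3.3]{may}. The plan follows the classical argument: first prove an equivariant compression (``HELP'') lemma, then apply it twice to build a $G$-homotopy inverse. The basic observation is that a $G$-CW complex is built from equivariant cells $G/K\times\mathbb{D}^d$, where the $G$-set $P_d$ is decomposed into orbits $G/K$ for stabilizers $K<G$. A $G$-map out of $G/K\times\mathbb{D}^d$ is the same thing as a map $\mathbb{D}^d\to \mathcal{Z}^K$. In the same way, $G$-homotopies of such maps correspond to ordinary homotopies into the fixed-point set $\mathcal{Z}^K$. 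So every equivariant extension problem over one cell reduces to a non-equivariant problem in the fixed-point sets. The hypothesis is exactly what controls those fixed-point sets.

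\textbf{Step 1 (reduction to a relative statement).} Replace $f$ by the inclusion of $\mathcal{Y}$ into the $G$-mapping cylinder $M_f$. $M_f$ is again a $G$-space, $G$-homotopy equivalent to $\mathcal{Z}$, and $(M_f)^H=M_{f^H}$. The hypothesis says each $f^H:\mathcal{Y}^H\to\mathcal{Z}^H$ is a weak equivalence for every choice of basepoint. The long exact sequence then gives $\pi_i(M_{f^H},\mathcal{Y}^H)=0$ for all $i$ and all basepoints. So the pair $(M_f,\mathcal{Y})$ is ``$H$-$n$-connected'' for every $H$ and every $n$.

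\textbf{Step 2 (equivariant compression lemma).} Let $(\mathcal{X},\mathcal{A})$ be a relative $G$-CW complex and $(\mathcal{E},\mathcal{Y})$ a pair of $G$-spaces with $\pi_i(\mathcal{E}^H,\mathcal{Y}^H)=0$ for all $H$ and $i$. The lemma says that a $G$-map of pairs $(\mathcal{X},\mathcal{A})\to(\mathcal{E},\mathcal{Y})$ is $G$-homotopic rel $\mathcal{A}$ to a map into $\mathcal{Y}$. Prove it by induction over the skeleta, one equivariant cell $G/K\times\mathbb{D}^d$ at a time. For such a cell the problem is to compress a map $(\mathbb{D}^d,\mathbb{S}^{d-1})\to(\mathcal{E}^K,\mathcal{Y}^K)$ into $\mathcal{Y}^K$ rel boundary. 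This is possible because $\pi_d(\mathcal{E}^K,\mathcal{Y}^K)=0$. The resulting homotopy is extended $G$-equivariantly by $gx\mapsto g\cdot(\cdot)$; this is well defined precisely because the image lies in the $K$-fixed set. It is then extended over the remaining cells by the equivariant homotopy extension property of $G$-CW pairs, which comes from the pushouts in Definition \ref{D:cwcomplex}(ii). For infinite-dimensional complexes the homotopies are concatenated on the time intervals $[1/2^{d+1},1/2^d]$. In our simplicial setting the complexes have finite rank by Lemma \ref{L:simpl_cw}, so this issue does not even arise.

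\textbf{Step 3 (conclusion).} First apply Step 2 to $(\mathcal{Z},\emptyset)$ with the retraction into $M_f$. This gives a $G$-map $f':\mathcal{Z}\to\mathcal{Y}$ such that $f\circ f'\simeq_G\Id_\mathcal{Z}$. Next apply Step 2 to the pair $(\mathcal{Y}\times[0,1],\mathcal{Y}\times\{0,1\})$, which is a relative $G$-CW complex. This compresses the homotopy in $M_f$ between $\Id_\mathcal{Y}$ and $f'\circ f$ to a $G$-homotopy in $\mathcal{Y}$, so $f'\circ f\simeq_G\Id_\mathcal{Y}$.

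The main obstacle is Step 2. The delicate points are the correct translation between $G$-maps on orbit cells and maps into fixed-point sets, and verifying the equivariant homotopy extension property for $G$-CW pairs. Basepoint issues are handled by requiring the hypothesis for all $y_0\in\mathcal{Y}^H$, which covers every path component of the fixed-point sets.
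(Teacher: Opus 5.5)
Your strategy is the standard HELP argument behind the references the paper cites; the paper itself gives no proof. You pass to the equivariant mapping cylinder and prove a cell-by-cell compression lemma, using that $G$-maps $G/K\times\mathbb{D}^d\to\mathcal{E}$ are the same as maps $\mathbb{D}^d\to\mathcal{E}^K$. You then apply that lemma twice.

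There is, however, a genuine gap in Step 1, where you assert that the hypothesis makes every $f^H$ a weak equivalence. The same gap sits in your closing claim that quantifying over all $y_0\in\mathcal{Y}^H$ covers every path component of the fixed-point sets. It covers the components of $\mathcal{Y}^H$. When $\mathcal{Y}^H\neq\emptyset$, pointed bijectivity on $\pi_0$ then also gives surjectivity onto $\pi_0(\mathcal{Z}^H)$. But if $\mathcal{Y}^H=\emptyset\neq\mathcal{Z}^H$, the hypothesis is vacuous for that $H$, and not every component of $(M_f)^H$ meets $\mathcal{Y}^H$. The compression in Step 2 then fails already for an equivariant $0$-cell $G/K\times\mathbb{D}^0$ of $\mathcal{Z}$ sent to a point of $\mathcal{Z}^K$ with $\mathcal{Y}^K=\emptyset$. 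There is no path in $(M_f)^K$ to $\mathcal{Y}^K$, so Step 3 cannot produce $f'$.

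This cannot be patched, because the statement as literally quoted is false in exactly this case. Let $G=\mathbb{Z}$ act on $\mathcal{Y}=\mathbb{R}$ by translations; this is a $G$-CW complex of rank $1$ with $P_0=P_1=\mathbb{Z}$ free. Let $\mathcal{Z}=\{\ast\}$. For $H=\{0\}$ the map $\mathbb{R}\to\{\ast\}$ is an isomorphism on all $\pi_i$. For $H\neq\{0\}$ one has $\mathcal{Y}^H=\emptyset$, so the hypothesis holds vacuously. Yet there is no $G$-map $\{\ast\}\to\mathbb{R}$ at all.

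What your argument correctly proves is the version in May and Mislin, where $f$ is a weak $G$-equivalence. That means: for every $H$, the map $\pi_0(\mathcal{Y}^H)\to\pi_0(\mathcal{Z}^H)$ is a bijection (so $\mathcal{Z}^H\neq\emptyset$ forces $\mathcal{Y}^H\neq\emptyset$), and $\pi_i$ is an isomorphism for $i\geq 1$ at all basepoints. You should make that hypothesis explicit in Step 1.

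The paper's applications are unaffected. Corollary \ref{C:cor_equi_white} explicitly assumes $\mathcal{Y}^H\neq\emptyset$, which is precisely the missing $\pi_0$-surjectivity when the target is a point.

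A minor slip: in Step 3 you should compress the inclusion $\mathcal{Z}\hookrightarrow M_f$ and then compose with the retraction $M_f\to\mathcal{Z}$, rather than use a retraction into $M_f$.
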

	
	Applying the theorem to the constant map $\mathcal{Y} \rightarrow \{\ast\}$, we get as an immediate consequence,  
	\begin{cor}(Equivariant Whitehead's theorem II)\label{C:cor_equi_white}
		Let $\mathcal{Y}$ be a $G$-space and suppose that for all subgroups $H < G$, $\mathcal{Y}^H \neq \emptyset$. If for any choice of base points $y_0 \in \mathcal{Y}^H$ and all $i \geq 0$, the homotopy groups $\pi_i(\mathcal{Y}^H,y_0)$ are trivial, then $\mathcal{Y}$ is $G$-contractible. 
	\end{cor}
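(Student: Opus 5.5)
The plan is to apply Theorem~\ref{T:equi_whitehead} to the constant $G$-map $f:\mathcal{Y}\rightarrow \{\ast\}$, where $\{\ast\}$ is the one-point space with the trivial $G$-action. For this we must view $\{\ast\}$ as a $G$-CW complex: take $\mathcal{Y}_0=\{\ast\}$, $P_0=\{\ast\}$ with trivial action, and all higher $P_d=\emptyset$. For the source we implicitly need $\mathcal{Y}$ to be a $G$-CW complex, since Theorem~\ref{T:equi_whitehead} is stated in that setting. In the intended applications $\mathcal{Y}=|X|$ for a $G$-simplicial complex $X$, so Lemma~\ref{L:simpl_cw} supplies this structure. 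I would therefore read the corollary with this standing hypothesis.

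Next I would check the hypotheses of Theorem~\ref{T:equi_whitehead}. Fix a subgroup $H<G$. The fixed point set $\{\ast\}^H$ equals $\{\ast\}$, so all of its homotopy groups are trivial. Any $y_0\in\mathcal{Y}^H$ is sent by $f$ to $\ast$, and by assumption $\pi_i(\mathcal{Y}^H,y_0)$ is trivial for every $i\ge 0$. Hence the induced map $\pi_i(\mathcal{Y}^H,y_0)\rightarrow \pi_i(\{\ast\},\ast)$ is a map between trivial groups (pointed sets when $i=0$), and so it is an isomorphism.

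The only delicate point, and where the hypothesis $\mathcal{Y}^H\neq\emptyset$ enters, is the case $i=0$. Whitehead's criterion quantifies over base points $y_0\in\mathcal{Y}^H$. If $\mathcal{Y}^H$ were empty the condition would hold vacuously, yet $\emptyset\rightarrow\{\ast\}$ is not a bijection on $\pi_0$. The nonemptiness assumption together with $\pi_0(\mathcal{Y}^H,y_0)=\ast$ says exactly that $\mathcal{Y}^H$ is nonempty and path connected, so $\pi_0$ really is bijective onto $\pi_0(\{\ast\}^H)$.

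Theorem~\ref{T:equi_whitehead} then shows that $f$ is a $G$-homotopy equivalence, which is the definition of $G$-contractibility in Definition~\ref{D:homotopy_equi}(iii). As a consistency check, the $G$-homotopy inverse $\{\ast\}\rightarrow\mathcal{Y}$ picks out a point of $\mathcal{Y}^G$; this point exists by the hypothesis with $H=G$, in line with the preceding remark.
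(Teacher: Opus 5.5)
Your proposal is correct and is the paper's argument: the paper obtains the corollary by applying Theorem~\ref{T:equi_whitehead} to the constant map $\mathcal{Y}\to\{\ast\}$. Your two additional remarks are accurate refinements that the paper leaves implicit: $\mathcal{Y}$ must be read as a $G$-CW complex for that theorem to apply, and the hypothesis $\mathcal{Y}^H\neq\emptyset$ is what makes the map bijective on $\pi_0$.
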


\begin{rem}\label{R:non-equi-whitehead}
If $G$ is the trivial group, then the above results are simply the standard, non-equivariant forms of Whitehead's Theorem.
\end{rem}	

	\subsection{\textnormal{\emph{Homotopy properties of $G$-simplicial complexes}}}
	
	\begin{defi}(\cite[Defn.1.2]{grayson})
		Let $X,Y$ be $G$-simplicial complexes with $X$ ordered. Two $G$-simplicial maps $f,g:X \rightarrow Y$ are said to be \textbf{adjacent}, if for each simplex $\sigma$ of $X$ and for each $x \in \sigma$, the set 
		\[\{f(x')|\ x' \in \sigma \text{ and } x' \leq x\} \cup \{g(x') | \ x' \in \sigma \text{ and } x \leq x'\}\] is a simplex in $Y$. 
	\end{defi}

	\begin{rem}
		
		The above definition corrects a typographical error of \cite[Definition~1.2]{grayson}. The correction is needed in the proof of Proposition \ref{P:adj_1} below that is \cite[Corollary~1.3]{grayson}.

	\end{rem}
	
	\begin{defi}\label{D:adj_1}
		Given ordered $G$-simplicial complexes $X,Y$, the \textbf{product simplical complex}, denoted $X \times Y$, is the simplical complex, whose vertex set is $X_0 \times Y_0$, and $\{(x_1,y_1),\ldots,(x_i,y_i)\}$ is a simplex iff $x_1 \leq \ldots\leq x_i$ (resp. $y_1 \leq \ldots\leq y_i$) and $\{x_1,\ldots,x_i\}$ (resp. $\{y_1,..,y_i\}$) is a simplex in $X$ (resp. $Y$). It is again a $G$-simplicial complex.
		
	\end{defi}
	
	\begin{lem}(\cite[Lemma 1.1]{grayson})\label{L:prod_sim}
		The natural map $|X \times Y| \rightarrow |X| \times |Y|$ is a $G$-homeomorphism. 
	\end{lem}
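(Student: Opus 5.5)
We deduce the statement from the non-equivariant fact \cite[Lemma 1.1]{grayson} (the classical result of Milnor/Eilenberg--Zilber for ordered simplicial complexes), together with a check that everything in sight is compatible with the $G$-action. First we fix the map. For a point of $|X\times Y|$, written as $(t_{(x,y)})$ with $t_{(x,y)}\neq 0$ only on a simplex $\{(x_1,y_1),\ldots,(x_i,y_i)\}$, we set
\[
p_1\bigl((t_{(x,y)})\bigr)=\Bigl(\textstyle\sum_{y}t_{(x,y)}\Bigr)_{x\in X_0},\qquad p_2\bigl((t_{(x,y)})\bigr)=\Bigl(\textstyle\sum_{x}t_{(x,y)}\Bigr)_{y\in Y_0}.
\]
These are exactly the realizations of the two projection maps $X\times Y\to X$ and $X\times Y\to Y$. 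Both projections are simplicial, because the image of a simplex of $X\times Y$ under either projection is a simplex by the definition in Definition~\ref{D:adj_1}. The natural map in the statement is $(|p_1|,|p_2|)$.

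Next we handle equivariance. $G$ acts diagonally on $X_0\times Y_0$, and each projection commutes with this action. By Lemma~\ref{L:simpl_cw}, the realizations $|p_1|$ and $|p_2|$ are therefore continuous $G$-maps, so the product map is a continuous $G$-map. One point needs checking: for $X\times Y$ to be a $G$-simplicial complex, $G$ must send simplices to simplices. Since the product structure uses the orders, we assume, as is implicit in the paper, that $G$ preserves the orders on $X_0$ and $Y_0$ (this holds for type-preserving actions on buildings, ordered by inclusion of lattices). Under that assumption $g$ preserves chains $x_1\le\cdots\le x_i$.

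It remains to show the map is a homeomorphism. A continuous $G$-equivariant bijection whose inverse is continuous has an automatically $G$-equivariant inverse, so this suffices, and it is exactly Grayson's non-equivariant lemma. Were we to reprove it, we would argue simplex by simplex. For simplices $\sigma\in X_a$ and $\tau\in Y_b$, the subcomplex $\sigma\times\tau$ is the standard staircase triangulation of $|\Delta_a|\times|\Delta_b|$: the maximal simplices correspond to lattice paths, or shuffles. On each such prism the map is a bijective affine-piecewise map onto $|\sigma|\times|\tau|$, and these pieces glue compatibly on faces.

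The main obstacle is the topology. The product of two CW complexes, in the product topology, need not carry the weak topology. When $X$ and $Y$ have finite rank but are not locally finite, such as buildings over infinite residue fields, one has to interpret $|X|\times|Y|$ in the compactly generated (CW) product topology, as Grayson implicitly does. With that convention, a set is closed exactly when its intersection with each $|\sigma|\times|\tau|$ is closed. Since the map restricts to homeomorphisms $|\sigma\times\tau|\cong|\sigma|\times|\tau|$, the inverse is continuous, and this completes the proof.
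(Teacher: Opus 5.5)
Your proposal is correct and takes essentially the paper's route. The paper just says Grayson's non-equivariant proof extends to general $G$; you make this precise by observing two things: the natural map $(|p_1|,|p_2|)$ is $G$-equivariant, given that $G$ respects the orders as implicitly required in Definition~\ref{D:adj_1}, and the inverse of an equivariant homeomorphism is automatically equivariant. Your caveat about using the compactly generated product topology when neither complex is locally finite is a reasonable point the paper leaves implicit; it does not matter in the paper's application, where one factor is $\Delta_1$.
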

	
	\begin{proof}
		The proof of \cite[Lemma 1.1]{grayson} for $G=\{1\}$ extends to the case of general $G$.
	\end{proof}
	
	The notion of adjacent maps captures the idea of homotopy and on the level of simplicial complexes, it is explained by the following
	
	\begin{prop}\label{P:adj_1}
		Suppose $X$ is an ordered $G$-simplicial complex and $Y$ is a $G$-simplicial complex. Then adjacent simplicial maps from $X$ to $Y$ are $G$-homotopic, that is their geometric realizations are $G$-homotopic. 
	\end{prop}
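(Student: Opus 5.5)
The plan is to build a $G$-simplicial map $H \colon X \times \Delta_1 \rightarrow Y$ that restricts to $f$ on $X \times \{0\}$ and to $g$ on $X \times \{1\}$. Here $\Delta_1$ carries the trivial $G$-action and its natural order $0<1$. Once $H$ exists, Lemma \ref{L:simpl_cw} gives a continuous $G$-map $|H| \colon |X \times \Delta_1| \rightarrow |Y|$. Lemma \ref{L:prod_sim} identifies $|X \times \Delta_1|$ $G$-equivariantly with $|X| \times |\Delta_1| \cong |X| \times [0,1]$, where $G$ acts trivially on $[0,1]$. Composing the inverse of this homeomorphism with $|H|$ gives a $G$-homotopy in the sense of Definition \ref{D:homotopy_equi}(i). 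Its ends are $|f|$ and $|g|$, because the identification restricts to the obvious maps on $X \times \{0\}$ and $X \times \{1\}$.

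On vertices, set $H(x,0)=f(x)$ and $H(x,1)=g(x)$. This is $G$-equivariant because $f$ and $g$ are $G$-maps and $G$ fixes $0$ and $1$. The main point is to check that $H$ is simplicial. By Definition \ref{D:adj_1}, a simplex of $X\times\Delta_1$ is a set $\{(x_1,\epsilon_1),\ldots,(x_k,\epsilon_k)\}$ with $x_1\le\cdots\le x_k$ forming a simplex of $X$ and $\epsilon_1\le\cdots\le\epsilon_k$ in $\{0,1\}$. It suffices to treat maximal such simplices, since faces of simplices are simplices in $Y$.

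For a maximal simplex, take $\sigma=\{x_1<\cdots<x_m\}$ in $X$ and a switching index $j$. Its vertices are $(x_1,0),\ldots,(x_j,0),(x_j,1),\ldots,(x_m,1)$. The image under $H$ is
\[\{f(x')\mid x'\in\sigma,\ x'\le x_j\}\cup\{g(x')\mid x'\in\sigma,\ x_j\le x'\},\]
which is exactly the set in the (corrected) definition of adjacency with $x=x_j$. So it is a simplex of $Y$. The extreme cases, with all entries $0$ or all entries $1$, are faces of the cases $j=m$ and $j=1$ respectively, so they are covered. Hence $H$ is a $G$-simplicial map.

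I expect the main obstacle to be the topological identification rather than the combinatorics. One must make sure that Lemma \ref{L:prod_sim} really gives a homeomorphism onto the product and not just a continuous bijection. Since $|\Delta_1|$ is compact, the product of the CW topology with $[0,1]$ agrees with the weak topology, so this is fine. One must also confirm that the homeomorphism is compatible with the two end inclusions, which is immediate from its construction on vertices.
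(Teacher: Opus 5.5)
Your proposal is correct and matches the paper's proof: the paper also defines $F\colon X\times\Delta_1\to Y$ by $(x,0)\mapsto f(x)$ and $(x,1)\mapsto g(x)$, notes that adjacency is exactly what makes $F$ a $G$-simplicial map, and concludes via Lemma~\ref{L:prod_sim} and $|\Delta_1|=[0,1]$. You add two details the paper leaves implicit: the explicit check on the staircase simplices, and the remark that compactness of $[0,1]$ makes the product topology agree with the weak topology.
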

	\begin{proof}
		Let $f,g:X \rightarrow Y$ be two adjacent $G$-simplicial maps. Recall that $\Delta_1=\{0,1\}$ is the $1$-simplex, with vertices being $0$ and $1$, and the only $1$-simplex being ${0,1}$ itself. Adjacency is precisely the condition required to ensure that the following map is $G$-simplicial(with $G$ acting trivially on $\Delta_1$)  \[F:X \times \Delta_1 \rightarrow Y \] sending \[(x,i) \mapsto \begin{cases}
			f(x) & \text{; }i=0\\
			g(x) & \text{; }i=1.
		\end{cases}.\] Lemma \ref{L:prod_sim} finishes the proof since the geometric realization $|\Delta_1|$ is the closed unit interval $[0,1]$. 
	\end{proof}
	

\subsection{\textnormal{\emph{Simplicial complexes on $G$-posets and their (equivariant) homotopy properties}}}
	
	Let $G$ be a group. In the following, by a \textit{$G$-poset} we mean a poset equipped with a (left)$G$-action which is compatible with the underlying partial order. Many results below are generalized from their non-equivariant counterparts, which can be recovered by simply choosing $G=1$(or by choosing a trivial action of $G$).
	
	\begin{ex}\label{E:poset-simplicial}
		\begin{enumerate}
			\item Let $\mathcal{T}$ be a $G$-poset. Then one obtains a $G$-simplicial complex, whose vertex set is $\mathcal{T}$ and whose simplices are the finite, totally ordered subsets of~$\cT$.
			\item Let $\EE$ be a $G$-poset and suppose that it carries a $\mathbb{Z}$ action commuting with the $G$-action. Suppose the $\mathbb{Z}$-action is \textbf{cofinal}, which means that for any $x,x' \in \EE$, there exists $n \in \mathbb{Z}$ s.t. $x+n \geq x'$. Then 
we denote by $\langle \EE \rangle$, the $G$-simplicial complex, whose vertex set are the $\mathbb{Z}$-orbits of $\EE$ and where a simplex is any finite nonempty set of vertices whose union is a chain in $\EE$. For $t \in \EE$, let $\langle t \rangle \in \langle \EE \rangle$ denote the $\ZZ$-orbit of~$t$. 
			\item The set of integers $\ZZ$ can be regarded as a $G$-poset, with trivial $G$-action and cofinal $\ZZ$-action. 
			\end{enumerate}
	\end{ex}
	
	\begin{lem}\label{L:poset_sim}(\cite[Lemma 1.5]{grayson})
		Let $\EE_1,\EE_2$ be two $G$-posets with a cofinal $\ZZ$-action 
		and let $f:\EE_1 \rightarrow \EE_2$ be a $G$-map satisfying 
		\begin{itemize}
			\item[(i)] for $t \leq t'$ in $\EE_1$, $f(t) \leq f(t')$;
			\item[(ii)] for all $t \in \EE
			_1$, $f(t+1)=f(t) + 1$.
		\end{itemize}
		Then the map $\langle f \rangle:\langle \EE_1 \rangle \rightarrow \langle \EE_2\rangle$, defined by sending $\langle x \rangle \mapsto \langle f(x) \rangle$, is a $G$-simplicial map. 
	\end{lem}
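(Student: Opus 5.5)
We must check two things: that $\langle f\rangle$ is well defined on vertices, and that it sends simplices to simplices. Equivariance then comes for free from $f$ being a $G$-map.

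\emph{Well-definedness.} Vertices of $\langle \EE_1\rangle$ are $\ZZ$-orbits. If $\langle x\rangle=\langle x'\rangle$, then $x'=x+n$ for some $n\in\ZZ$. Condition (ii), applied $|n|$ times (for negative $n$ via $f(t)=f((t-1)+1)=f(t-1)+1$), gives $f(x+n)=f(x)+n$. Hence $\langle f(x')\rangle=\langle f(x)\rangle$, so $\langle f\rangle$ is a well-defined map of vertex sets.

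\emph{Simplices go to simplices.} Let $\{v_0,\dots,v_d\}$ be a simplex of $\langle\EE_1\rangle$. By definition, the union $S=\bigcup_i v_i\subset \EE_1$ of these orbits is a chain. The $\ZZ$-orbits of the elements of $f(S)$ are exactly the $\langle f\rangle(v_i)$. By (ii), $f(S)$ is closed under $\pm1$, so the union of the orbits $\langle f\rangle(v_i)$ equals $f(S)$. By (i), $f$ is monotone, so the image of a chain is totally ordered, and $f(S)$ is a chain in $\EE_2$. The set $\{\langle f\rangle(v_i)\}$ is finite and nonempty, hence a simplex of $\langle\EE_2\rangle$.

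\emph{Equivariance.} For $g\in G$ we have $g\langle x\rangle=\langle gx\rangle$, since the actions of $G$ and $\ZZ$ commute. Therefore
\[\langle f\rangle(g\langle x\rangle)=\langle f(gx)\rangle=\langle g f(x)\rangle=g\langle f\rangle(\langle x\rangle).\]
So $\langle f\rangle$ is a $G$-simplicial map.

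The only point needing care is the claim that the union of the image orbits equals $f(S)$, which uses (ii) in both directions. Cofinality is not needed for this argument.
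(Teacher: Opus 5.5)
Your proof is correct and follows the same approach as the paper. The paper cites Grayson's Lemma 1.5 for $G=1$ and says the general case is identical; your explicit checks (well-definedness via (ii), chains mapping to chains via (i) and (ii), and equivariance from the commuting $G$- and $\ZZ$-actions) are that argument written out, and your remark that cofinality is not needed here is accurate.
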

	
	\begin{proof}
		For $G=1$ this is proved in \cite[Lemma 1.5]{grayson}. 
		For general $G$ the proof is identical. 
	\end{proof}
	We would like to extend the Definition \ref{D:adj_1} to the situation of posets with a cofinal $\ZZ$-action, so as to obtain a result similar to Proposition \ref{P:adj_1}. For this, we need the following definition
	
	\begin{defi}\label{D:augmentation}
		Let $\EE$ be a $G$-poset with a cofinal $\ZZ$-action. A map $\varepsilon:\EE \rightarrow \ZZ$ is said to be an \textbf{augmentation map}, if $\varepsilon$ satisfies the two conditions of Lemma \ref{L:poset_sim}.
		

Given $(\EE,\varepsilon)$, let $\EE_0$ denote the ordered simplicial complex with vertex set $\{ t\in \EE\mid \varepsilon(f)=0\}$ and whose $q$-simplices are all sets $\{ t_0,\ldots,t_q\}$ of vertices which can be indexed so that $t_0<t_1<\ldots<t_q< t_0+1$.

	\end{defi}
	
	\begin{rem}\label{R:aug_map}
%
Given $(\EE,\varepsilon)$, 
the simplicial complex $\EE_0$ is isomorphic to $\langle \EE \rangle$, but $\EE_0$, being a subset of $\EE$,  is moreover also ordered.
		
	\end{rem}
	
	\begin{defi}\label{D:adj_2}
		Let $\EE_1,\EE_2$ be two $G$-posets with a cofinal $\ZZ$-action and $\varepsilon:\EE_1 \rightarrow \ZZ$ be an augmentation map. Then two $G$-maps $f,g:\EE_1 \rightarrow \EE_2$ are said to be \textbf{adjacent} if they satisfy the following:-
		\begin{itemize}
			\item[(i)] for $t \in \EE_1$, $f(t) \leq g(t)$;
			\item[(ii)] for $t < t'$ in $\EE_1$ with $\varepsilon(t) < \varepsilon(t')$, $g(t) \leq f(t')$.
		\end{itemize}
	\end{defi}
	
	We can now state the analogue of Proposition \ref{P:adj_1} in this situation
	
	\begin{prop}(\cite[Lemma 1.8]{grayson})
		\label{P:adj_2}
		With $\EE_1,\EE_2$ as above and letting $f,g:\EE_1 \rightarrow \EE_2$ be two adjacent $G$-maps, the geometric realizations of $\langle f \rangle,\langle g\rangle:\langle \EE_1\rangle \rightarrow \langle \EE_2\rangle$ are $G$-homotopic to each other.     
	\end{prop}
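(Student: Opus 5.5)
The plan is to reduce to Proposition \ref{P:adj_1}. We replace $\langle \EE_1\rangle$ by the isomorphic ordered complex $(\EE_1)_0$ of Remark \ref{R:aug_map}, i.e. the vertices $t$ with $\varepsilon(t)=0$. This complex is a $G$-complex because $\varepsilon$ is $G$-equivariant, and hence constant on $G$-orbits once $G$ acts trivially on $\ZZ$. Its order is the one inherited from $\EE_1$, and it is $G$-invariant since $G$ respects the order. Under the isomorphism $(\EE_1)_0\cong\langle\EE_1\rangle$, the maps $\langle f\rangle,\langle g\rangle$ become the $G$-simplicial maps $t\mapsto\langle f(t)\rangle$ and $t\mapsto \langle g(t)\rangle$. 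By Lemma \ref{L:poset_sim}, these are simplicial. It therefore suffices to show that these two maps $(\EE_1)_0\to\langle\EE_2\rangle$ are adjacent in the sense of the definition preceding Proposition \ref{P:adj_1}. That proposition then yields a $G$-homotopy, and $G$-equivariance is automatic because the homotopy $F$ on $X\times\Delta_1$ is built from $G$-maps.

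To check adjacency, let $\sigma=\{t_0<\dots<t_q\}$ be a simplex of $(\EE_1)_0$, so $t_q<t_0+1$, and fix $t_i\in\sigma$. We must show that
\[\{\langle f(t_0)\rangle,\dots,\langle f(t_i)\rangle,\langle g(t_i)\rangle,\dots,\langle g(t_q)\rangle\}\]
is a simplex of $\langle\EE_2\rangle$. This means exhibiting representatives forming a chain in $\EE_2$. The candidate chain is
\[f(t_0)\le\dots\le f(t_i)\le g(t_i)\le\dots\le g(t_q)\le f(t_0)+1 .\]
The inequalities among the $f$'s and among the $g$'s follow from monotonicity (Lemma \ref{L:poset_sim}(i)), and the middle inequality is condition (i) of Definition \ref{D:adj_2}.

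The key step is the closing inequality $g(t_q)\le f(t_0+1)=f(t_0)+1$, which guarantees that the vertices lie in a single chain of one $\ZZ$-period, so that the set is genuinely a simplex rather than merely a collection of orbits. Here $t_q<t_0+1$, and $\varepsilon(t_q)=0<1=\varepsilon(t_0+1)$, so condition (ii) of Definition \ref{D:adj_2} gives exactly $g(t_q)\le f(t_0+1)$. More generally, every listed element then lies in the interval $[f(t_0),f(t_0)+1]$, so they form a chain.

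I expect the main obstacle to be bookkeeping about what a simplex of $\langle\EE_2\rangle$ is. A simplex is a set of orbits with some choice of representatives forming a chain. Two representatives may coincide as orbits, for instance $f(t_0)$ and $f(t_0)+1$, but that only collapses vertices, which is harmless since a face of a simplex is a simplex. One must also verify that the $G$-action on the product complex $(\EE_1)_0\times\Delta_1$ is compatible with the order so that Lemma \ref{L:prod_sim} applies. This holds because $G$ preserves the order on $\EE_1$ and acts trivially on $\Delta_1$.
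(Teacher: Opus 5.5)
Your proof is correct and is essentially the argument the paper relies on. The paper only cites Grayson's Lemma 1.8 for this statement, but it replays exactly your reduction in Claim 3 of the proof of Theorem \ref{T:un_contr}: pass to the ordered slice $\varepsilon^{-1}(0)$, use adjacency condition (ii) for $t_q<t_0+1$ to get $g(t_q)\le f(t_0)+1$ so that $f(t_0)\le\dots\le f(t_i)\le g(t_i)\le\dots\le g(t_q)$ stays within one period, and then invoke Proposition \ref{P:adj_1}. Your remark that $G$-equivariance rests on $\varepsilon$ being $G$-invariant (a $G$-map to $\ZZ$ with trivial $G$-action, as in Example \ref{E:poset-simplicial}(3)) is precisely what makes the slice and its identification with $\langle\EE_1\rangle$ $G$-stable, so the equivariant version goes through.
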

	
	\subsection{\textnormal{\emph{$G$-equivariant properties}}}
	
	We conclude this section by proving a $G$-equivariant version of \cite[Lemma 1.9]{grayson}. This requires us to also restate \cite[Cor.1.8]{quillen} in terms of $G$-equivariant homotopy, which we do so following \cite[Thm.1]{thevenaz}. 
	
	Checking homotopy properties of maps between posets can be easy. For example, if $f,g:\mathcal{U} \rightarrow \mathcal{T}$ is a map of posets such that $f(u) \leq g(u), \ \forall u \in \mathcal{U} $, then $|f|$ and $|g|$ are homotopic to each other(\cite[\S1.3]{quillen}). This can be enhanced to a $G$-homotopy, in the case of $G$-posets.
	\begin{lem}(\cite[Prop.1.1]{thevenaz})\label{L:quillen1.3}
		Let $\mathcal{U} , \mathcal{T}$ be $G$-posets and $f,g:\mathcal{U} \rightarrow \mathcal{T}$ two maps of $G$-posets such that $f(u) \leq g(u), \ \forall u \in \mathcal{U} $, then $|f|$ and $|g|$ are $G$-homotopic to each other.
	\end{lem}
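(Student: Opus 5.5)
The plan is to build an explicit $G$-equivariant homotopy from the product decomposition $|\mathcal{U}\times\Delta_1|\cong|\mathcal{U}|\times[0,1]$, mirroring the proof of Proposition \ref{P:adj_1}. View the poset $\{0<1\}$ as the $1$-simplex $\Delta_1$ with trivial $G$-action. Form the product poset $\mathcal{U}\times\{0<1\}$ with the product order, $(u,i)\le(u',i')$ iff $u\le u'$ and $i\le i'$, and the diagonal $G$-action $g(u,i)=(gu,i)$, which is compatible with the order.

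First I would define a map of posets
\[ H:\mathcal{U}\times\{0<1\}\to\mathcal{T},\qquad H(u,0)=f(u),\quad H(u,1)=g(u). \]
\emph{Monotonicity.} Suppose $(u,i)\le(u',i')$. If $i=i'$, then $H$ is order preserving because $f$ and $g$ are. If $i=0$ and $i'=1$, then
\[ f(u)\le f(u')\le g(u'), \]
using the hypothesis $f\le g$ pointwise.

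\emph{Equivariance.} $H$ is $G$-equivariant because $f$ and $g$ are.

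Hence $H$ induces a $G$-simplicial map between the order complexes of Example \ref{E:poset-simplicial}(1).

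Next I would identify the order complex of $\mathcal{U}\times\{0<1\}$ with the product simplicial complex of Definition \ref{D:adj_1}, built from the order complex of $\mathcal{U}$ and $\Delta_1$. Both are ordered; in the poset case we use the poset order. A finite chain in the product poset is exactly a set of pairs whose coordinates are weakly increasing and whose projections are chains, so the two simplicial structures coincide, $G$-equivariantly. Lemma \ref{L:prod_sim} then gives a $G$-homeomorphism $|\mathcal{U}\times\Delta_1|\cong|\mathcal{U}|\times[0,1]$, where $G$ acts trivially on $[0,1]$.

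Composing, $|H|$ becomes a $G$-equivariant map $|\mathcal{U}|\times[0,1]\to|\mathcal{T}|$. Its restriction to the ends $t=0,1$ is $|f|$ and $|g|$ respectively, since the subcomplexes $\mathcal{U}\times\{i\}$ realize to $|\mathcal{U}|\times\{i\}$. Equivalently, one can cite Proposition \ref{P:adj_1} directly: with $X$ the order complex of $\mathcal{U}$, the maps $f$ and $g$ are adjacent. Indeed, for a chain $\sigma$ and $x\in\sigma$, the set $\{f(x'):x'\le x\}\cup\{g(x'):x'\ge x\}$ is a chain in $\mathcal{T}$, because
\[ f(x')\le f(x)\le g(x)\le g(x'') \quad\text{for } x'\le x\le x''. \]
So Proposition \ref{P:adj_1} gives the $G$-homotopy immediately.

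The only subtle point is whether the adjacency definition, and hence Proposition \ref{P:adj_1}, covers the case where $Y$ is an order complex and $X$ is infinite. Both hold here, since the arguments are purely combinatorial and the weak topology in Lemma \ref{L:prod_sim} handles infiniteness. I therefore expect no real obstacle; the main care is checking that the chain condition in the adjacency definition is exactly what $f\le g$ provides.
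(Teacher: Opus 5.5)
Your proposal is correct and matches the paper's proof. The paper does exactly what your ``equivalently'' paragraph does: it checks that the set $\{f(u')\}\cup\{g(u'')\}$ is a chain, so that $f$ and $g$ are adjacent, and then invokes Proposition \ref{P:adj_1}. Your first route, via the poset map $H$ on $\mathcal{U}\times\{0<1\}$ and Lemma \ref{L:prod_sim}, simply unpacks the proof of that proposition in the poset setting.
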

	\begin{proof}
		Note that for each simplex $\sigma$ of $\mathcal{U}$ and for each $u \in \sigma$, the set 
		\[\{f(u')|\ u' \in \sigma \ \text{and} \ u' \leq u\} \cup \{g(u') | \ u' \in \sigma \ \text{and} \ u \leq u'\}\] is a totally ordered subset of $\mathcal{T}$ by hypothesis and hence a simplex in $\mathcal{T}$. Therefore $f,g$ are adjacent maps(\ref{D:adj_1}) and hence one can conclude by applying Proposition \ref{P:adj_1}. 
	\end{proof}
	We record two consequences of this lemma
	
	\begin{cor}\label{C:quillen1.3_1}
		Let $\mathcal{T}$ be a $G$-poset and suppose that it contains a smallest(or largest) element, which is moreover fixed under $G$. Then $\mathcal{T}$ is $G$-contractible. 
	\end{cor}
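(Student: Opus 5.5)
We prove the statement for a smallest element $t_0\in\mathcal{T}$ with $gt_0=t_0$ for all $g\in G$; the largest-element case is the same argument with the inequalities reversed. Let $c:\mathcal{T}\to\mathcal{T}$ be the constant map with value $t_0$. Because $t_0$ is $G$-fixed, $c$ is a map of $G$-posets: it is order preserving, and $c(gt)=t_0=gt_0=g\,c(t)$. Since $t_0$ is smallest, we have $c(t)=t_0\le t=\Id(t)$ for every $t\in\mathcal{T}$. Lemma \ref{L:quillen1.3} then says that $|c|$ and $|\Id|=\Id_{|\mathcal{T}|}$ are $G$-homotopic.

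Next we translate this into $G$-contractibility in the sense of Definition \ref{D:homotopy_equi}(iii). Let $p:|\mathcal{T}|\to\{\ast\}$ be the unique map, and let $s:\{\ast\}\to|\mathcal{T}|$ send $\ast$ to the vertex $t_0$. The map $s$ is a $G$-map precisely because $t_0$ is $G$-fixed. We check the two composites.
\begin{itemize}
\item[(a)] $p\circ s=\Id_{\{\ast\}}$ holds trivially.
\item[(b)] $s\circ p=|c|$, which is $G$-homotopic to $\Id_{|\mathcal{T}|}$ by the previous paragraph.
\end{itemize}
Hence $p$ is a $G$-homotopy equivalence, and $\mathcal{T}$ is $G$-contractible.

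An alternative route is Corollary \ref{C:cor_equi_white}. For each subgroup $H<G$, the fixed set $|\mathcal{T}|^H$ is the realization of the subposet $\mathcal{T}^H$, which contains $t_0$ as its smallest element, so it is contractible by the non-equivariant case. The direct argument above avoids having to identify $|\mathcal{T}|^H$ with $|\mathcal{T}^H|$, so we use it instead.

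The only subtle point is that Lemma \ref{L:quillen1.3} requires both maps to be $G$-maps. For the identity this is automatic; for the constant map it is exactly the hypothesis that the extremal element is $G$-fixed. This hypothesis is also necessary, in line with the remark following Definition \ref{D:homotopy_equi}, which shows that a $G$-contractible space must have a $G$-fixed point.
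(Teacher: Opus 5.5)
Your proof is correct and follows the paper's: you compare the constant map at the $G$-fixed extremal element with the identity and apply Lemma \ref{L:quillen1.3}. Your explicit check of the composites $p\circ s$ and $s\circ p$ writes out the step from ``the identity is $G$-homotopic to a constant $G$-map'' to Definition \ref{D:homotopy_equi}(iii), which the paper leaves implicit.
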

	\begin{proof}
	Let $x_0 \in \mathcal{T}$ denote the smallest element, which is also $G$-invariant. Define maps $f,g:\cT\to \cT$ by $f(x)=x_0$ and $g(x)=x$ for all $x\in\cT$.	
	By Lemma \ref{L:quillen1.3}, $g=Id_\mathcal{T}$ is $G$-homotopic to the constant map $f$, and hence $\mathcal{T}$ is $G$-contractible. The proof for $x_0$ being the largest element is also done similarly. 
	\end{proof}
	
	\begin{cor}(\cite[Cor.1.2]{thevenaz})\label{C:quillen1.3_2}
		Let $\mathcal{T}$ be a $G$-poset together with a map $\phi:\mathcal{T} \rightarrow \mathcal{T}$ such that $\phi(x) \geq x$ for all $x \in \mathcal{T}$. The $\phi:\mathcal{T} \rightarrow \im(\phi)$ is a $G$-homotopy equivalence. 
	\end{cor}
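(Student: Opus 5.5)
We will take $\phi$ to be a map of $G$-posets, i.e.\ $G$-equivariant and order preserving. The statement only says ``a map'', but Lemma \ref{L:quillen1.3} needs maps of $G$-posets, so this is the natural reading and we assume it. The plan is to exhibit an explicit $G$-homotopy inverse, namely the inclusion $i:\im(\phi)\hookrightarrow\mathcal{T}$, and to verify both composites using Lemma \ref{L:quillen1.3} alone, with no appeal to Whitehead's theorem.

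First we check that $\im(\phi)$, with the partial order induced from $\mathcal{T}$, is a $G$-poset. It is $G$-stable because $g\phi(x)=\phi(gx)$. The $G$-action on it is compatible with the order because this holds in $\mathcal{T}$. Consequently $\phi:\mathcal{T}\to\im(\phi)$ and $i:\im(\phi)\to\mathcal{T}$ are both maps of $G$-posets. By Example \ref{E:poset-simplicial}(1) and Lemma \ref{L:simpl_cw}, they induce $G$-maps between the geometric realizations.

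Next we treat the composite $i\circ\phi:\mathcal{T}\to\mathcal{T}$. By hypothesis $\Id_{\mathcal{T}}(x)=x\le\phi(x)=(i\circ\phi)(x)$ for all $x\in\mathcal{T}$. Lemma \ref{L:quillen1.3}, applied to the pair $f=\Id_{\mathcal{T}}$ and $g=i\circ\phi$, then shows that $|i\circ\phi|$ is $G$-homotopic to $\Id_{|\mathcal{T}|}$.

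Then we treat $\phi\circ i:\im(\phi)\to\im(\phi)$. For $y\in\im(\phi)$ we have $y\le\phi(y)$, and $\phi(y)$ again lies in $\im(\phi)$. Hence $\Id_{\im(\phi)}\le\phi\circ i$ pointwise as maps of $G$-posets $\im(\phi)\to\im(\phi)$. Lemma \ref{L:quillen1.3} gives a $G$-homotopy $|\phi\circ i|\simeq\Id_{|\im(\phi)|}$.

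Together the two homotopies show that $\phi$ is a $G$-homotopy equivalence in the sense of Definition \ref{D:homotopy_equi}(ii), with $G$-homotopy inverse $i$. There is no real obstacle here. The only point needing care is the second composite: there the comparison must take place inside $\im(\phi)$ rather than in $\mathcal{T}$. This works precisely because $\phi(y)\in\im(\phi)$, so the whole homotopy provided by Lemma \ref{L:quillen1.3} stays inside $|\im(\phi)|$.
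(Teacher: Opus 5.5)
Your proof is correct and is the argument the paper intends: the paper gives no separate proof, but cites Th\'evenaz and presents the statement as a direct consequence of Lemma~\ref{L:quillen1.3}, which amounts to your two pointwise comparisons $\Id_{\mathcal{T}}\le i\circ\phi$ and $\Id_{\im(\phi)}\le\phi\circ i$, with the inclusion $i$ as $G$-homotopy inverse. You were also right to read $\phi$ as an order-preserving $G$-map; this is how it is used in Section~4, where $\phi$ sends $W'$ to $W'+W^{\Gamma_\gamma}$.
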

	
	Given a $G$-poset $\mathcal{T}$, for $\sigma \in \mathcal{T}$, we denote by $\mathcal{T}_{\leq \sigma}$(resp. $\mathcal{T}_{\geq \sigma}$) the set \[\{t \in \mathcal{T}|t \leq \sigma\}(\text{resp. }  \{t \in \mathcal{T}|t \geq \sigma\}).\] 
	For any $G$-set $X$ and $x \in X$, the stabilizer subgroup $G_x \subset G$ is defined as the subgroup of all $g \in G$ such that $gx=x$. We have the following $G$-equivariant version of \cite[Prop.1.6]{quillen}
	
	\begin{prop}(\cite[Thm.1]{thevenaz})\label{P:thevenaz}
		Let $\mathcal{U},\mathcal{T}$ be two $G$-posets and $\phi:\mathcal{U} \rightarrow \mathcal{T}$ a map of $G$-posets such that, for all $\sigma \in \mathcal{T}$, $\phi^{-1}(\mathcal{T}_{\leq \sigma})$ is $G_\sigma$-contractible(that is, its geometric realization). Then $\phi$ is a $G$-homotopy equivalence(that is, on their realizations).
		The analogous statement holds true with $\mathcal{T}_{\leq \sigma}$ replaced by $\mathcal{T}_{\geq \sigma}$. 
	\end{prop}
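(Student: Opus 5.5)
The plan is to reduce the statement to the equivariant Whitehead theorem (Theorem \ref{T:equi_whitehead}) applied to $|\phi|$, and then to verify the fixed-point hypothesis with Quillen's non-equivariant Fiber Lemma \cite[Prop.1.6]{quillen}. Concretely, I will show that for every subgroup $H<G$ the restriction
\[\phi^H:\mathcal{U}^H\to\mathcal{T}^H\]
is a (non-equivariant) homotopy equivalence.

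First I identify fixed points combinatorially. For a $G$-poset $\mathcal{T}$, viewed as a $G$-simplicial complex via Example \ref{E:poset-simplicial}(1), the fixed set satisfies $|\mathcal{T}|^H=|\mathcal{T}^H|$, where $\mathcal{T}^H$ is the subposet of $H$-fixed elements. A point in the open simplex of a chain $x_0<\dots<x_d$ is fixed by $h$ only if $h$ stabilizes the chain. Since $h$ preserves the order and the chain is totally ordered and finite, $h$ then fixes each $x_i$. Hence the fixed points are exactly the realizations of chains in $\mathcal{T}^H$. By Lemma \ref{L:simpl_cw}, $|\mathcal{U}|$ and $|\mathcal{T}|$ are $G$-CW complexes. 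So it suffices to show that each $|\phi^H|:|\mathcal{U}^H|\to|\mathcal{T}^H|$ induces isomorphisms on all $\pi_i$ at all base points, and a homotopy equivalence gives this.

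Next I apply Quillen's Fiber Lemma to $\phi^H$. For $\sigma\in\mathcal{T}^H$ the relevant fiber is
\[(\phi^H)^{-1}\bigl((\mathcal{T}^H)_{\leq\sigma}\bigr)=\phi^{-1}(\mathcal{T}_{\leq\sigma})^H .\]
Since $\sigma$ is $H$-fixed, $H\subset G_\sigma$. By hypothesis $\phi^{-1}(\mathcal{T}_{\leq\sigma})$ is $G_\sigma$-contractible, so it is also $H$-contractible: restrict the $G_\sigma$-homotopies to $H$. An $H$-contraction, meaning $H$-maps $Y\to\ast\to Y$ with an $H$-homotopy $\Id\simeq$ const, restricts to $Y^H$ and contracts it. In particular $Y^H\ne\emptyset$, because the image of $\ast$ is $H$-fixed. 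So every fiber of $\phi^H$ over $(\mathcal{T}^H)_{\le\sigma}$ is contractible, and Quillen's Fiber Lemma makes $\phi^H$ a homotopy equivalence. Theorem \ref{T:equi_whitehead} then gives that $|\phi|$ is a $G$-homotopy equivalence. The $\mathcal{T}_{\geq\sigma}$ variant follows by the same argument, or by passing to opposite posets, which have the same realizations.

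The main obstacle I expect is the bookkeeping in the fixed-point identification $|\mathcal{T}|^H=|\mathcal{T}^H|$. This step needs the order-preserving action, so that no element of $H$ permutes the vertices of a simplex nontrivially. I also have to check that the fiber taken in $\mathcal{T}^H$ coincides with the $H$-fixed part of the fiber in $\mathcal{T}$; this is immediate once the identification is in place. The remaining steps are formal.
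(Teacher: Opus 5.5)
Your proof is correct and is essentially the argument of \cite{thevenaz} that the paper defers to. You pass to $H$-fixed points using $|\mathcal{T}|^H=|\mathcal{T}^H|$, identify $\phi^{-1}(\mathcal{T}_{\leq\sigma})^H$ with the Quillen fiber of $\phi^H$ over $\sigma$, get its contractibility from $H\le G_\sigma$, and conclude with the equivariant Whitehead theorem (Theorem \ref{T:equi_whitehead}) in the weaker form ``$\phi^H$ a homotopy equivalence for all $H$'' that the paper alludes to.
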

It is worth noting that the strategy of proof of Proposition~\ref{P:thevenaz}, uses a weaker form of Theorem \ref{T:equi_whitehead}. 	
	
	We would like to also rewrite \cite[Cor.1.8]{quillen} in $G$-equivariant terms. In the following, given a $G$-poset $\mathcal{T}$, by a $G$-closed subset $S$ of $\mathcal{T}$, we mean a $G$-subposet that is closed under \textit{specialization}, that is $x' \leq x \in S \implies x' \in S$.
	
	\begin{prop}(cf. \cite[Prop.1.7]{quillen})\label{P:quillen_1}
		Let $\mathcal{U},\mathcal{T}$ be two $G$-posets and $\mathcal{Z} \subset \mathcal{U} \times \mathcal{T}$ a $G$-closed subset. Let $p_1:\mathcal{Z} \rightarrow \mathcal{U}$ denote the first projection. If the fiber $\mathcal{Z}_u:=\{t \in \mathcal{T}|(u,t) \in \mathcal{Z}\}$, is $G_u$-contractible for all $u \in \mathcal{U}$, then $p_1$ is a $G$-homotopy equivalence. 
	\end{prop}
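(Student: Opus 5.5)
We deduce Proposition~\ref{P:quillen_1} from Proposition~\ref{P:thevenaz}, as Quillen deduces \cite[Prop.1.7]{quillen} from \cite[Prop.1.6]{quillen}. The projection $p_1:\mathcal{Z}\to\mathcal{U}$ is a map of $G$-posets: it is order preserving for the product order, and it is $G$-equivariant because $\mathcal{Z}$ is a $G$-subposet of $\mathcal{U}\times\mathcal{T}$ with the diagonal action. It therefore suffices to check the fiber condition of Proposition~\ref{P:thevenaz} in its ``$\geq$'' form. That is, for each $u\in\mathcal{U}$ we must show that
\[
p_1^{-1}(\mathcal{U}_{\geq u})=\{(u',t)\in\mathcal{Z}\mid u'\geq u\}
\]
is $G_u$-contractible. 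Note that $G_u$ preserves this subposet, since $g(u',t)=(gu',gt)$ and $gu'\geq gu=u$.

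Fix $u$ and set $F:=p_1^{-1}(\mathcal{U}_{\geq u})$. Regard the fiber $\mathcal{Z}_u$ as the subposet $\{u\}\times\mathcal{Z}_u\subset F$; it is $G_u$-stable. Define $r:F\to F$ by $r(u',t)=(u,t)$. This is well defined: $(u,t)\leq(u',t)\in\mathcal{Z}$, and $\mathcal{Z}$ is closed under specialization, so $(u,t)\in\mathcal{Z}$ and hence $(u,t)\in F$. The map $r$ is order preserving and $G_u$-equivariant, since $g(u,t)=(u,gt)$ for $g\in G_u$. It satisfies $r(x)\leq x$ for all $x\in F$, and its image is exactly $\{u\}\times\mathcal{Z}_u$.

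We then apply Corollary~\ref{C:quillen1.3_2} in its dual form, with $\phi(x)\leq x$ in place of $\phi(x)\geq x$. The proof is the same: Lemma~\ref{L:quillen1.3} gives a $G_u$-homotopy between $\iota\circ r$ and $\Id_F$, where $\iota$ is the inclusion, while $r\circ\iota=\Id$ on the image. Hence $r:F\to\{u\}\times\mathcal{Z}_u\cong\mathcal{Z}_u$ is a $G_u$-homotopy equivalence. By hypothesis $\mathcal{Z}_u$ is $G_u$-contractible, so $F$ is $G_u$-contractible. Proposition~\ref{P:thevenaz} then shows that $p_1$ is a $G$-homotopy equivalence.

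The only delicate point is getting the direction right. Closedness of $\mathcal{Z}$ under specialization lets us push $(u',t)$ \emph{down} to $(u,t)$, which is why the relevant fibers are over $\mathcal{U}_{\geq u}$ rather than $\mathcal{U}_{\leq u}$. One must therefore use the ``$\geq$'' variant of Proposition~\ref{P:thevenaz}, which the statement of that proposition explicitly allows. Beyond this, one only needs to confirm that every map and homotopy used is equivariant for the stabilizer $G_u$, not for all of $G$; this is exactly the hypothesis Proposition~\ref{P:thevenaz} requires.
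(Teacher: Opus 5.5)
Your proposal is correct and is essentially the paper's proof. The paper also applies the ``$\geq$'' form of Proposition~\ref{P:thevenaz} to $p_1$, and it identifies $p_1^{-1}(\mathcal{U}_{\geq u})$ with $\mathcal{Z}_u$ up to $G_u$-homotopy via $t\mapsto(u,t)$ and $(u',t)\mapsto t$. The composite of these two maps is exactly your retraction $r\leq \Id$, and Lemma~\ref{L:quillen1.3} supplies the $G_u$-homotopy there too.
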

	\begin{proof}
		We apply Proposition \ref{P:thevenaz} to the map $p_1$, which is a map of $G$-posets. Consider the two maps of posets \[f:\mathcal{Z}_u \rightarrow p_1^{-1}(\mathcal{U}_{\geq u}),\ g:p_1^{-1}(\mathcal{U}_{\geq u})\rightarrow \mathcal{Z}_u,\] given by $f(t):=(u,t)$ and $g(u',t):=t$. Note that $g$ is well defined since $(u',t) \in p_1^{-1}(\mathcal{U}_{\geq u})$ implies that $(u,t) \leq (u',t)$, and since $\mathcal{Z}$ is closed, $(u,t) \in \mathcal{Z}_u$. $f,g$ are $G_u$-equivariant, and one has $g\circ f(t)=t$ and $f\circ g(u',t) \leq (u',t)$. Hence by Lemma \ref{L:quillen1.3}, $f$ and $g$ are $G_u$-homotopy equivalences and hence by hypothesis on $\mathcal{Z}_u$, $p_1^{-1}(\mathcal{U}_{\geq u})$ is $G_u$-contractible. Applying Proposition \ref{P:thevenaz} finishes the proof.
	\end{proof}

	\begin{prop}(\cite[Lemma 1.9]{grayson})\label{T:lem1.9}
		Let $X$ be a $G$-simplicial complex, $\mathcal{T}$ a $G$-poset and for each $\sigma \in \mathcal{T}$, $X_\sigma \subset X$ be a subcomplex. Suppose the following conditions holds true:-
		\begin{itemize}
			\item[(i)] $\sigma \leq \tau$ $\implies \ X_\sigma \supset X_\tau$;
			\item[(ii)] every simplex of $X$ is a simplex of $X_\sigma$ for some $\sigma \in \mathcal{T}$ i.e. $X=\underset{\sigma \in \mathcal{T} }{\bigcup} X_\sigma$;
			\item[(iii)] each $X_\sigma$ is contractible;
			\item[(iv)] for each simplex $\gamma$ in $X$, the poset $\mathcal{T}_\gamma:=\{\sigma \in \mathcal{T}|\ \gamma \text{ is a simplex of } X_\sigma\}$ is $G_\gamma$-contractible. 
		\end{itemize}
		Then there is a natural $G$-equivariant homotopy equivalence between $|X|$ and $|\mathcal{T}|$. Moreover, if (iii) is strengthened to each $X_\sigma$ being $G_\sigma$-contractible, then $|X|$ and $|\mathcal{T}|$ are naturally $G$-homotopy equivalent.  
	\end{prop}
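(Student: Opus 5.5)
The plan is to follow Quillen's fiber argument as in the proof of \cite[Lemma 1.9]{grayson}, now carried out with the equivariant tools above. Let $\mathcal{S}(X)$ denote the $G$-poset of simplices of $X$, ordered by inclusion. Its realization $|\mathcal{S}(X)|$ is the barycentric subdivision of $|X|$, and the identification is canonically $G$-homeomorphic. Define
\[
\mathcal{Z}:=\{(\gamma,\sigma)\in \mathcal{S}(X)\times \mathcal{T}^{op} \mid \gamma \text{ is a simplex of } X_\sigma\}.
\]
This is a $G$-subposet, since $gX_\sigma=X_{g\sigma}$. Here I assume this equivariance of the family $\sigma\mapsto X_\sigma$, which is implicit in the statement; without it the $G$-action on $\mathcal{Z}$ is not even defined. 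We then get the two projections $p_1:\mathcal{Z}\to\mathcal{S}(X)$ and $p_2:\mathcal{Z}\to\mathcal{T}^{op}$. The realization of $\mathcal{T}^{op}$ equals that of $\mathcal{T}$. The goal is to show that both projections are homotopy equivalences, with $p_1$ a $G$-homotopy equivalence. Composing $p_2$ with a $G$-homotopy inverse of $p_1$ then gives the natural $G$-map $|X|\to|\mathcal{T}|$.

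First I check that $\mathcal{Z}$ is closed under specialization in $\mathcal{S}(X)\times\mathcal{T}^{op}$. Suppose $(\gamma',\sigma')\le(\gamma,\sigma)$, so $\gamma'\subset\gamma$ and $\sigma'\ge\sigma$ in $\mathcal{T}$. By (i) we have $X_{\sigma'}\subset X_\sigma$. So I should instead order the second factor so that closedness works. I will use $\mathcal{T}$ itself with the relation $(\gamma',\sigma')\le(\gamma,\sigma)$ iff $\gamma'\subset\gamma$ and $\sigma'\le\sigma$. Then $\gamma'\subset\gamma\in X_\sigma\subset X_{\sigma'}$, and closedness follows because $X_{\sigma'}$ is a subcomplex.

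\textbf{Fibers of $p_1$.} The fiber over $\gamma$ is exactly $\mathcal{T}_\gamma$. By (iv) it is $G_\gamma$-contractible, and (ii) guarantees it is nonempty. Proposition \ref{P:quillen_1} then shows that $p_1$ is a $G$-homotopy equivalence.

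\textbf{Fibers of $p_2$.} For $p_2$ I use the second version of Proposition \ref{P:thevenaz}, with $\mathcal{T}_{\ge\sigma}$. The relevant fiber is $p_2^{-1}(\mathcal{T}_{\ge\sigma})=\{(\gamma,\tau)\mid \tau\ge\sigma,\ \gamma\in X_\tau\}$. The maps $(\gamma,\tau)\mapsto\gamma$ and $\gamma\mapsto(\gamma,\sigma)$ are $G_\sigma$-equivariant. The first lands in $\mathcal{S}(X_\sigma)$ because $X_\tau\subset X_\sigma$, and the second lands in the fiber. Their composite on the fiber satisfies $(\gamma,\sigma)\le(\gamma,\tau)$, so Lemma \ref{L:quillen1.3} identifies the fiber, $G_\sigma$-equivariantly, with the barycentric subdivision of $X_\sigma$.

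Under the strengthened hypothesis, $X_\sigma$ is $G_\sigma$-contractible, so every fiber is $G_\sigma$-contractible. Proposition \ref{P:thevenaz} then makes $p_2$ a $G$-homotopy equivalence, and the composite is the asserted $G$-homotopy equivalence.

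\textbf{Main obstacle.} Under only (iii), Proposition \ref{P:thevenaz} is unavailable for $p_2$. I would apply its non-equivariant case ($G=1$, Remark \ref{R:non-equi-whitehead}), which gives that $p_2$ is an ordinary homotopy equivalence and still a $G$-map. The composite $|X|\simeq|\mathcal{S}(X)|\xrightarrow{\ p_1^{-1}\ }|\mathcal{Z}|\xrightarrow{\ p_2\ }|\mathcal{T}|$ is then a $G$-map, up to the $G$-homotopy inverse of $p_1$, and it is a homotopy equivalence. That is precisely a $G$-equivariant homotopy equivalence in the sense of Definition \ref{D:homotopy_equi}(ii). The delicate points are keeping track of the order convention on $\mathcal{T}$, so that $\mathcal{Z}$ is genuinely closed, and justifying that the barycentric subdivision identification is $G$-equivariant.
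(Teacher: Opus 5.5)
Your proposal is correct and follows essentially the same route as the paper. Both arguments pass to the barycentric subdivision and form the closed $G$-subposet $Z=\{(\gamma,\sigma)\mid \gamma\in X_\sigma\}\subset \mathcal{S}(X)\times\mathcal{T}$. They make $p_1$ a $G$-homotopy equivalence via Proposition~\ref{P:quillen_1} and (iv), and handle $p_2$ through its fibres $X_\sigma$ (non-equivariantly under (iii), $G_\sigma$-equivariantly under the strengthened hypothesis); the only difference is cosmetic: you re-derive the fibre identification for $p_2$ from Proposition~\ref{P:thevenaz} and Lemma~\ref{L:quillen1.3}, whereas the paper quotes Proposition~\ref{P:quillen_1} with the factors swapped. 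Your remark that the construction needs the tacit hypothesis $gX_\sigma=X_{g\sigma}$ is well taken, since the paper uses it implicitly when it calls $Z$ a $G$-subposet.
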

	
	\begin{proof}
		Since a $G$-simplicial complex and its barycentric subdivision are naturally $G$-homoto\-py equivalent to each other, without loss of generality, we can replace $X$ and the $X_\sigma$ by their barycentric subdivision. Hence we may assume $X$ is a poset and each of the $X_\sigma$ is a closed subset of $X$, and condition (iv) can be reformulated as the poset $\mathcal{T}_x:=\{\sigma \in \mathcal{T}| x \in \mathcal{T}_\sigma\}$ being $G_x$-contractible, for all $x \in X$. Consequently, we can form the closed $G$-subposet $Z \subset X \times \mathcal{T}$, defined by \[Z:=\{(x,\sigma)|x \in X_\sigma\}.\] The projection $p_1:Z \rightarrow X$ is a $G$-equivariant map and its fiber at $x \in X$ is precisely $\mathcal{T}_x$, which by hypothesis is $G_x$-contractible. Hence by Proposition \ref{P:quillen_1}, $p_1$ is a $G$-homotopy equivalence. Also since the fiber of the second projection $p_2:Z \rightarrow \mathcal{T}$ at $\sigma \in \mathcal{T}$ is $X_\sigma$, which is contractible, from the non-equivariant version of Proposition \ref{P:quillen_1}(which is \cite[Prop.1.7]{quillen}), we know that $p_2$ is a $G$-equivariant homotopy equivalence. If we denote by $p'_1$ the $G$-homotopy inverse of $p_1$ obtained by the same proposition, then $X \xrightarrow{p_1'} Z \xrightarrow{p_2} \mathcal{T}$ induces a $G$-equivariant homotopy equivalence between $X$ and $\mathcal{T}$.  
		
		Moreover, when each $X_\sigma$ is also $G_\sigma$-contractible, then $p_2$ is also a $G$-homotopy equivalence, and hence in this case $|X|$ and $|\mathcal{T}|$ become $G$-homotopy equivalent. 
	\end{proof}
	
	\begin{rem}
		Note that, in all of the ``$G$-equivariant" results stated in this section, one obtains a ``non-equivariant" version by choosing $G=1$. Later on, this is what we will mean by a non-equivariant version of any of the above results in this subsection. 
	\end{rem}

	\section{Bruhat-Tits building}
		
	We briefly recall the notation from the introduction. Let $K$ denote the function field of a geometrically irreducible curve $C$ over a finite field $\FF_q$ and let $\infty$ be a fixed closed point on $C$. Define $A \subset K$ as the subset of those functions that are regular outside $\infty$, and let $R$ be the discrete valuation ring $\mathcal{O}_{C,\infty}$ with $\varpi$ a choice of uniformizer of $R$. In this section, we recap the basic facts about the rank $(r-1)$ Bruhat-Tits building for $\GL_r(R)$. 
	We first describe it as a simplicial complex of homothety classes of $R=\mathcal{O}_{C,\infty}$-lattices inside a fixed rank $r$ $K$-vector space $W$. In a later subsection we provide an alternate description of this building as classes of rank $r$ vector bundles on the curve $C$ whose restriction to $\Spec(A)$ is a fixed projective $A$-module $P$, following \cite[\S4]{grayson}. We will see there how these two descriptions are equivalent to the classical way of describing the Bruhat-Tits building for $\GL_{r}(K_\infty)$. Also part of this section, is a discussion on the stabilizers of simplices with respect to $\Aut_A(P)$ and its principal congruence subgroups.    
	
	\begin{defi}
		Define the poset \[\mathbb{B}(W):=\{L \subset W \text{ is an }R\text{-submodule of full rank }r\},\]with inclusion as the partial order. $\ZZ$ acts cofinally on $\mathbb{B}(W)$ via \[n+L:=\varpi^{-n}L;\ n \in \ZZ,L \in \mathbb{B}(W).\] Then \[\mathcal{B}_\bullet(W):=\langle \mathbb{B}(W)\rangle.\](See \eqref{E:poset-simplicial}(2) for the definition of $\langle \mathbb{E}\rangle$ for a $\mathbb{Z}$-poset $\mathbb{E}$) As before, we denote by $\mathcal{B}_i(W)$ the set of $i$ simplices. The complex $\mathcal{B}_\bullet(W)$ is called the \textit{Bruhat-Tits building of rank $r$}. 
	\end{defi}
	
	\begin{rem}
		 The vertex set $\mathcal{B}_0(W)$ consists of the homothety classes of $R-$lattices $L \subset W$ of rank $r$, and are denoted by $\langle L \rangle$. Similarly, $\{ \langle L_0\rangle,\ldots,\langle L_{d}\rangle\} \in \mathcal{B}_d(W)$ is a $d$-simplex if the union of the classes forms a chain. Equivalently, the $L_i$'s can be ordered and scaled so that $L_0\supsetneq L_1\supsetneq\ldots\supsetneq L_d\supsetneq \varpi L_0$.
	\end{rem}
	
	\subsection{Congruence subgroups}
	
	Here we follow the treatment from \cite[\S 3.4]{gebhard1}. Note that the notion of congruence subgroup is developed there in the rank 2 case but many results easily generalize to higher rank. Fix a $K$-basis $\{e_i\}^r_{i=1}$ of $W$, and let $\GL_r(A)$ be the group of $A$-automorphisms of $
	\bigoplus_{1 \leq i \leq r} Ae_i$.

	\begin{defi}
		A subgroup $\Gamma \leq GL(W)$ is called a \textbf{congruence subgroup} if there is a non zero ideal $I \subset A$ such that $\Gamma \supset \Gamma(I):=\Ker(\GL_r(A) \rightarrow \GL_r(A/I))$ is of finite index.
	\end{defi}
	
	Recall that two subgroups $G',G'' \leq G$ are said to be  \textit{commensurable} if $G' \cap G''$ is a finite index subgroup inside both $G'$ and $G''$. 
	
	\begin{lem}\label{L:commensurable}
		Let $\Gamma',\Gamma'' \subset \GL(W)$ be two congruence subgroups. Then $\Gamma'$ and $\Gamma''$ are commensurable.     
	\end{lem}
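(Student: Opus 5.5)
The plan is to reduce to the basic congruence subgroups $\Gamma(I)$ and use transitivity of commensurability. By definition there are non-zero ideals $I',I''\subset A$ such that $\Gamma(I')\subset\Gamma'$ and $\Gamma(I'')\subset\Gamma''$ are subgroups of finite index. Put $J:=I'I''$, or alternatively $I'\cap I''$; both are non-zero ideals, because $A$ is a domain. Since $J\subset I'$, the map $\GL_r(A)\to\GL_r(A/I')$ factors through $\GL_r(A/J)$, and hence $\Gamma(J)\subset\Gamma(I')$; likewise $\Gamma(J)\subset\Gamma(I'')$.

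The key point is that $\Gamma(J)$ has finite index in $\GL_r(A)$. It is the kernel of a homomorphism into $\GL_r(A/J)$, and $A/J$ is finite: $A$ is a Dedekind domain whose residue fields at maximal ideals are finite extensions of $\FF_q$, and $J$ is a finite product of powers of maximal ideals. Consequently $\Gamma(J)$ has finite index in every intermediate group, in particular in $\Gamma(I')$ and $\Gamma(I'')$. Combining with the hypothesis that $\Gamma(I')$ has finite index in $\Gamma'$, we get
\[
[\Gamma':\Gamma(J)]=[\Gamma':\Gamma(I')]\,[\Gamma(I'):\Gamma(J)]<\infty ,
\]
and similarly $[\Gamma'':\Gamma(J)]<\infty$.

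Finally, $\Gamma(J)\subset\Gamma'\cap\Gamma''\subset\Gamma'$, so $[\Gamma':\Gamma'\cap\Gamma'']$ divides, or at least is bounded by, $[\Gamma':\Gamma(J)]$ and is therefore finite. The same argument gives $[\Gamma'':\Gamma'\cap\Gamma'']<\infty$, which is commensurability.

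There is no real obstacle here. The only point needing care is the finiteness of $A/J$, which relies on $J\neq0$ and on the arithmetic of $A$. If one wants the definition to be independent of the chosen basis $\{e_i\}$, this is a separate matter: a different basis gives a conjugate of $\GL_r(A)$ by an element of $\GL_r(K)$, and one then needs the standard fact that such a conjugate contains some $\Gamma(J')$. That fact follows by clearing denominators of the conjugating matrix and its inverse. The lemma as stated uses the fixed basis, so this is not required.
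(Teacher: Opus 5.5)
Your proposal is correct and matches the paper's proof: both pass to the common subgroup $\Gamma(I'I'')$, show it has finite index in $\Gamma'$ and $\Gamma''$, and conclude that $\Gamma'\cap\Gamma''$ has finite index in each. You also justify, via the finiteness of $A/I'I''$, why $\Gamma(I'I'')$ has finite index in $\Gamma(I')$ and $\Gamma(I'')$, a step the paper leaves implicit.
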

	
	\begin{proof}
		Suppose $\Gamma'\supset \Gamma(I)$ and $\Gamma'' \supset \Gamma(J)$ are of finite index. Then both $\Gamma',\Gamma''$ contain $\Gamma(IJ)$ as a common finite index subgroup. Consequently, also $\Gamma' \cap \Gamma''\supset\Gamma(IJ)$ is a finite index subgroup of both $\Gamma'$ and $\Gamma''$. 
	\end{proof}
	
	We also give below an equivalent definition of a congruence subgroup. This is the definition used in \cite[V.2]{Gekeler} for an arithmetic group. 
	
	\begin{lem}\label{L:ari_grp}
		Let $\Gamma \leq \GL(W)$ be a subgroup. Then $\Gamma$ is a congruence subgroup if and only if there exist a projective $A$-module $\Lambda \subset W$ of rank $r$ and an ideal $I \subset A$ such that $\Aut_A(\Lambda,I):=\Ker(\Aut_A(\Lambda) \rightarrow \Aut_A(\Lambda/I\Lambda)) \subset \Gamma \subset \Aut_A(\Lambda)$.
	\end{lem}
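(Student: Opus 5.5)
We prove the two implications separately. Throughout, $\Gamma(I)=\Aut_A(\Lambda_0,I)$ with $\Lambda_0:=\bigoplus_{i} Ae_i$. The key input is that any two projective $A$-lattices of full rank in $W$ are commensurable up to a nonzero scalar in $A$.

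\emph{($\Leftarrow$).} Suppose $\Aut_A(\Lambda,I)\subset\Gamma\subset\Aut_A(\Lambda)$ with $\Lambda\subset W$ projective of rank $r$.

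First, $\Lambda$ and $\Lambda_0$ both span $W$ and are finitely generated. Hence there is a nonzero $a\in A$ with $a\Lambda_0\subset\Lambda$ and $a\Lambda\subset\Lambda_0$.

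Put $J:=a^2I$, a nonzero ideal. Let $g\in\Gamma(J)$, so $g-1$ maps $\Lambda_0$ into $J\Lambda_0$. For $\lambda\in\Lambda$ we have $a\lambda\in\Lambda_0$, so
\[(g-1)(a\lambda)\in a^2I\Lambda_0.\]
Dividing by $a$ gives $(g-1)\lambda\in aI\Lambda_0\subset I\Lambda$. Thus $g\Lambda\subset\Lambda$. The same estimate for $g^{-1}\in\Gamma(J)$ gives $g^{-1}\Lambda\subset\Lambda$. Hence $g\in\Aut_A(\Lambda,I)\subset\Gamma$, and so $\Gamma(J)\subset\Gamma$.

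It remains to check that the index is finite. The group $\Aut_A(\Lambda,I)$ has finite index in $\Aut_A(\Lambda)$, because $\Aut_A(\Lambda/I\Lambda)$ is finite ($A/I$ is finite). By the symmetric argument, $\Aut_A(\Lambda,I)\supset\Gamma(J)$ for suitable $J$, and $\Gamma(J)$ has finite index in $\GL_r(A)$. We want $[\Gamma:\Gamma(J)]<\infty$. Since $\Gamma\subset\Aut_A(\Lambda)$, it suffices to show $[\Aut_A(\Lambda):\Aut_A(\Lambda)\cap\Gamma(J)]<\infty$. This holds because $\Aut_A(\Lambda)\cap\Gamma(J)\supset\Aut_A(\Lambda,a^2J)$ by the same computation with the roles of $\Lambda$ and $\Lambda_0$ exchanged, and $\Aut_A(\Lambda,a^2J)$ has finite index in $\Aut_A(\Lambda)$.

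\emph{($\Rightarrow$).} Suppose $\Gamma\supset\Gamma(I)$ with finite index. The candidate is
\[\Lambda:=\sum_{\gamma\in\Gamma}\gamma\Lambda_0,\]
a finite sum over coset representatives of $\Gamma/\Gamma(I)$, since $\Gamma(I)$ stabilises $\Lambda_0$. This $\Lambda$ is a finitely generated torsion-free $A$-module spanning $W$, hence projective of rank $r$ because $A$ is Dedekind. It is $\Gamma$-stable, so $\Gamma\subset\Aut_A(\Lambda)$.

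For the lower bound we use the computation from ($\Leftarrow$) with the roles swapped. It gives a nonzero ideal $I'$ such that $\Aut_A(\Lambda,I')\subset\Gamma(I)\subset\Gamma$, and this is the required ideal.

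The main obstacle is the bookkeeping of the scalar $a$, so that congruence conditions transfer correctly between $\Lambda$ and $\Lambda_0$. Everything else is standard Dedekind-domain structure theory.
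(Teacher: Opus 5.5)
Your proof is correct, and it is the standard lattice-commensurability argument. For ($\Rightarrow$) you take $\Lambda=\sum_{\gamma\in\Gamma/\Gamma(I)}\gamma\Lambda_0$; for both directions you pick $a\neq 0$ with $a\Lambda_0\subset\Lambda$ and $a\Lambda\subset\Lambda_0$, which gives $\Aut_A(\Lambda,a^2I)\subset\Gamma(I)$ and, symmetrically, $\Gamma(a^2I)\subset\Aut_A(\Lambda,I)$. The paper gives no argument of its own beyond citing \cite[Prop.~3.14]{gebhard1} (stated there in rank $2$), so your proof supplies the argument it defers to. The only tacit point is that $I$ must be nonzero, which you use via finiteness of $A/I$; otherwise ($\Leftarrow$) fails, e.g.\ for $\Gamma=\{1\}$.
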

	
	\begin{proof}
		The proof of \cite[Prop. 3.14]{gebhard1} verbatim holds true for arbitrary ranks as well. 
	\end{proof}
	
	To ease notation, we will abbreviate $\Gamma_\Lambda:=\Aut_A(\Lambda)$ and $\Gamma_\Lambda(I):=\Aut_A(\Lambda,I)$.

	Note that $\GL(W)$ acts on $\mathcal{B}_\bullet(W)$ naturally, regarding an $R$-lattice as an $R$-submodule of maximal rank inside $W$. In particular, there is an induced action of $\Gamma_P$ on $\mathcal{B}_\bullet(W)$, where $P$ is a fixed projective submodule of $W$. Also we have the analogue of \cite[Prop. 3.16]{gebhard1}, which says that a congruence subgroup stabilizes a simplex if and only if it stabilizes all the vertices of the simplex.
	
	First we state a result on calculating distances of vertices in $|\mathcal{B}_\bullet(W)|$. Recall that for two vertices $\langle L\rangle,\langle U\rangle$ in $\mathcal{B}_\bullet(W)$, $d(\langle L\rangle,\langle U\rangle)$ is the length of a minimal path $x_0:=\langle L\rangle,x_1:=\langle L_1\rangle,\ldots,x_n:=\langle U\rangle$ between $\langle L\rangle$ and $\langle U\rangle$, such that $\langle L_{i}\rangle$ and $\langle L_{i+1}\rangle$ form an edge in $\mathcal{B}_\bullet(W)$.  
	
	\begin{lem}(\cite[Ch. 3, Prop. 3.4]{de})\label{L:dist}
		Let $L,U$ be two $R$-lattices such that $L \supset U \supset \varpi^r L$ and $\varpi L \not\supset U \not\supset \varpi^{r-1}L$. Then $d(\langle L\rangle,\langle U\rangle)=r$.	\end{lem}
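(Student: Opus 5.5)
Since $d(\langle L\rangle,\langle U\rangle)$ is the combinatorial distance in the $1$-skeleton, we prove two inequalities: we exhibit a path of length $r$, and we show that no shorter path exists. To control paths, we attach to any pair of lattices its elementary divisors. Over the discrete valuation ring $R$ there is a basis $v_1,\dots,v_r$ of $L$ with $U=\bigoplus_i \varpi^{a_i}Rv_i$, where $0\le a_1\le\dots\le a_r$. The hypotheses translate as follows:
\[
U\supset\varpi^rL \;\Longrightarrow\; a_r\le r,\qquad U\not\subset\varpi L \;\Longrightarrow\; a_1=0,\qquad U\not\supset\varpi^{r-1}L \;\Longrightarrow\; a_r=r.
\]
For homothety classes, a natural candidate for the distance is $a_{\max}-a_{\min}$, which is invariant under rescaling $U$. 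With the normalisation above this equals $a_r-a_1=r$, so the lemma reduces to showing $d(\langle L\rangle,\langle U\rangle)=a_r-a_1$.

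\textbf{Upper bound.} Put $L_k:=\bigoplus_i \varpi^{\min(a_i,k)}Rv_i$ for $k=0,\dots,r$. Then $L_0=L$ and $L_r=U$. Moreover $L_k\supsetneq L_{k+1}\supseteq\varpi L_k$, with strict inclusion at the first step because $a_r>k$. If $L_{k+1}=\varpi L_k$ happened, the two classes would coincide and we would simply drop that step. So consecutive classes are equal or adjacent, and the path has length at most $r$.

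\textbf{Lower bound.} Define $\delta(\langle L\rangle,\langle U\rangle):=a_{\max}-a_{\min}$. We show that $\delta$ changes by at most $1$ along an edge. Take an edge $\{\langle U\rangle,\langle U'\rangle\}$, scaled so that $U\supsetneq U'\supsetneq\varpi U$, and normalise $L$ so that $a_{\min}=0$, that is, $U\subset L$ but $U\not\subset\varpi L$. Then $U'\subset L$, and $U'\supset\varpi U\supset\varpi^{a_{\max}+1}L$. Since the minimal $n$ with $\varpi^nL\subset U'$ is $b_{\max}$ and the maximal $m$ with $U'\subset\varpi^mL$ is $b_{\min}$, the elementary divisors $b$ of $U'$ satisfy $b_{\max}\le a_{\max}+1$ and $b_{\min}\ge 0$. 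Hence $\delta(\langle L\rangle,\langle U'\rangle)\le\delta(\langle L\rangle,\langle U\rangle)+1$. By symmetry of the edge relation, $|\delta(\langle L\rangle,\langle U\rangle)-\delta(\langle L\rangle,\langle U'\rangle)|\le1$. Starting from $\delta(\langle L\rangle,\langle L\rangle)=0$, any path from $\langle L\rangle$ to $\langle U\rangle$ therefore has length at least $\delta(\langle L\rangle,\langle U\rangle)=r$.

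The main obstacle is this lower bound, in particular keeping careful track of the two characterisations $b_{\min}=\max\{m: U'\subset\varpi^mL\}$ and $b_{\max}=\min\{n:\varpi^nL\subset U'\}$ together with the homothety normalisation. Once these are written out, the remaining inclusions are routine. The upper bound is elementary, using the path $L_k$.
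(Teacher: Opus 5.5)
Your proof is correct, and since the paper gives no proof of its own here (it only cites \cite[Ch.~3, Prop.~3.4]{de}), it amounts to a self-contained replacement for the citation.

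You use the standard elementary-divisor argument. The normalisation turns the hypotheses into $a_1=0$ and $a_{\max}=r$. The truncated lattices $L_k$ give an explicit path of length $r$, which settles the upper bound. For the lower bound you use the intrinsic descriptions $a_{\min}=\max\{m: U\subset\varpi^m L\}$ and $a_{\max}=\min\{n:\varpi^nL\subset U\}$. They show that $a_{\max}-a_{\min}$ is an invariant of the pair of homothety classes that changes by at most $1$ along an edge. State explicitly that these descriptions are what make $\delta$ independent of the chosen basis and of the representatives of both classes, since you rescale $L$ in the lower-bound step.

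Two small remarks:
\begin{itemize}
\item Your hedge about $L_{k+1}=\varpi L_k$ is unnecessary. Since $a_1=0$, you have $v_1\in L_{k+1}\setminus\varpi L_k$, so every step of your path is a genuine edge.
\item Nothing in your argument uses that the integer $r$ in the statement is the rank of $W$. This works in your favour: in the proof of Proposition~\ref{L:ari_dist} the lemma is applied with distance $a_r-a_1$, an arbitrary integer. Your argument visibly establishes that general version, which the bare citation leaves the reader to match against the reference.
\end{itemize}
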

		
	\begin{prop}\label{L:ari_dist}
		Let $\Gamma \leq \GL(W)$ be a congruence subgroup, $\gamma \in \Gamma\setminus \{1\}$ and $v \in \mathcal{B}_\bullet(W)$ a vertex. Then $\det(\Gamma) \subset \mathbb{F}^{*}_{q}$ is a finite group and $d(v,\gamma v) \neq 1$.  
	\end{prop}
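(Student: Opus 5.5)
First I would show that $\det(\Gamma)$ lies in $A^*=\mathbb{F}_q^*$; the inequality $d(v,\gamma v)\neq 1$ then follows from a determinant/index computation.

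\emph{Step 1: the determinant.} By Lemma~\ref{L:ari_grp} there is a projective $A$-module $\Lambda\subset W$ of rank $r$ with $\Gamma\subset\Aut_A(\Lambda)$. For $g\in\Aut_A(\Lambda)$, the exterior power $\wedge^r\Lambda$ is an invertible $A$-module on which $g$ acts by $\det g$. Hence $\det g\in\End_A(\wedge^r\Lambda)=A$, and likewise $\det g^{-1}\in A$, so $\det g\in A^*$. Since $C$ is projective and geometrically irreducible, the only functions regular away from $\infty$ that are units are constants, so $A^*=\mathbb{F}_q^*$. Thus $\det(\Gamma)\subset\mathbb{F}_q^*$ is finite.

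\emph{Step 2: normalised index.} Let $v=\langle L\rangle$. For $R$-lattices $M\subset N$ of full rank, $N/M$ has finite $R$-length. For any $g\in\GL(W)$ with $g$ sending $L$ to $gL$, we have
\[
\operatorname{length}_R(L/(L\cap gL))-\operatorname{length}_R(gL/(L\cap gL))=v_\infty(\det g),
\]
computed by comparing a basis of $L$ with its image. For $g=\gamma\in\Gamma$, Step~1 gives $\det\gamma\in\mathbb{F}_q^*\subset R^*$, so $v_\infty(\det\gamma)=0$. Hence $L$ and $\gamma L$ have the same ``volume''.

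\emph{Step 3: ruling out distance one.} Suppose $d(v,\gamma v)=1$. Then $\langle L\rangle\neq\langle\gamma L\rangle$ are adjacent, so for some $n\in\ZZ$ we have $L\supsetneq\varpi^n\gamma L\supsetneq\varpi L$. Taking lengths, $0<\operatorname{length}(L/\varpi^n\gamma L)<r\cdot\deg(\infty)$, where the total is $\operatorname{length}(L/\varpi L)=r$ measured over $R$. By Step~2, $\operatorname{length}(L/\varpi^n\gamma L)=nr$, since scaling by $\varpi^n$ multiplies the volume index by $r$ per unit of $n$. So $0<nr<r$, which is impossible for integer $n$. Thus $d(v,\gamma v)\neq1$. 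Note that $\gamma\neq1$ is not even needed, because distance $0$ is trivially allowed.

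\emph{Main obstacle.} The only delicate point is the identity in Step~2 relating the length index to $v_\infty(\det)$, including its normalisation when $\deg\infty>1$. Both sides scale consistently, since lengths are taken over $R$ itself, so the argument stays the same. I would verify this via elementary divisors of $\gamma$ relative to an $R$-basis of $L$. Lemma~\ref{L:dist} is not needed for this argument.
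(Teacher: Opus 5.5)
Your proof is correct, but it takes a different route from the paper in both halves.

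\textbf{The determinant.} The paper argues directly from the definition: $\det$ is trivial on the finite-index subgroup $\Gamma(N)$, so $\det(\Gamma)$ is a finite subgroup of $K^*$. It therefore consists of roots of unity, which lie in $\mathbb{F}_q^*$. You instead invoke Lemma~\ref{L:ari_grp} to place $\Gamma$ inside some $\Aut_A(\Lambda)$, and obtain directly $\det(\Gamma)\subset A^*=\mathbb{F}_q^*$. Both arguments ultimately rest on $\mathbb{F}_q$ being algebraically closed in $K$. Yours leans on a lemma whose proof the paper only cites; the paper's needs nothing beyond the definition.

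\textbf{The claim $d(v,\gamma v)\neq 1$.} The paper takes elementary divisors $a_1\le\dots\le a_r$ of $\gamma L$ relative to $L$ and notes $\sum a_i=v_\infty(\det\gamma)=0$. It then uses the distance formula of Lemma~\ref{L:dist} to get $d(v,\gamma v)=a_r-a_1$, which is $0$ or $\ge 2$. You use only the generalized index $\mathrm{length}_R(L/(L\cap gL))-\mathrm{length}_R(gL/(L\cap gL))=v_\infty(\det g)$. With it, an edge $L\supsetneq\varpi^n\gamma L\supsetneq\varpi L$ forces $0<nr<r$.

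This is exactly the type-preservation observation the paper makes later in Remark~\ref{R:type_i_vertex}(i), combined with the fact that distinct vertices of a simplex have distinct types. Your argument is more elementary and self-contained, since it avoids the cited distance lemma. The paper's argument additionally gives the exact value $d(v,\gamma v)=a_r-a_1$, though the statement does not need it.

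\textbf{One slip to fix.} In Step~3 the upper bound should read $\mathrm{length}_R(L/\varpi^n\gamma L)<\mathrm{length}_R(L/\varpi L)=r$, not $r\cdot\deg(\infty)$. The residue field $R/\varpi R$ has $R$-length $1$ whatever $\deg\infty$ is; the factor $\deg\infty$ appears only for $\mathbb{F}_q$-dimensions. Your subsequent inequality $0<nr<r$ already uses the correct bound. It also helps to state explicitly that the second length term vanishes because $\varpi^n\gamma L\subset L$.
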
 
	
	\begin{proof}
		Since $\Gamma \supset \Gamma(N)$ for some $N$, as a finite index subgroup, and $\det(\Gamma(N))=\{1\}$, we have $\det(\Gamma) \subset K^\ast$ is a finite subgroup. But the only finite subgroups of $K^\ast$ are those of $\mathbb{F}^\ast_q \subset K^\ast$, hence the first statement. 
		
		As in \cite[Prop. 3.16]{gebhard1}, we follow the argument leading to corollary in \cite[Pg. 75]{serre}, to prove the second assertion. Suppose $v$ is represented by a lattice $L$. Let $\{e_i\}^r_{i=1}$ be an $R$-basis spanning $L$ such that $\{\pi^{a_i}e_i\}^r_{i=1}$ spans the lattice $L\gamma$, with $a_i \leq a_{i+1}$. So $\gamma=\gamma_1\delta\gamma_2$, where $\delta$ is the diagonal matrix with entries $\pi^{a_i}$ and $\gamma_1,\gamma_2$ are matrices in $GL_r(R)$. Since the determinant of $\gamma$ is a unit, from taking valuations we find that $\sum a_i = 0$. Because the $a_i$ are increasing in $i$, either all $a_i=0$ and $L=L\gamma$, or $a_1\le-1$ and $a_r\ge1$, so that $a_r-a_1\ge2$. Now by Lemma \ref{L:dist}, $d(L,\gamma L)=a_r - a_1$, and the assertion follows.
	\end{proof}
	
	Consequently we have
	
	\begin{cor}\label{C:ari_grp}
		Let $\Gamma$ be as above and $s=(x_0,\ldots,x_d)$ be a simplex in $\cB_\bullet(W)$. If $\gamma \in \Gamma$ stabilizes $s$, then it stabilizes all the vertices of $s$. In particular, $\Gamma_s=\underset{0 \leq i \leq d}{\bigcap} \Gamma_{x_i}$. 
	\end{cor}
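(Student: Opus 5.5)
The plan is to reduce the corollary to the distance statement of Proposition~\ref{L:ari_dist}. Let $\gamma\in\Gamma$ stabilize the simplex $s=\{x_0,\ldots,x_d\}$. Then $\gamma$ permutes the vertex set $\{x_0,\ldots,x_d\}$, since it acts by a simplicial automorphism and $\gamma s=s$. Fix a vertex $x_i$ and suppose $\gamma x_i\neq x_i$. Then $\gamma x_i=x_j$ for some $j\neq i$. Any two distinct vertices of a simplex span an edge (every subset of a simplex is a simplex), so $\{x_i,x_j\}\in\cB_1(W)$ and $d(x_i,\gamma x_i)=1$. This contradicts Proposition~\ref{L:ari_dist}, which gives $d(v,\gamma v)\neq 1$ for every vertex $v$ and every $\gamma\in\Gamma\setminus\{1\}$; note also that $\gamma\neq 1$ automatically here, since $\gamma$ moves $x_i$. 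Hence $\gamma x_i=x_i$ for every $i$.

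For the second assertion, the inclusion $\Gamma_s\subset\bigcap_i\Gamma_{x_i}$ is what was just shown. The reverse inclusion is trivial: an element fixing every vertex of $s$ fixes $s$ as a set.

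The only step needing care is to check that Proposition~\ref{L:ari_dist} applies as stated, i.e.\ that its proof is valid for every vertex. That proof uses $\det\gamma\in\FF_q^*$, hence $\sum a_i=0$, together with Lemma~\ref{L:dist}. I take this as established, so the argument has no real obstacle beyond invoking it.
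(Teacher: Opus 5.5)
Your proof is correct and is the paper's argument: distinct vertices of a simplex are joined by an edge, so $\gamma x_i = x_j$ with $j\neq i$ would give $d(x_i,\gamma x_i)=1$, which Proposition~\ref{L:ari_dist} rules out. Your explicit remarks that $\gamma\neq 1$ whenever it moves a vertex, and that the reverse inclusion $\bigcap_i \Gamma_{x_i}\subset\Gamma_s$ is trivial, just fill in details the paper leaves implicit.
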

	\begin{proof}
		By \ref{L:ari_dist}, $\gamma(x_i) \in s$ and is not equal to any of $x_j$ for $j \neq i$. Hence $\gamma(x_i)=x_i$ for all~$i$. 
	\end{proof}
	
	\subsection{$\mathcal{B}_\bullet(W)$ as a building of vector bundles}
	
	Let $\widetilde{W}$ denote the constant sheaf on $C$ associated to $W$. Fix the ample line bundle $\mathcal{O}(1):=\mathcal{O}(\infty)$, the dual of the vanishing ideal at $\infty$, which is a line bundle. 
	
	Recall that $P$ is a fixed projective $A$-module of rank $r$ inside $W$. We will be concerned with vector bundles whose sections over $\Spec(A)$ are $P$ and whose stalk at $\infty$ varies over rank $r$ lattices, as explained below.
	
	\begin{lem}(cf. \cite[\S2.8]{wiedmann})\label{L:uni_vb}
		Let $L$ be a rank $r$ $R$-lattice inside $W$. Denote by $\widetilde P$ the coherent sheaf over $\Spec (A)$ inside $\widetilde W$ with global sections $P$. Then there exists a unique rank $r$ vector bundle $E$ over $C$ inside $\widetilde W$ such that 
		\begin{align}\label{eq:def-E-P-L} E_{|\Spec(A)}= \widetilde{P}\hbox{ and } E_{\infty}=L
		\end{align}
	\end{lem}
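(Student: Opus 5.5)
The plan is to construct $E$ by gluing, and to get uniqueness from the fact that a subsheaf of a constant sheaf is determined by its stalks. Cover $C$ by the two opens $U_1:=\Spec(A)=C-\infty$ and a small affine neighbourhood $U_2$ of $\infty$. On $U_1$ we are forced to take $E_{|U_1}=\widetilde P$. Near $\infty$ we need a locally free subsheaf of $\widetilde W$ with stalk $L$ at $\infty$. To build it, pick a $K$-basis $w_1,\dots,w_r$ of $W$ which is simultaneously an $R$-basis of $L$; this is possible since $L$ is free of rank $r$ over the DVR $R$ and $L\otimes_R K=W$. Compare this with $P$: choose a basis $p_1,\dots,p_r$ of $W$ lying in $P$. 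The transition matrix between the $w_i$ and the $p_i$ lies in $\GL_r(K)$, so it has only finitely many entries and inverse entries with poles, and $P$ is locally free. Hence there is an affine open $V\subset U_1$, cofinite in $C$, on which both $\widetilde P_{|V}$ and $\bigoplus\mathcal O_V w_i$ equal the same subsheaf $\bigoplus\mathcal O_V p_i$ of $\widetilde W$. Removing the finitely many bad points of $U_1$ gives such a $V$.

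Now set $U_2:=V\cup\{\infty\}$ and shrink further if needed. On $U_2$ define $E_2:=\bigoplus_i\mathcal O_{U_2}w_i\subset\widetilde W_{|U_2}$. This is free of rank $r$ and its stalk at $\infty$ is $\bigoplus Rw_i=L$. On the overlap $U_1\cap U_2=V$ both $\widetilde P$ and $E_2$ coincide as subsheaves of $\widetilde W$. The gluing is therefore literally an equality of subsheaves, with no cocycle to check, and it yields a subsheaf $E\subset\widetilde W$ that is locally free of rank $r$ and satisfies \eqref{eq:def-E-P-L}.

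For uniqueness, suppose $E,E'\subset\widetilde W$ are two vector bundles satisfying \eqref{eq:def-E-P-L}. Then $E$ and $E'$ agree on $U_1$ and have the same stalk $L$ at $\infty$. Both are coherent subsheaves of the constant sheaf $\widetilde W$. Take the finitely many sections $w_i$ generating $L$. They extend to sections of $E$ and of $E'$ over some open neighbourhood of $\infty$, so $E$ and $E'$ agree on such a neighbourhood (after shrinking, via a finite generation argument on both sides). Alternatively, and more cleanly, $E=E'$ as subsheaves of $\widetilde W$ because $E\cap E'$ and $E+E'$ (formed inside $\widetilde W$) are coherent and have the same stalks as $E$ at every point, and coherent sheaves with equal stalks inside a common sheaf are equal.

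I expect the main obstacle to be the bookkeeping of choosing $V$ so that $\widetilde P$ and the free sheaf on the $w_i$ really coincide there. This uses that two full-rank $A$-lattices, or their localizations, agree after inverting finitely many primes, and it is standard. Everything else is formal.
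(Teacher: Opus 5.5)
Your proposal is correct, and for uniqueness your second (``cleaner'') argument is the paper's: two subsheaves of $\widetilde W$ with the same stalks at every point coincide. The paper phrases this as $E/(E\cap E')\simeq(E+E')/E'$ having all stalks zero. No coherence is needed for this, so your first argument via extending the $w_i$ can be dropped.

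For existence you take a different route. The paper defines $E$ canonically as $i_*(\widetilde P)\cap j_*(\widetilde L)\subset\widetilde W$, with $i\colon\Spec(A)\to C$ and $j\colon\Spec(\mathcal O_{C,\infty})\to C$. It then simply asserts that this sheaf is coherent, so that free stalks give local freeness. You instead glue $\widetilde P$ on $\Spec(A)$ with the free sheaf $\bigoplus\mathcal O_{U_2}w_i$ near $\infty$, along a cofinite $V$ on which both equal $\bigoplus\mathcal O_Vp_i$. The right reason for $\widetilde P_{|V}=\bigoplus\mathcal O_Vp_i$ is that $P/\bigoplus Ap_i$ is a finitely generated torsion module, hence supported at finitely many points, as you say at the end. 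Local freeness of $P$ is not the reason.

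The paper's formula buys canonicity: $E$ is visibly choice-free and natural in the pair $(P,L)$, and it gives $H^0(C,E)=P\cap L$ at once. Your gluing gives local freeness of rank $r$ by construction. In fact, the coherence the paper takes for granted near $\infty$ is most easily checked by your observation: on a neighbourhood of $\infty$, $i_*(\widetilde P)\cap j_*(\widetilde L)$ agrees with the $\mathcal O$-span of an $R$-basis of $L$. The dependence of your construction on the choices of the $w_i$ and $p_i$ is removed by the uniqueness statement.
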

	
	\begin{proof}
		Let $i:\Spec(A)=C-\{\infty\} \rightarrow C$ and $j:\Spec(\mathcal{O}_{C,\infty}) = \{\eta,\infty\} \rightarrow C$ be the cannonical morphisms, where $\eta$ is the generic point of $C$. Consider the sheaf $E:=i_*(\widetilde{P}) \cap j_*(\widetilde{L}) \subset \widetilde{W}$. 
		$E$ is a coherent sheaf which is locally free, for all its stalks are free modules over the corresponding local rings. Thus we have a vector bundle satisfying the hypothesis of the lemma. 
		Now let $E,E' \subset \widetilde{W}$ be two vector bundles both satisfying \eqref{eq:def-E-P-L}. 
		Then $E/E \cap E' \simeq (E + E')/E'$, with the right hand side having all its stalks zero. Hence $E \subset E'$. By symmetry of the argument, we get $E=E'$. Hence the uniqueness of such a vector bundle. 
	\end{proof}	

	\begin{defi}
		Let $L \subset W$ be a $R$-module of rank $r$ inside $W$. Denote by $\mathcal{E}(P,L)$ the unique rank $r$ vector bundle on $C$ satisfying the hypothesis Lemma \ref{L:uni_vb}.
	\end{defi}
	\begin{rem}\label{R:pair_asso}
		\begin{itemize}
			\item[(i)] It is easy to deduce from the proof above that $H^0(C,\mathcal{E}(P,L))=P \cap L$, where the intersection is inside $W$.
			\item[(ii)] It is instructive to point out that the association $(P,L) \mapsto \mathcal{E}(P,L)$, from the category of pairs $(P,L)$ as above to the category of rank $r$ vector bundles on $C$, commutes with taking duals and forming tensor products. That is, $\mathcal{E}(P,L)^\vee = \mathcal{E}(P^\vee,L^\vee)$ and $\mathcal{E}(P_1,L_1) \otimes_{\mathcal{O}_C} \mathcal{E}(P_2,L_2)=\mathcal{E}(P_1 \otimes_A P_2,L_1\otimes_R L_2)$.
		\end{itemize}
		
	\end{rem}
	
	\begin{defi}
		
		Define the poset \[\mathbb{E}(P):=\{\mathcal{E}(P,L)|\ L \subset W \text{ is an } R\text{-submodule of rank }r\}.\] $\ZZ$-acts cofinally on it via \[n+\mathcal{E}(P,L):=\mathcal{E}(P,L)(n):=\mathcal{E}(P,L) \otimes \mathcal{O}(1)^{\otimes n}=\mathcal{E}(P,\varpi^{-n}L) ;\ n \in \ZZ,\ \mathcal{E}(P,L) \in \mathbb{E}(P).\] 
		Define the simplicial complex \[\Chi_{\bullet}(P):=\langle \EE(P) \rangle \] and $\Chi_d(P)$ denotes the set of $d$-simplices of $\Chi_\bullet(P)$, for $d \geq 0$. The top dimensional simplices, in $\Chi_{r-1}(P)$, are called \textit{chambers}. 
	\end{defi}
	
	\begin{rem}
		
		Note that the set of vertices $\Chi_0(P)$ is just the $\ZZ$-orbit of vector bundles $\mathcal{E}(P,L)$ and $\{\langle \mathcal{E}_0 \rangle,\ldots,\langle \mathcal{E}_{d} \rangle\} \in \Chi_d(P)$ is a $d$-simplex, if the vertices can ordered and scaled so that $\mathcal{E}_0 \subsetneq \ldots \subsetneq \mathcal{E}_{d} \subsetneq \mathcal{E}_0(1)$. 
		
	\end{rem}

	There is a natural left action of $\Gamma_P$ on $\Chi_\bullet(P)$, defined by $g.\mathcal{E}(P,L):=\mathcal{E}(P,gL)$, for $g \in \Gamma_P$.

	A priori it is not clear whether the building $\Chi_\bullet(P)$ is independent of $P$ as a simplicial complex. The following lemma shows that $\Chi_\bullet(P)$ is in fact isomorphic to the Bruhat-Tits building $\mathcal{B}_\bullet(W)$, as simplicial complexes. 
	
	\begin{lem}\label{L:isom_BT}
		The map $\mathbb{B}(W) \rightarrow \EE(P)$ sending $L \mapsto \mathcal{E}(P,L)$ is bijective and $\ZZ$-equivariant. In particular, it induces an isomorphism of simplicial complexes $\mathcal{B}_{\bullet}(W) \simeq \Chi_{\bullet}(P)$ that is $\Gamma_P$-equivariant.   
	\end{lem}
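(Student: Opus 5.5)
The plan is to verify the four claims in turn: well-definedness, injectivity, surjectivity together with order-preservation in both directions, and compatibility with the $\ZZ$- and $\Gamma_P$-actions. Once these hold, Lemma \ref{L:poset_sim} will give the isomorphism of simplicial complexes.

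\emph{Well-definedness and surjectivity.} Lemma \ref{L:uni_vb} attaches to each $L \in \mathbb{B}(W)$ the vector bundle $\mathcal{E}(P,L)$. By the definition of $\EE(P)$, every element of $\EE(P)$ has this form, so the map is surjective by construction.

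\emph{Injectivity.} The stalk at $\infty$ recovers the lattice, since $\mathcal{E}(P,L)_\infty = L$ by \eqref{eq:def-E-P-L}. Hence $\mathcal{E}(P,L)=\mathcal{E}(P,L')$ as subsheaves of $\widetilde W$ forces $L=L'$.

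\emph{Order in both directions.} Suppose $L\subset L'$. Then on $\Spec(A)$ both bundles restrict to $\widetilde P$, while at $\infty$ we have $L\subset L'$. Using the description $E=i_*\widetilde P\cap j_*\widetilde L$ from the proof of Lemma \ref{L:uni_vb}, this gives $\mathcal{E}(P,L)\subset\mathcal{E}(P,L')$. Conversely, an inclusion of the subsheaves yields an inclusion of their stalks at $\infty$, so $L\subset L'$. The map is therefore an isomorphism of posets.

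\emph{$\ZZ$-equivariance.} This should be the only step needing any care, and I expect it to be routine. We must check that $\mathcal{E}(P,L)\otimes\mathcal{O}(\infty)^{\otimes n}=\mathcal{E}(P,\varpi^{-n}L)$. The line bundle $\mathcal{O}(n\infty)\subset \widetilde K$ is trivial on $\Spec(A)$, and its stalk at $\infty$ is $\varpi^{-n}R$. Consequently the tensor product, viewed inside $\widetilde W$, restricts to $\widetilde P$ on $\Spec(A)$ and has stalk $\varpi^{-n}L$ at $\infty$. The uniqueness part of Lemma \ref{L:uni_vb} then gives the equality; alternatively, one can invoke Remark \ref{R:pair_asso}(ii) with $(A,\varpi^{-n}R)$.

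\emph{$\Gamma_P$-equivariance and conclusion.} For $g\in\Gamma_P$ we have $gP=P$, and $g$ acts on $\widetilde W$. Hence $g\mathcal{E}(P,L)$ restricts to $\widetilde P$ on $\Spec(A)$ and has stalk $gL$ at $\infty$. By uniqueness it equals $\mathcal{E}(P,gL)$, which matches the action defined on $\EE(P)$. The map is thus a $\Gamma_P$-equivariant, $\ZZ$-equivariant, order-preserving bijection whose inverse is also order-preserving. Applying Lemma \ref{L:poset_sim} to the map and to its inverse yields mutually inverse $\Gamma_P$-simplicial maps $\langle\mathbb{B}(W)\rangle\leftrightarrows\langle\EE(P)\rangle$, that is, a $\Gamma_P$-equivariant isomorphism $\mathcal{B}_\bullet(W)\simeq\Chi_\bullet(P)$.
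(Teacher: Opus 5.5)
Your proposal is correct and follows the paper's argument. The paper's proof consists only of the observation $\mathcal{E}(P,L)(-1)=\mathcal{E}(P,\varpi L)$ together with the existence and uniqueness statement of Lemma \ref{L:uni_vb}, and it declares $\Gamma_P$-equivariance clear. You make explicit the steps the paper leaves implicit: injectivity via the stalk at $\infty$, order preservation in both directions, and the passage to the simplicial complexes by applying Lemma \ref{L:poset_sim} to both the map and its inverse.
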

	
	\begin{proof}
		It is enough to observe that $\mathcal{E}(P,L)(-1)=\mathcal{E}(P,\varpi L)$. From this and Lemma \ref{L:uni_vb}, we obtain the isomorphism of the simplicial complexes. $\Gamma_{P}$-equivariance is also clear. 
	\end{proof}
	
	Consequently, most notions involving the simplicial structure on $\Chi_\bullet(P)$ have an equivalent notion in $\mathcal{B}_{\bullet}(W)$ and vice versa. 
	
	\begin{rem}
		In many references(eg. \cite{gebhard1},\cite{drin},\cite{de}), the Bruhat-Tits building $\mathcal{BT}^{\,r}_\bullet$ of rank $r$ is defined as the simplicial complex whose
		
		(1) \textit{vertices} are given by homothety classes $\langle L\rangle$ of rank $r$ $\widehat{R}$-lattices $L \subset K^{r}_{\infty}$.
		
		(2) \textit{$d$-simplices} are given by $\{\langle L_0\rangle,\ldots,\langle L_d\rangle\}$ satisfying, upto ordering and scaling $L_0\supsetneq L_1\supsetneq\ldots\supsetneq L_d\supsetneq \varpi L_0$.
		
		The 1-1 correspondence \[\{R-\text{lattices} \subset W\} \xrightarrow{1-1} \{\widehat{R}-\text{lattices} \subset \widehat{W}\}\] \[L \mapsto L \otimes_R \widehat{R}, \] induces a natural isomorphism $\mathcal{B}_\bullet(W) \rightarrow \mathcal{BT}^{\,r}_\bullet$ of simplicial complexes and is also $GL_r(K)$ equivariant.   
	\end{rem}

	\subsection{Labeling the vertices of $\Chi_\bullet(P)$.}
	
	Later we will consider the homological chain complex of the Bruhat-Tits building, for which purpose it is important to consider the orientation on the simplices. With this in view, we define a labeling on the vertices of $\Chi_\bullet(P)$ generalizing the notions of \textit{even} and \textit{odd} vertices of the Bruhat-Tits tree as in \cite[Ch.2,Prop.5]{serre}. 
	
	For the following lemma, we fix a basis $\{e_1,\ldots,e_r\}$ of $W$ and consider the lattice $L_0:=Re_1 + \ldots +Re_r \subset W$.
	
	\begin{lem}
		Let $\mathcal{E}(P,L) \in \mathbb{E}(P)$ be a vector bundle. Then $\det(\mathcal{E}(P,L)) = \mathcal{L}_0(n)$ for $\mathcal{L}_0 = \mathcal{E}(\det(P),\det{L_0})$ and some $n \in \ZZ$. 
	\end{lem}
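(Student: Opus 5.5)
We compare $\det(\mathcal{E}(P,L))$ with $\mathcal{L}_0$ by working separately over $\Spec(A)$ and at $\infty$. By Remark~\ref{R:pair_asso}(ii), which says that $\mathcal{E}$ commutes with tensor operations and hence with exterior powers, we have $\det(\mathcal{E}(P,L)) = \mathcal{E}(\det P, \det L)$. Here $\det P=\wedge^r_A P \subset \wedge^r_K W$ and $\det L = \wedge^r_R L\subset \wedge^r_K W$. Both are rank one objects inside the one-dimensional $K$-vector space $\wedge^r W$, which is spanned by $e_1\wedge\cdots\wedge e_r$. (If one prefers not to rely on (ii), the same identity follows directly from the uniqueness in Lemma~\ref{L:uni_vb}: the sheaf $\wedge^r \mathcal{E}(P,L)$ is a line bundle in $\wedge^r\widetilde W$ whose restriction to $\Spec(A)$ is $\widetilde{\det P}$ and whose stalk at $\infty$ is $\det L$.)

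The key step is to compare $\det L$ with $\det L_0 = R\, e_1\wedge\cdots\wedge e_r$. Choose an $R$-basis $f_1,\dots,f_r$ of $L$, which exists since $R$ is a DVR, and write $f_j=\sum_i g_{ij}e_i$ with $g=(g_{ij})\in \GL_r(K)$. Then $f_1\wedge\cdots\wedge f_r=\det(g)\, e_1\wedge\cdots\wedge e_r$, so $\det L = \det(g)\det L_0 = \varpi^{-n}\det L_0$ with $n=-v_\infty(\det g)\in\ZZ$, because $\det g\in K^\ast$ and rank one $R$-lattices in a line are determined by the valuation of a generator.

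Substituting this, we get $\det\mathcal{E}(P,L)=\mathcal{E}(\det P,\varpi^{-n}\det L_0)$. The $\ZZ$-action identity $\mathcal{E}(P',\varpi^{-n}L')=\mathcal{E}(P',L')(n)$, from the definition of $\EE(P)$ and Lemma~\ref{L:isom_BT}, applied in rank one, gives $\mathcal{E}(\det P,\det L_0)(n)=\mathcal{L}_0(n)$. To justify this identity in rank one directly, note that $\mathcal{O}(1)$ restricts to $\mathcal{O}$ on $\Spec(A)$ and has stalk $\varpi^{-1}R$ at $\infty$. Hence $\mathcal{E}(P',L')\otimes\mathcal{O}(n)$ has the right restriction to $\Spec(A)$ and the right stalk at $\infty$, and uniqueness in Lemma~\ref{L:uni_vb} (whose proof works for any rank) finishes the argument.

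The only mildly delicate point is this last identification: the equality should be read inside $\wedge^r\widetilde W$, and we must check that tensoring with $\mathcal{O}(n)$ is compatible with the embedding into the constant sheaf. This holds because $\mathcal{O}(n)\subset\widetilde K$ canonically.
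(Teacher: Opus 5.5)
Your proof is correct and follows the same route as the paper. Both arguments identify $\det\mathcal{E}(P,L)$ with $\mathcal{E}(\det P,\det L)$ via Remark~\ref{R:pair_asso}, observe that $\det L$ differs from $\det L_0$ by a power of $\varpi$ (you take the determinant of a change-of-basis matrix where the paper cites the elementary divisor theorem), and absorb that power as a twist, with your sign bookkeeping consistent with the convention $\mathcal{E}(P,L)(n)=\mathcal{E}(P,\varpi^{-n}L)$ and your uniqueness arguments supplying steps the paper leaves implicit.
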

	
	\begin{proof}
		By Remark \ref{R:pair_asso}, we have that $\det(\mathcal{E}(P,L)) = \mathcal{E}(\det(P),\det(L))$. But by elementary divisors theorem, $\det(L)=\varpi^{n}\det(L_0)$ for a suitable integer $n \in \ZZ$ and hence the lemma.
	\end{proof}
	
	The above lemma together with the fact that for a rank $r$ vector bundle $\mathcal{E} \in \mathbb{E}(P)$, $\det(\mathcal{E}(n)) = \det(\mathcal{E})(rn)$, motivates us to define the following
	
	\begin{defi}
		\[Types \ = \{\mathcal{L}_i:=\mathcal{L}_0(-i)=\mathcal{E}(\det(P),\varpi^{i}R)\ |\ 0 \leq i \leq r-1\}.\]A vertex $\langle \mathcal{E} \rangle$ is said to be of \textit{type $i$}, if $i$ is the unique integer in $[0,r-1]$ such that $\det (\mathcal{E}(n)) = \mathcal{L}_i $ for some $n\in\ZZ$. In this case we will also denote by $type(\langle \mathcal{E}\rangle) := i$. 
	\end{defi}
	
	\begin{rem}\label{R:type_i_vertex}
		\begin{itemize}
			
			\item[(i)] From the proof of Proposition \ref{L:ari_dist}, if $\gamma \in \Gamma_P$, then there exists a $K$-basis $\{f_i\}^{r}_{i=1}$ s.t. $L=\underset{1 \leq j \leq r}{\sum}Rf_j$ and $\gamma L = \underset{1 \leq j \leq r}{\sum}\varpi^{a_j}Rf_j$ with $\underset{1 \leq j \leq r}{\sum} a_j = 0$. In particular, we see that $\det(L) = \det(\gamma(L))$ or $\gamma L$ has the same type as that of $L$. 
			\item[(ii)] By Lemma \ref{L:isom_BT}, we will also define the type of a vertex $x \in \mathcal{B}_\bullet(W)$ as the type of the corresponding vertex in $\Chi_\bullet(P)$. 
		\end{itemize}
		
	\end{rem}

	\subsection{Stabilizers of simplices}

	Let $\Gamma$ be a congruence subgroup. By the Lemma \ref{L:ari_grp}, there is a projective $A$-module $M$ of rank $r$ inside $W$ such that $\Gamma \subset \Gamma_M$ is a finite index subgroup. We can assume without loss of generality that $M=P$, where $P$ is a fixed projective $A$-module as in the last section. Here we would like to study the finiteness of the stabilizer subgroup $\Gamma_e$, for $e$ any simplex in the Bruhat-Tits building, and when possible would also like to obtain an explicit description of the stabilizer. Recall that the stabilizer subgroup $\Gamma_e$ of $\Gamma$, is the set of all $\gamma \in \Gamma$ such that $\gamma e=e$. By Lemma \ref{L:isom_BT}, without loss of generality, we can consider the avatar $\Chi_\bullet(P)$ of the Bruhat-Tits building. We have the following result 
	
	\begin{prop}(cf. \cite[\S 5]{grayson})\label{P:stab-finite}
		$\Gamma_e$ is a finite group. 
	\end{prop}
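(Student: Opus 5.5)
By Corollary \ref{C:ari_grp}, $\Gamma_e=\bigcap_i \Gamma_{x_i}$ where the $x_i$ are the vertices of $e$. It therefore suffices to treat the case where $e=\langle \mathcal{E}\rangle$ is a single vertex. Since $\Gamma\subset\Gamma_P$, we may also replace $\Gamma$ by $\Gamma_P$. By Lemma \ref{L:isom_BT} we work in $\Chi_\bullet(P)$ and write the vertex as $\langle\mathcal{E}(P,L)\rangle$.

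First I reduce from stabilizing the homothety class to stabilizing the bundle itself. Suppose $g\in\Gamma_P$ fixes $\langle \mathcal{E}(P,L)\rangle$. Then $gL=\varpi^{n}L$ for some $n\in\ZZ$. Taking determinants gives $\det(g)\det(L)=\varpi^{rn}\det(L)$. By Proposition \ref{L:ari_dist}, $\det(g)\in\FF_q^\ast$ is a unit, so $n=0$ and $gL=L$. Hence $g$ stabilizes $L$ and $P$ simultaneously, and therefore stabilizes the vector bundle $\mathcal{E}:=\mathcal{E}(P,L)$, whose restrictions to $\Spec(A)$ and to $\infty$ are preserved. So $(\Gamma_P)_e$ embeds into $\Aut_{\mathcal{O}_C}(\mathcal{E})$.

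The key step is that $\Aut_{\mathcal{O}_C}(\mathcal{E})$ is finite. It is a subgroup of the units of $\End_{\mathcal{O}_C}(\mathcal{E})=H^0(C,\mathcal{E}^\vee\otimes\mathcal{E})$. Since $C$ is projective over $\FF_q$ and $\mathcal{E}^\vee\otimes\mathcal{E}$ is coherent, this is a finite-dimensional $\FF_q$-vector space, hence a finite set. Concretely, by Remark \ref{R:pair_asso} the endomorphism ring is $\End_A(P)\cap\End_R(L)$ inside $\End_K(W)$, which is a finitely generated $A$-module intersected with a lattice at $\infty$, and so it is finite. Injectivity of $g\mapsto g|_{\mathcal{E}}$ is clear because $g$ is determined by its action on the generic fibre $W$.

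The only point needing care is the first one, ruling out $g$ moving $L$ to a nontrivial homothetic lattice. The determinant argument via Proposition \ref{L:ari_dist} handles this.
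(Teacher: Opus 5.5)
Your proof is correct and follows the same route as the paper: you reduce to a vertex via Corollary~\ref{C:ari_grp}, pass to $\Gamma_P$, and bound the stabilizer by $\Aut(\mathcal{E})\subset H^0(C,\mathcal{E}^\vee\otimes\mathcal{E})$, which is a finite-dimensional $\FF_q$-vector space. Your determinant argument, showing that an element fixing the class $\langle L\rangle$ must fix $L$ itself, makes explicit a step the paper covers only with ``clearly $(\Gamma_P)_e=\Aut(\mathcal{E})$''.
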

	
	\begin{proof}
		
		From the Corollary \ref{C:ari_grp}, any $\gamma \in \Gamma_e$ fixes all the vertices of $e$ as well, and hence it is enough to assume $e$ is a vertex. We assume that $e$ is the class of a vector bundle $\mathcal{E}$, with $\langle \mathcal{E}\rangle \in \Chi_\bullet(P)$.   
		
		Clearly the stabilizer $(\Gamma_P)_{e}=\Aut(\mathcal{E})$. But this is a finite group, since $\Aut(\mathcal{E}) \subset \End(\mathcal{E}) \simeq H^0(C,\mathcal{E}^{\vee} \otimes \mathcal{E})$ and the latter is a finite dimensional $\mathbb{F}_q$ vector space and hence is a finite set. Since $\Gamma \subset \Gamma_P$, $\Gamma_e$ is also a finite set.

	\end{proof}
	
	Moreover, when the congruence subgroup is principal, that is $\Gamma=\Gamma_P(I)$ for some proper non-zero ideal $I \subset A$, then we can explicitly describe the stabilizer group $\Gamma_e$, as we shall now explain. For any subgroup $G \subset \GL(W)$ and $L \subset W$ an $R$-lattice of maximal rank, we denote by $G_L$ the subgroup of $G$ fixing $L$. 
	
	\begin{prop}\label{P:nilp_stab}
		Let $L$ be an $R$-lattice of rank $r$ and $I \subset A$ be a proper non-zero ideal. 
		\begin{itemize}
			\item[(i)] Then \[H^{0}(C,\mathcal{E}(P,L)^\vee \otimes_{\mathcal{O}_C} \mathcal{E}(P,L)) = \End_A(P)_L.\]
			\item[(ii)] Let $\mathcal{I}:=\mathcal{E}(I,R)$ and for any vector bundle $\mathcal{F}$ on $C$ and $n \in \ZZ$, denote by $\mathcal{F}(I^n):=\mathcal{F} \otimes_{\mathcal{O}_C} \mathcal{I}^{\otimes n}$. For an element 
			\[\gamma \in H^{0}(C,\mathcal{E}(P,L)^\vee \otimes_{\mathcal{O}_C} \mathcal{E}(P,L)(I)) \subset H^0(C,\mathcal{E}(P,L)^\vee \otimes_{\mathcal{O}_C} \mathcal{E}(P,L))=\End_A(P)_L,\] we have that $\gamma P \subset IP$ and that $\gamma$ is nilpotent. 
		\end{itemize}
	\end{prop}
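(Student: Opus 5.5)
For (i), I would use Remark \ref{R:pair_asso}: $\mathcal{E}(P,L)^\vee\otimes\mathcal{E}(P,L)=\mathcal{E}(P^\vee\otimes_A P, L^\vee\otimes_R L)=\mathcal{E}(\End_A(P),\End_R(L))$. Here $\End_A(P)$ and $\End_R(L)$ sit inside $\End_K(W)$ as full-rank projective modules, and the tensor identifications are compatible with these embeddings. Remark \ref{R:pair_asso}(i) then gives global sections $\End_A(P)\cap\End_R(L)$ inside $\End_K(W)$. An element of $\End_A(P)$ lies in $\End_R(L)$ exactly when it maps $L$ into $L$. So the intersection is $\End_A(P)_L$, where $G_L$ is read as the endomorphisms preserving $L$.

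For (ii), I would first describe the sheaf. We have $\mathcal{E}(P,L)^\vee\otimes\mathcal{E}(P,L)(I)=\mathcal{E}(\End_A(P)\otimes_A I,\End_R(L)\otimes_R R)=\mathcal{E}(I\End_A(P),\End_R(L))$. This uses that $I$ is projective, so tensoring with it is the product inside $\End_K(W)$. Applying Remark \ref{R:pair_asso}(i) again, its global sections are $I\End_A(P)\cap\End_R(L)$. Hence such a $\gamma$ satisfies $\gamma P\subset I\End_A(P)P=IP$ and $\gamma L\subset L$.

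For nilpotency, I would note that $\gamma^n$ lies in $H^0$ of $\mathcal{E}(P,L)^\vee\otimes\mathcal{E}(P,L)(I^n)$, since products of sections multiply. So $\gamma^n\in I^n\End_A(P)\cap\End_R(L)$ for all $n$. The degree of $\mathcal{I}=\mathcal{E}(I,R)$ is $-\deg(I)<0$, because $I$ is a proper nonzero ideal and $\mathcal{I}$ is the ideal sheaf of the closed subscheme $V(I)\subset C-\{\infty\}$. Let $\mathcal{F}=\mathcal{E}(P,L)^\vee\otimes\mathcal{E}(P,L)$. Then $\mathcal{F}(I^n)=\mathcal{F}\otimes\mathcal{I}^{n}$, and for large $n$ this bundle has no nonzero global sections. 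The cleanest way to see this is to filter $\mathcal{F}$ by line bundles, or to bound the slopes of its Harder--Narasimhan pieces, which all drop by $n\deg(I)$. So $\gamma^n=0$ for $n\gg0$, and $\gamma$ is nilpotent.

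An alternative argument for nilpotency avoids degrees. The characteristic polynomial of $\gamma$ has coefficients integral over both $A$ and $R$, and the coefficient of $T^{r-k}$ lies in $I^k$. Being integral at every place of $C$, these coefficients are constants in $\FF_q$. A constant in $I^k$ with $I$ proper must be $0$, so $\gamma$ is nilpotent by Cayley--Hamilton.

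I expect the main obstacle to be the bookkeeping needed to justify the global-section computations. One must check that $\mathcal{E}(\cdot,\cdot)$ really commutes with tensor products compatibly with the embeddings into $\End_K(W)$, so that the intersections are taken inside the same space. One must also check that $\mathcal{E}(I,R)$ is the ideal sheaf of negative degree, and that $I\otimes_A\End_A(P)\to I\End_A(P)$ is an isomorphism; the latter holds by flatness of $\End_A(P)$. The characteristic-polynomial argument seems the most robust, and I would use it.
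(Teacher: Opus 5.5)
Your proposal is correct. For (i) and for $\gamma P\subset IP$ you argue exactly as the paper does: via Remark~\ref{R:pair_asso}, the global sections are identified inside $\End_K(W)$ with $\End_A(P)\cap\End_R(L)$, resp.\ $\Hom_A(P,IP)\cap\End_R(L)=I\End_A(P)\cap\End_R(L)$. Your first nilpotency argument is also the paper's. Writing $\mathcal{F}=\mathcal{E}(P,L)^\vee\otimes\mathcal{E}(P,L)$, one has $\gamma^n\in H^0(C,\mathcal{F}(I^n))$, and this space vanishes for $n\gg0$. The paper gets the vanishing from ampleness of $\mathcal{I}^\vee$ plus a citation of Serre vanishing. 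As usually stated, Serre vanishing concerns higher cohomology, so Serre duality is implicitly needed there. Your filtration of $\mathcal{F}$ by line bundles, whose degrees drop by $n\dim_{\FF_q}(A/I)$ under the twist, justifies the vanishing more directly.

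The characteristic-polynomial argument you say you would actually use is a genuinely different route, and it is sound. Since $\gamma$ preserves both $P$ and $L$, and $A$ and $R$ are integrally closed, the coefficients $c_1,\dots,c_r$ of the characteristic polynomial of $\gamma$ on $W$ lie in $A\cap R=\FF_q$. Since $\gamma P\subset IP$ (locally on $\Spec A$ the module $P$ is free and $\gamma$ has entries in $I$), they lie in $I$, indeed $c_k\in I^k$. A proper ideal contains no nonzero constant, so the characteristic polynomial is $T^r$, and Cayley--Hamilton gives $\gamma^r=0$.

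This route avoids cohomology altogether and gives the explicit bound $\gamma^r=0$. That sharpens Corollary~\ref{C:prin_stab}: every element of $\Gamma_P(I)_L$ has order dividing the least power of $p$ that is $\geq r$. The paper's route stays within the vector-bundle formalism that is reused in Theorem~\ref{T:un_contr}. It also shows directly that the whole ideal $H^0(C,\mathcal{F}(I))$ of $\End_A(P)_L$ is nilpotent, not just each element, since any product of $n$ of its elements lies in $H^0(C,\mathcal{F}(I^n))=0$. However, it gives no explicit nilpotency index.
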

	\begin{proof}
		(i) From the Remark \ref{R:pair_asso}(i,ii), it is clear that \[H^{0}(C,\mathcal{E}(P,L)^\vee \otimes_{\mathcal{O}_C} \mathcal{E}(P,L)) = \End_A(P) \cap \End_R(L),\] where the intersection on the right side is inside $\End_K(W)$. Clearly $\End_A(P) \cap \End_R(L)=\End_A(P)_L$.
		
		(ii) Again by Remark \ref{R:pair_asso}(ii), \[\mathcal{E}(P,L)^\vee \otimes_{\mathcal{O}_C} \mathcal{E}(P,L)(I) = \mathcal{E}((P^\vee) \otimes_A IP,L^\vee \otimes_R L) = \mathcal{E}(\Hom_A(P,IP),\End_R(L)).\] Hence by part (i) of the same remark, $H^{0}(C,\mathcal{E}(P,L)^\vee \otimes_{\mathcal{O}_C} \mathcal{E}(P,L)(I)) = \Hom_A(P,IP) \cap \End_R(L)$, and hence we can conclude that $\gamma P \subset IP$. 
		It follows that for any $n \geq 0$, $\gamma^n \in \Hom_A(P,I^nP) \cap \End_R(L)=H^{0}(C,\mathcal{E}(P,L)^\vee \otimes_{\mathcal{O}_C} \mathcal{E}(P,L)(I^n))$. By \cite[Ch.7,Prop.4.4]{liu}, the line bundle $\mathcal{I}^\vee$ is ample(since $\deg(\mathcal{I}) < 0$). Hence by Serre vanishing theorem(\cite[Ch.5,Prop.3.6]{liu}), $H^{0}(C,\mathcal{E}(P,L)^\vee \otimes_{\mathcal{O}_C} \mathcal{E}(P,L)(I^n))=0$ for $n \gg 0$. Hence $\gamma$ is nilpotent.   
	\end{proof}
	
	\begin{cor}\label{C:prin_stab}
		$\Gamma_P(I)_L = 1 + H^{0}(C,\mathcal{E}(P,L)^\vee \otimes_{\mathcal{O}_C} \mathcal{E}(P,L)(I))$, where the sum on the right is inside $\End_K(W)$. Moreover $\Gamma_P(I)_L $ is a finite $p$-group.
	\end{cor}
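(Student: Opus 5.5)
The plan is to identify $\Gamma_P(I)_L$ with the set of $\gamma \in \End_A(P)_L$ that are invertible and satisfy $\gamma \equiv 1 \bmod I$, and then apply Proposition~\ref{P:nilp_stab}.

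\emph{Step 1: $\Gamma_P(I)_L \subset 1 + H^0(\ldots)$.} Take $g \in \Gamma_P(I)_L$, so $g \in \Aut_A(P)$, $gL = L$, and $g$ acts trivially on $P/IP$. Put $\gamma := g - 1 \in \End_K(W)$. Then $\gamma P \subset IP$ and $\gamma L \subset L$, so $\gamma \in \Hom_A(P,IP) \cap \End_R(L)$. As in the proof of Proposition~\ref{P:nilp_stab}(ii), using Remark~\ref{R:pair_asso}, this intersection equals
\[H^{0}(C,\mathcal{E}(P,L)^\vee \otimes_{\mathcal{O}_C} \mathcal{E}(P,L)(I)).\]
Hence $g \in 1 + H^0(\ldots)$.

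\emph{Step 2: the reverse inclusion.} Take $\gamma \in H^0(\ldots)$ and set $g := 1 + \gamma$. By Proposition~\ref{P:nilp_stab}(ii), $\gamma P \subset IP$ and $\gamma$ is nilpotent, say $\gamma^N = 0$. Then $g$ is invertible in $\End_K(W)$ with inverse $\sum_{k=0}^{N-1} (-\gamma)^k$. This inverse preserves both $P$ and $L$, because $\gamma$ does; the fact that $\gamma$ preserves $L$ follows from part (i), since the sections lie in $\End_A(P)_L$. So $g \in \Aut_A(P)$ and $gL = L$, and since $\gamma P \subset IP$ we get $g \equiv 1 \bmod IP$. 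Thus $g \in \Gamma_P(I)_L$.

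\emph{Step 3: finite $p$-group.} The group is finite because $H^0$ of a coherent sheaf on the projective curve $C$ is a finite-dimensional $\FF_q$-vector space; alternatively, invoke Proposition~\ref{P:stab-finite}. Each element has the form $1 + \gamma$ with $\gamma$ nilpotent. Choose $p^m \geq N$; in characteristic $p$ we then have
\[(1+\gamma)^{p^m} = 1 + \gamma^{p^m} = 1.\]
So every element has $p$-power order, and a finite group with this property is a $p$-group.

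The only point needing care is the identification in Step 1 of $H^0$ of the twisted sheaf with $\Hom_A(P,IP) \cap \End_R(L)$. This identification is already carried out in the proof of Proposition~\ref{P:nilp_stab}(ii), so I do not expect any genuine obstacle.
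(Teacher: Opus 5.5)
Your proposal is correct and follows essentially the same route as the paper: both inclusions via the identification $H^0(C,\mathcal{E}(P,L)^\vee\otimes\mathcal{E}(P,L)(I))=\Hom_A(P,IP)\cap\End_R(L)$ from Proposition~\ref{P:nilp_stab}, finiteness via Proposition~\ref{P:stab-finite}, and the $p$-group property via $(1+\gamma)^{p^m}=1+\gamma^{p^m}=1$. Your Step~2 also spells out why $1+\gamma$ is invertible in $\Aut_A(P)$ and stabilizes $L$ (its inverse is a polynomial in $\gamma$ preserving $P$ and $L$), which the paper leaves as ``clear''.
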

	\begin{proof}
		By Proposition \ref{P:nilp_stab}, the inclusion $\supset$ is clear. For the other direction, it is enough to observe that for any $\gamma \in \Gamma_P(I)$, $\gamma-1 \in \Hom_A(P,IP)_L$. For the last assertion note that $\Gamma_P(I)_L $ is finite by Proposition~\ref{P:stab-finite} and since any $n\in H^{0}(C,\mathcal{E}(P,L)^\vee \otimes_{\mathcal{O}_C} \mathcal{E}(P,L)(I))$ is nilpotent, we have $(1+n)^{p^m}=1$ as soon as $n^{p^m}=0$.
	\end{proof}
	
	\section{An equivariant homotopy between the unstable subcomplex and the (rational) Tits building(after Grayson-Quillen)}

	In this section we will show the promised homotopy equivalence between certain subcomplex(es) of the Bruhat-Tits building with the Tits building $|\mathcal{T}_\bullet(W)|$. It seems prudent at the outset to take into account the action of congruence subgroups in the homotopies to be discussed, for which the results from \S2.4 become relevant. 
	
	We will be using the description of Bruhat-Tits building with vertices as equivalence classes of $R$-lattices in $W$(recall that $R=\mathcal{O}_{C,\infty}$). We denoted this version of the building by $\cB_\bullet(W)$ in \S3. Also, we had a $\ZZ$-poset $\BB(W)$ which consisted of all $R$-lattices of maximal rank inside $W$, with $n\in \ZZ$ acting on $L \in 
	\BB(W)$ via $n+L:=\varpi^{-n}L$. Furthermore we denoted by $\mathcal{E}(P,L)$ the unique vector bundle on $C$, such that its sections over $\Spec(A)$ are $P$, for a fixed maximal rank projective $A$-module $P$ inside $W$, and its stalk at $\infty$ is an $R$-lattice $L \subset W$. The congruence subgroups we will be concerned with in this section are the principal ones, namely we let $I \subset A$ be a proper non-zero ideal and set $\Gamma=\Gamma_P(I)$, so that $\Gamma \trianglelefteq \Gamma_P=\Aut_A(P)$. 
	
	\subsection*{Tits building and the Steinberg module}
	
	We recall here for the benefit of the reader the definition of the Tits building for $W$ and the Steinberg module associated to it. 
	
	\begin{defi}
		The \textit{Tits building for $W$}, denoted by $\mathcal{T}_\bullet(W)$, is the simplicial complex based on the poset of \textit{proper, non-zero} $K$-subspaces of $W$. 
	\end{defi}
	
	In particular, the set of vertices $\mathcal{T}_0(W)$ consists of proper non-zero subspaces of $W$ and $d$-simplices $\mathcal{T}_d(W)$ consists of proper chains of length $d+1$ of such subspaces. Also the group $\GL(W)$ acts naturally on $\mathcal{T}_\bullet(W)$ via simplicial automorphisms. We will denote a vertex in $\mathcal{T}_\bullet(W)$ by the subspace representing it. 
	
	 $\mathcal{T}_\bullet(W)$ is often also called as a spherical building due to the fact that an apartment of such a building is the Coxeter complex associated with the symmetric group $\mathfrak{S}_r$, hence can be regarded as triangulations of an  $(r-2)$ sphere. This is clarified by the following important result known as the \textit{Solomon-Tits theorem}
	
	\begin{thm}(\cite[\S2]{solomon_quillen})\label{solomon_quillen}
		If $r\geq 2$, then $\mathcal{T}_\bullet(W)$ has the homotopy type of a bouquet of $(r-2)$-spheres, that is, it is a wedge sum of a family of $\mathbb{S}^{r-2}$. 
	\end{thm}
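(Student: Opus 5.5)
This is the Solomon--Tits theorem, which the paper quotes from \cite[\S2]{solomon_quillen}. The plan is to reproduce Quillen's argument by induction on $r=\dim_K W$, using only the poset tools of \S2 with $G=1$. For $r=2$ the poset $\mathcal{T}(W)$ consists of the lines of $W$ with no order relations, so it is a discrete set, that is, a bouquet of $0$-spheres. For general $r$ I would prove the slightly stronger statement that $\mathcal{T}_\bullet(W)$ is $(r-3)$-connected. Since the complex has dimension $r-2$, this suffices: a $(d-1)$-connected CW complex of dimension $d$ is homotopy equivalent to a wedge of $d$-spheres, by Hurewicz together with the non-equivariant Whitehead theorem (Remark~\ref{R:non-equi-whitehead}), because $H_d$ of such a complex is free.

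Fix a hyperplane $H\subset W$. Let $\mathcal{Y}\subset\mathcal{T}(W)$ be the subposet of proper nonzero subspaces $V$ with $V\not\subset H$, and let $\mathcal{T}'\subset \mathcal{T}(W)$ be the subposet of subspaces $V$ with $V\cap H\neq 0$.

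\textbf{Step 1: $\mathcal{T}'$ is contractible.} Consider $\phi(V)=V\cap H$. It is order preserving and satisfies $\phi(V)\le V$. The dual form of Corollary~\ref{C:quillen1.3_2} then shows that $\mathcal{T}'$ deformation retracts onto $\phi(\mathcal{T}')$. This image is the set of nonzero subspaces of $H$, including $H$ itself, so it has a largest element $H$ and is contractible by Corollary~\ref{C:quillen1.3_1}.

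\textbf{Step 2: $\mathcal{T}(W)$ is obtained from $\mathcal{T}'$ by attaching the lines $\ell\not\subset H$.} These lines are exactly the elements of $\mathcal{T}(W)\setminus\mathcal{T}'$. They are minimal and pairwise incomparable, so each one is attached along its link $\mathcal{T}_{>\ell}$. This link is isomorphic to $\mathcal{T}(W/\ell)$, which has dimension $r-1$, so by induction it is a wedge of $(r-3)$-spheres. Hence $\mathcal{T}(W)$ is homotopy equivalent to the homotopy cofibre of $\coprod_\ell \mathcal{T}_{>\ell}\to\mathcal{T}'$. Since $\mathcal{T}'$ is contractible, this cofibre is $\bigvee_\ell \Sigma\,\mathcal{T}(W/\ell)$, a wedge of $(r-2)$-spheres. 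Equivalently, one can argue with Mayer--Vietoris and van Kampen that $\mathcal{T}(W)$ is $(r-3)$-connected.

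\textbf{Main obstacle.} The delicate step is making the attachment in Step 2 rigorous at the level of geometric realizations. One has to check that $|\mathcal{T}(W)|$ is the union of $|\mathcal{T}'|$ and the cones $|\mathcal{T}_{\ge\ell}|$, and that these cones meet $|\mathcal{T}'|$ exactly in $|\mathcal{T}_{>\ell}|$ and meet each other only inside $|\mathcal{T}'|$. This follows because every chain either lies in $\mathcal{T}'$ or has minimal element a line $\ell\not\subset H$ with all other members in $\mathcal{T}'$. Once that is checked, the conclusion follows from contracting the contractible subcomplex $|\mathcal{T}'|$, which is a homotopy equivalence for a CW pair.
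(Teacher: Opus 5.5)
Your proof is correct and is essentially Quillen's argument from the cited \S2; the paper itself gives no proof and only quotes the result. In that argument the subposet of subspaces meeting $H$ is contractible via $V\mapsto V\cap H\le H$, and $\mathcal{T}(W)$ is obtained from it by coning off the links $\mathcal{T}_{>\ell}\cong\mathcal{T}(W/\ell)$ of the lines $\ell\not\subset H$, so that $|\mathcal{T}(W)|\simeq\bigvee_\ell\Sigma|\mathcal{T}(W/\ell)|$. Your opening reduction to $(r-3)$-connectivity is not needed, since Step~2 already gives the wedge decomposition directly; it is also not a stronger statement, and the Hurewicz--Whitehead justification does not literally cover the graph case $r=3$.
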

	
	Consequently, the (reduced) homology is non vanishing only in the top dimension. Also the action of $\GL(W)$ on $\mathcal{T}_\bullet(W)$ induces an action of $\GL(W)$ on the homology.
	
	\begin{defi}\label{D:steinberg}
		The $\ZZ[\GL(W)]$-module $\St(W):=\tilde{H}_{r-2}(\mathcal{T}_\bullet(W),\ZZ)$ is called the \textit{Steinberg module for W}
	\end{defi}
	
	\begin{rem}
		Due to the work of Ash \'{e}t al in \cite{Ash_1,Ash_2}, one has an explicit description of $\St(W)$ via a set of generators (known as \textit{modular symbols}) and relations, with a natural action of $\GL(W)$ on the set of generators.  
	\end{rem}

	\subsection{Contractible subcomplexes of the $\Gamma$-unstable complex} 
	
	Let $0 \subsetneq W_1 \subsetneq W$ be a proper subspace, so that $\sigma=(W_1) \in \mathcal{T}_\bullet(W)$. We define the following subposet of $\BB(W)$, \[\BB(W)_\sigma:= \{L \in \mathbb{B}(W)| \exists \ g \in \Gamma_L \setminus \{1\} \text{ s.t. }g_{|W_1}=1_{|W_1}.\}\] Clearly the $\ZZ$-action on $\BB(W)$ induces a cofinal $\ZZ$-action on $\BB(W)_{\sigma}$. We form the simplicial complex \[\cB_\bullet(W)_\sigma, \] whose vertices are the $\ZZ$-orbits of elements of $\BB(W)_\sigma$, and $\gamma=\{L_0, \ldots ,L_d\}$ gives rise to a $d$-simplex precisely when $\Gamma_\gamma=\underset{0 \leq i \leq d}{\bigcap} \Gamma_{L_i}\neq \{1\}$ and there is a $g \in \Gamma_\gamma \setminus \{1\}$ such that $g_{|W_1}=1_{W_1}$.
	
	\begin{rem}
		Note that the complex $\cB_\bullet(W)_\sigma$ is not the same as $\langle \BB(W)_\sigma \rangle$. The latter is a possibly larger complex. 
	\end{rem}
	
	The main results of this section are motivated from \cite[Thm.4.1 and Cor.4.2]{grayson}. As alluded to before, also the proof is structured around the corresponding results in op.cit.
	
	In the following theorem, we denote by $\sigma=(W_1) \in \mathcal{T}_\bullet(W)$ the vertex corresponding to the proper subspace $0 \subsetneq W_1 \subsetneq W$. 
	
	\begin{thm}(cf. \cite[Thm.4.1]{grayson})\label{T:un_contr}
		$|\cB_\bullet(W)_{\sigma}|$ is contractible. 
	\end{thm}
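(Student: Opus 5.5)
The plan is to follow the strategy of \cite[Thm.~4.1]{grayson}: deform the complex by monotone, $\ZZ$-equivariant maps of posets and use Proposition~\ref{P:adj_2} to conclude that the deformed maps are homotopic to the identity. The deformation pushes lattices towards $W_1$. The structural fact I will use throughout is this. If $g\in\Gamma\setminus\{1\}$ stabilizes $L$ and $g_{|W_1}=1_{W_1}$, then $g$ fixes $L\cap W_1$ pointwise. Hence $g$ also stabilizes every lattice of the form $L+\varpi^{-n}(L\cap W_1)$, and more generally every lattice of the form $L+M_1$ with $M_1\subset W_1$.

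\textbf{Step 1 (reduction to finite subcomplexes).} Homotopy groups of $|\cB_\bullet(W)_\sigma|$ are detected on finite subcomplexes. By Whitehead's theorem (Remark~\ref{R:non-equi-whitehead}) together with the CW structure of Lemma~\ref{L:simpl_cw}, it therefore suffices to show the following. For every finite subcomplex $K\subset\cB_\bullet(W)_\sigma$, the inclusion $K\hookrightarrow\cB_\bullet(W)_\sigma$ is null-homotopic. Also, nonemptiness has to be checked; it follows from Step 3 below.

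\textbf{Step 2 (pushing toward $W_1$).} For $n\ge0$ define $f_n:\BB(W)\to\BB(W)$ by $f_n(L)=L+\varpi^{-n}(L\cap W_1)$. These maps are monotone and commute with the $\ZZ$-action, so by Lemma~\ref{L:poset_sim} they induce simplicial maps $\langle f_n\rangle$. By the observation above, if $\gamma=\{L_0,\dots,L_d\}$ is a simplex of $\cB_\bullet(W)_\sigma$ with witness $g$, then the same $g$ stabilizes all $L_i$ and all $f_n(L_i)$. Thus $\langle f_n\rangle$ preserves $\cB_\bullet(W)_\sigma$. Moreover, every simplex produced by adjacency between $f_n$ and $f_{n+1}$ is a mixed chain of $L_i$'s and their images, and this is again a simplex of $\cB_\bullet(W)_\sigma$ with the same witness $g$. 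I will equip $\BB(W)$ with an augmentation $\varepsilon$ (the degree of $\det\mathcal{E}(P,L)$, normalised as in \S3.3, divided by $r$ and rounded down). Then I check conditions (i) and (ii) of Definition~\ref{D:adj_2} for the pair $f_n,f_{n+1}$. Condition (i) is $f_n\le f_{n+1}$, which is clear. Condition (ii) uses $L\subset L'$ with $\varepsilon(L)<\varepsilon(L')$, which forces $L'\supset\varpi^{-1}L$, and hence $f_{n+1}(L)\subset f_n(L')$. Proposition~\ref{P:adj_2}, restricted to the subcomplex, then gives $|\langle f_n\rangle|\simeq\Id$ on $\cB_\bullet(W)_\sigma$.

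\textbf{Step 3 (coning off for large $n$).} Fix $K$ and a lattice $L^\ast$. The space $\Hom_A(P,IP)\cap\Hom_K(W/W_1,W_1)$ has nonzero elements $N$, for instance by Riemann–Roch applied to the corresponding bundle. Since $N$ kills $W_1$ and $N^2=0$, the element $g=1+N$ lies in $\Gamma$, is nontrivial, and restricts to the identity on $W_1$. For $n\gg0$, depending only on the finitely many lattices involved, $N$ maps each $L_i$ of $K$, and also $L^\ast$, into $\varpi^{-n}(L_i\cap W_1)$ and into $\varpi^{-n}(L^\ast\cap W_1)$. So this single $g$ stabilizes every $f_n(L_i)$, every lattice $f_n(L_i)+\varpi^{-m}L^\ast$ for $m\ge0$, and every chain built from these. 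I will then contract $\langle f_n\rangle(K)$ inside $\cB_\bullet(W)_\sigma$ by a second chain of adjacent monotone maps $h_m(M)=M+\varpi^{-m}L^\ast$. Because all lattices now have a common witness $g$, the condition "is a simplex of $\cB_\bullet(W)_\sigma$" reduces to "is a chain". As $m$ grows, every lattice is eventually absorbed by the $\varpi^{-m}L^\ast$-term, so $h_m$ for $m\gg0$ is constant up to the $\ZZ$-action. This yields the null-homotopy.

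\textbf{Main obstacle.} I expect the main obstacle to be Step 3. One issue is that the uniform choice of $N$ and $n$ must be made so that \emph{all} adjacency simplices occurring in the homotopies share one nontrivial witness. Recall from the Remark after the definition that $\cB_\bullet(W)_\sigma$ is not $\langle\BB(W)_\sigma\rangle$: a simplex needs a common $g$, not merely a $g$ for each vertex. A second issue is that the final map must become genuinely constant on $\ZZ$-orbits. If the $h_m$-step turns out to be problematic, I will first try replacing it by Grayson's trick of comparing $M\mapsto M+\varpi^{-m}L^\ast$ with $M\mapsto\varpi^{-m}L^\ast+(\text{a fixed }W_1\text{-lattice})$ via Lemma~\ref{L:quillen1.3}, using inclusions $f\le g$.
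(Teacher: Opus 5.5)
The gap is in Step~2, not in Step~3 where you expected it. To verify condition~(ii) of Definition~\ref{D:adj_2} you use the implication ``$L\subset L'$ and $\varepsilon(L)<\varepsilon(L')$ force $L'\supset\varpi^{-1}L$''. This is false for \emph{every} augmentation. If $L_0\subsetneq L_1\subsetneq\varpi^{-1}L_0$ with $\varepsilon(L_0)=\varepsilon(L_1)$, then the pair $t=L_1$, $t'=\varpi^{-1}L_0$ violates it, and such pairs always exist. For your maps, (ii) at this pair reads $f_{n+1}(L_1)\subset\varpi^{-1}L_0+\varpi^{-n-1}(L_0\cap W_1)$. Intersecting with $W_1$ shows that it forces $L_1\cap W_1=L_0\cap W_1$.

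This genuinely fails inside $\cB_\bullet(W)_\sigma$. Take $r=2$, $W_1=Ke_1$, and a nonzero $N\in\Hom_A(P,IP)$ with $N(W_1)=0$ and $N(W)\subset W_1$. Let $L_0=Re_1\oplus\varpi^bRe_2\subset L_1=\varpi^{-1}Re_1\oplus\varpi^bRe_2$, with $b\gg0$ of the parity giving $\varepsilon(L_0)=\varepsilon(L_1)$ for your augmentation. Then $1+N$ stabilizes both, so this is an edge of $\cB_\bullet(W)_\sigma$ ordered $L_0<L_1$. The homotopy of Proposition~\ref{P:adj_2} needs $\{\langle f_n(L_0)\rangle,\langle f_{n+1}(L_1)\rangle\}$ to be a simplex. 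But $f_n(L_0)=\varpi^{-n}Re_1\oplus\varpi^bRe_2$ and $f_{n+1}(L_1)=\varpi^{-n-2}Re_1\oplus\varpi^bRe_2$ are at distance $2$.

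For $\dim W_1\ge2$, no other augmentation rescues $f_n$ either. Take $w_1,w_2\in W_1$ independent and $M$ a suitably small lattice in a complement of $Kw_1+Kw_2$ compatible with $W_1$. Set $L=Rw_1\oplus Rw_2\oplus M$, $L'=\varpi^{-1}Rw_1\oplus Rw_2\oplus M$ and $L''=\varpi^{-1}(Rw_1\oplus Rw_2)\oplus M$. The same distance computation shows that neither $L\subset L'$ nor $L'\subset L''$ may be a same-level edge. This forces $\varepsilon(L'')=\varepsilon(L)+2$, contradicting $L''\subset\varpi^{-1}L$.

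The culprit is that the amount $L\cap W_1$ you add grows along a simplex. The missing idea is Grayson's device, which the paper uses: push by a \emph{fixed} lattice $\widetilde L\subset W_1$ and put the augmentation into the exponent, $G_n(L)=L+\varpi^{-n-\varepsilon(L)}\widetilde L$. Then $G_n$ is $\ZZ$-equivariant, and (ii) follows at once from $\varpi^{-n-1-\varepsilon(L)}\widetilde L\subset\varpi^{-n-\varepsilon(L')}\widetilde L$. Your witness argument still applies, because $g$ fixes $W_1$ pointwise.

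Step~3 needs two repairs. First, $h_m(M)=M+\varpi^{-m}L^\ast$ does not commute with the $\ZZ$-action, since $h_m(\varpi^{-1}M)\ne\varpi^{-1}h_m(M)$. So neither Lemma~\ref{L:poset_sim} nor Proposition~\ref{P:adj_2} applies to it; use $M\mapsto M+\varpi^{-m-\varepsilon(M)}L^\ast$ instead. Second, $g$ stabilizes $f_n(L_i)+\varpi^{-m}L^\ast$ for all $m$ only if $L^\ast$ itself is $g$-stable. Your condition on $NL^\ast$ only makes $f_n(L^\ast)$ stable, so choose $L^\ast$ $g$-stable from the start.

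With these repairs, your endgame is a valid route that differs from the paper's. For $n\gg0$ the finitely many $G_n(L_i)$ share the witness $1+N$, because $N(G_n(L_i))=N(L_i)\subset\varpi^{-n-\varepsilon(L_i)}\widetilde L$. Hence $[G_n](K)$ lies in the full subcomplex of $(1+N)$-stable lattices. That subcomplex is contained in $\cB_\bullet(W)_\sigma$ and is contracted by Grayson's maps above.

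The paper instead uses that, on any finite set of lattices, $G_n=G_n\circ\beta\circ\alpha$ for $n\gg0$, and factors through the contractible building $\cB_\bullet(W/W_1)$. This requires constructing the section $\beta$ with the cohomologically defined augmentation $\epsilon$, so that $\beta$ carries simplices to simplices with a common witness. Your route avoids $\alpha$, $\beta$ and $\epsilon$ altogether. It needs only some nonzero $N\in\Hom_A(P,IP)$ killing $W_1$ with image in $W_1$, at the price of a non-canonical, compactness-dependent choice of $n$ and $N$.
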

	
	\begin{proof}
		Denote by $P_1:=P \cap W_1$ and let \[\alpha:\BB(W)_{\sigma} \rightarrow \BB(W/W_1)\] be the map  sending \[L \mapsto L/(L \cap W_1).\]We denote by $[\alpha]:\cB_\bullet(W)_\sigma \rightarrow \cB_\bullet(W/W_1)$ the induced simplicial map. 
		
		To ease the notation below, for any lattice $L' \subset W/W_1$(resp. $L_1 \subset W_1$), we denote by $\mathcal{E}_{L'}$(resp. $\mathcal{E}_{L_1}$) the vector bundle $\mathcal{E}(P/(P\cap W_1),L')$(resp. $\mathcal{E}(P_1,L_1)$). Since $P_1 \subset P$ is a saturated $A$-submodule, the quotient $P/P_1$ is also a projective $A$-module. We fix a splitting $P \simeq P_1 \oplus P/P_1$, consequently also fixing a splitting $W \simeq W_1 \oplus W/W_1$. Also fix an $R$-lattice $\widetilde{L} \subset W_1$ of maximal rank, so that we can form the vector bundle $\widetilde{\mathcal{E}}:=\mathcal{E}_{\widetilde{L}}$.
		
		Now define an augmentation map(see \ref{D:augmentation}) \[\epsilon:\BB(W/W_1) \rightarrow \ZZ\] by \[L' \mapsto \min\{m| H^0(C,\mathcal{E}^\vee_{L'} \otimes \widetilde{\mathcal{E}}(I + m\infty)) \neq \{0\}\} + 1. \] The convention regarding the twist $(I+m\infty)$ is as in Proposition \ref{P:nilp_stab}(ii). This allows us to define \[\beta:\BB(W/W_1) \rightarrow \BB(W)\] given by \[L' \mapsto \varpi^{-\epsilon(L')}\tilde{L} \oplus L',\]where the direct sum above is with respect to the splitting of $P$ fixed above.

		\underline{\textit{Claim 1.} } $\beta(L') \in \BB(W)_{\sigma}$. Moreover, given a simplex $\gamma=\{\langle L'_0\rangle,\ldots,\langle L'_d\rangle\}$ in $\cB_\bullet(W/W_1)$, $\beta(\gamma)$ is a simplex in $\cB_\bullet(W)_\sigma$. 
		
		\begin{proof}
			From Corollary \ref{C:prin_stab}, we know that for any $R$-lattice $L \subset W$ of maximal $R$-rank 
			\[\Gamma_L=1+H^0(C,\mathcal{E}^\vee(P,L) \otimes \mathcal{E}(P,L)(I)).\] If we set $\mathcal{E}'_L:=\mathcal{E}_{L/(L \cap W_1)}$ and $\mathcal{E}_{L,1}:=\mathcal{E}_{L \cap W_1}$, then we have a subsheaf \[(\mathcal{E}'_{L})^\vee \otimes \mathcal{E}_{L,1}(I) \hookrightarrow \mathcal{E}^\vee(P,L) \otimes \mathcal{E}(P,L)(I).\] First we show that $\Gamma_{\beta(L')} \neq 1$, for which it is sufficient to show that $H^{0}(C,(\mathcal{E}'_{\beta(L')})^\vee \otimes \mathcal{E}_{\beta(L'),1}(I)) \neq 0$. Now $\mathcal{E}'_{\beta(L')}=\mathcal{E}_{L'}$ and $\mathcal{E}_{\beta(L'),1}=\widetilde{\mathcal{E}}(\epsilon(L')\infty)$. Hence by definition of $\epsilon$, we  have the non-vanishing of $H^{0}(C,\mathcal{E}^\vee_{L'} \otimes \widetilde{\mathcal{E}}(I + \epsilon(L')\infty))$, and consequently also the non-vanishing of $H^0(C,\mathcal{E}^\vee(P,L) \otimes \mathcal{E}(P,L)(I))$. Also any element $g' \in H^{0}(C,\mathcal{E}^\vee_{L'} \otimes \widetilde{\mathcal{E}}(I + \epsilon(L')\infty))$, acts as the zero matrix on $W_1$, so that $g=1+g'$ acts as the identity on $W_1$. This proves the first part of the claim 1.
			
			For the second part, consider the $d$-simplex in $\cB_\bullet(W/W_1)$ associated with the chain $(L'_0 \subsetneq \ldots \subsetneq L'_d \subsetneq \varpi^{-1}L'_0)$. To show that $(\beta(L'_0) \subsetneq \ldots \subsetneq \beta(L'_d) \subsetneq \varpi^{-1}\beta(L'_0))$ gives rise to a simplex in $\cB_\bullet(W)_\sigma$, it suffices to furnish a non-zero element in the intersection of the space of sections 
			\begin{equation}\label{E:beta_simp}
				\underset{0 \leq i\leq d}{\bigcap} \h^0(C,\mathcal{E}^\vee_{L'_i} \otimes \widetilde{\mathcal{E}}(I + \epsilon(L'_i)\infty)),
			\end{equation}
			as in the proof of the first part. The intersection above takes place inside $\End(W)$.
			
			Note that we have the following relations \[\mathcal{E}^\vee_{L'_d} \subset \ldots \subset \mathcal{E}^\vee_{L'_0} \text{ and } \ \epsilon(L'_d)-1 \leq \epsilon(L'_0) \leq \ldots \leq \epsilon(L'_d).\]The second set of inequalities result from the definition of the augmentation map. Consequently the intersection in \eqref{E:beta_simp} contains $ \h^0(C,\mathcal{E}^\vee_{L'_d} \otimes \widetilde{\mathcal{E}}(I + (\epsilon(L'_d) -1)\infty))$. From the definition of $\epsilon$ clearly the latter space is a non-zero $\FF_q$-vector space. Hence we can conclude as before that $(\beta(L'_0) \subsetneq \ldots \subsetneq \beta(L'_d) \subsetneq \varpi^{-1}\beta(L'_0))$ determines a simplex in $\cB(W)_\sigma$. 
		\end{proof}  
		
		We denote the induced simplicial map from $\beta$ by $[\beta]:\cB_\bullet(W/W_1) \rightarrow \cB(W)_{\sigma}$.  
		
		Now define a new map 
		\[\hat{\epsilon}: \BB(W)_\sigma \rightarrow \ZZ\]as the composition $\BB(W)_\sigma \xrightarrow{\alpha} \BB(W/W_1) \xrightarrow{\epsilon} \ZZ$. Since $\alpha$ is order preserving and $\ZZ$-equivariant, $\hat{\epsilon}$ is an augmentation map.
		
		Continuing the line of thought from the proof of \cite[Thm.4.1]{grayson}, we introduce for any integer $n \in \ZZ$, a map of posets \[G_n:\BB(W)_{\sigma} \rightarrow \BB(W),\] by \[L \mapsto L + \varpi^{-n-\hat\epsilon(L)}\widetilde{L}.\] 
		
		\underline{\textit{Claim 2.}} For any simplex $\gamma \in \cB_\bullet(W)$ and any $n \in \ZZ$, there is an inclusion of subgroups \[\{g \in \Gamma_\gamma| g_{|W_1}=1_{W_1}\} \subset \{g \in \Gamma_{G_n(\gamma)}| g_{|W_1}=1_{W_1}\}.\] Consequently $G_n$ has target inside $\BB(W)_{\sigma}$ and induces a simplicial map, denoted by $[G_n]:\cB_\bullet(W)_{\sigma} \rightarrow \cB_\bullet(W)_{\sigma}$.
		
		\begin{proof}
			Let $\gamma=\{L_0 , \ldots, L_d\}$ and $g \in \Gamma_\gamma \setminus \{1\}$ be given such that $g_{|W_1}=1_{W_1}$. Since $K\widetilde{L}=W_1$, such a $g$ also stabilizes each of $G_n(L_i)$, for $0 \leq i \leq d$. Hence the required inclusion of subgroups. 
		\end{proof}

		From the Definition \ref{D:adj_2}, we can check that $G_n$ and $G_{n+1}$ are adjacent maps with respect to $\hat{\epsilon}$. Indeed, condition \ref{D:adj_2}(i) follows since for any $L$, $G_n(L) \subset G_{n+1}(L)$. Also if $L \subsetneq M$ are two $R$-lattices such that $\hat\epsilon(L) < \hat\epsilon(M)$, then $G_{n+1}(L) = L + \varpi^{-n-1-\hat\epsilon(L)}\widetilde{L} \subset M + \varpi^{-n-\hat\epsilon(M)}\widetilde{L}$, hence condition \ref{D:adj_2}(ii) is also satisfied. Proposition \ref{P:adj_2} shows then that the geometric realizations of $\langle G_n\rangle $ and $\langle G_{n+1}\rangle$ are homotopic to each other. But we will need a homotopy between the realizations of $[G_n]$ and $[G_{n+1}]$, which can be obtained by following the proof of the same result as follows.

		\underline{\textit{Claim 3.}} $|[G_n]|$ and $|[G_{n+1}]|$ are homotopic to each other.  
		
		\begin{proof}
			Let $(\BB(W)_{\sigma,0})_\bullet$ be the simplicial subcomplex of $\BB(W)_{\sigma}$, whose set of vertices is given by $\hat\epsilon^{-1}(0)$ and a chain $(L_0 \subset \ldots \subset L_d)$ is defined to be a $d$-simplex, if $\gamma=\{\langle L_0\rangle \ldots , \langle L_d\rangle\} $ is a $d$-simplex in $\cB_\bullet(W)_\sigma$ and so in particular $\exists g \in \Gamma_\gamma \setminus \{1\}$ such that $g_{|W_1}=1_{W_1}$. Similar to Remark \ref{R:aug_map}, we obtain the natural isomorphism $(\BB(W)_{\sigma,0})_\bullet \simeq \cB_\bullet(W)_{\sigma}$. Denote the composition of $[G_n]$ resp. $[G_{n+1}]$ with this isomorphism by $[G_n]_0$ resp. $[G_{n+1}]_0$. For a $d$-simplex $\gamma=(L_0 \subset \ldots \subset L_d)$ in $(\BB(W)_{\sigma,0})_\bullet$, we have $\hat\epsilon(L_d)=0 <1=\hat\epsilon(\varpi^{-1}L_0)$. 
		It follows that for any $0 \leq i \leq d$,
		\[\tilde{\gamma} = (G_n(L_0) \subset \ldots  G_n(L_i) \subset G_{n+1}(L_i) \subset \ldots G_{n+1}(L_d))) \]
		is a $(d+1)$-simplex in $(\BB(W)_{\sigma,0})_\bullet$, because $G_n$ and $G_{n+1}$ are adjacent, we have $G_{n+1}(L_d) \subsetneq G_{n}(\varpi^{-1}L_0)$ and also by Claim 2, if $g \in \Gamma_{\gamma} \setminus \{1\}$, then $g \in \Gamma_{G_i(L_j)} \setminus \{1\}$ for $i=n,n+1$ and for all $0 \leq j \leq d$, and hence $g \in \Gamma_{\tilde{\gamma}} \setminus \{1\}$. 
Consequently $[G_n]_0$ and $[G_{n+1}]_0$ are adjacent in the sense of Definition~\ref{D:adj_1}, and we conclude from Proposition \ref{P:adj_1} that the geometric realizations $|[G_n]_0|$ and $|[G_{n+1}]_0|$ are homotopic, and hence that $|[G_n]|$ and $|[G_{n+1}]|$ are homotopic.
%
		\end{proof}

		Now the rest of the proof of the theorem is exactly as in \cite[Thm.4.1]{grayson}. For any $R$-lattice $L \subset W$ of maximal rank, we have \[G_n(L):= \begin{cases}
			L;\  n \ll 0 \\
			G_n \beta\alpha(L); \ n\gg 0
		\end{cases}
		.\] We can now finish the proof using Whitehead's theorem(Corollary \ref{C:cor_equi_white}) in the standard, non-equivariant form; cf.~Remark~\ref{R:non-equi-whitehead}. Let $f:Z \rightarrow |\cB_{\bullet}(W)_{\sigma}|$ be a continuous map from a compact topological space $Z$. Then its image is in the span of finitely many vertices of $\cB_{\bullet}(W)_{\sigma}$. Hence by above, we have \[
		[G_n] \circ f = \begin{cases}
			f;\ n\ll 0\\
			[G_n] \circ [ \beta ] \circ [\alpha] \circ f; \ n\gg0
		\end{cases}
		.\] 
		Consequently $|f|$ is homotopic to $|[G_n]| \circ |[ \beta ]| \circ |[\alpha]| \circ |f|$ by claim 3. Since $|[\alpha]| \circ |f|$ is null-homotopic(since the Bruhat-Tits building is contractible see \cite[Thm.2.1]{grayson} for a proof), $|[G_n]| \circ |[ \beta ]| \circ |[\alpha]| \circ |f|$ is also null-homotopic, and we can conclude that $|f|$ is null-homotopic. By Whitehead's theorem, $|\cB_{\bullet}(W)_{\sigma}|$ is contractible. 
	\end{proof}

	\subsection{The $\Gamma_P$-equivariant homotopy $\cB_\bullet(W)^{\Gamma\dashun} \rightarrow \mathcal{T}_\bullet(W)$}  
	
	Define the $\Gamma$-unstable subcomplex of $\cB_\bullet(W)$, denoted $\cB_\bullet(W)^{\Gamma\dashun}$, as follows. A simplex $\gamma=\{\langle L_0\rangle,\ldots,\langle L_d\rangle\}$ is in $\cB_\bullet(W)^{\Gamma\dashun}$ if and only if $\Gamma_\gamma \neq \{1\}$.  
	
	Recall that $\Gamma_P=\Aut_A(P)$. Analogous to \cite[Cor.4.2]{grayson}, we would like to use the previous Theorem \ref{T:un_contr} to conclude a $\Gamma_P$-equivariant homotopy equivalence between $\cB_\bullet(W)^{\Gamma\dashun}$ and $\mathcal{T}_\bullet(W)$. The strategy to construct the sought after $\Gamma_P$-equivariant homotopy equivalence is to use Theorem \ref{T:lem1.9}. In the notation of that theorem, we apply it as follows. Set $X:=\cB_\bullet(W)^{\Gamma\dashun}$ and $\mathcal{T}:=\mathcal{T}_\bullet(W)$. For $\sigma \in \mathcal{T}$, define $X_\sigma:=\cB_\bullet(W)_{\sigma}$. 
	
	In this setting \ref{T:lem1.9}(i) is obvious and \ref{T:lem1.9}(iii) follows from Theorem \ref{T:un_contr}. Also \ref{T:lem1.9}(ii) follows from the following standard fact from group representation theory over positive characteristic fields. 
	
	\begin{lem}(cf. \cite[Lem.38]{kondo})\label{L:basic_lemma}
		Let $G$ be a finite $p$-group acting on a finite dimensional $K$-vector space $V$. Then $V^G \neq \{0\}$. 
	\end{lem}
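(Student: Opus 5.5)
The plan is the classical counting argument for $p$-groups acting on a finite set of size prime to $p$, carried out on a finite $\FF_p$-model of a nonzero piece of $V$.

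Since $K$ has characteristic $p$, its prime field is $\FF_p$. First I replace $V$ by a finite set on which $G$ acts. Pick any nonzero $v \in V$. Its $G$-orbit $Gv$ is finite because $G$ is finite. Let $M$ be the $\FF_p$-span of $Gv$ inside $V$. Then:
\begin{itemize}
\item $M$ is a finite-dimensional $\FF_p$-vector space, since it is spanned by the finitely many vectors $gv$;
\item $M$ is therefore a finite set of cardinality $p^m$ with $m \geq 1$, since $v \neq 0$;
\item $M$ is $G$-stable, because $G$ acts $K$-linearly, in particular $\FF_p$-linearly, and permutes the spanning set $Gv$.
\end{itemize}
So $G$ acts on the finite set $M$ by permutations fixing $0$.

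Next I decompose $M$ into $G$-orbits. Each orbit has size $[G:G_x]$ for the stabilizer $G_x$. Because $G$ is a $p$-group, every orbit has size a power of $p$: either $1$ or divisible by $p$. The orbits of size $1$ are exactly the elements of $M^G$. Hence
\[
|M| \equiv |M^G| \pmod p.
\]
Since $|M| = p^m \equiv 0 \pmod p$, we get $|M^G| \equiv 0 \pmod p$. As $0 \in M^G$, we have $|M^G| \geq 1$, so $|M^G| \geq p \geq 2$. Thus $M^G$ contains a nonzero element, which lies in $V^G$, and therefore $V^G \neq \{0\}$.

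The only point needing care is the reduction to a finite set, since $K$ itself is infinite. Taking the $\FF_p$-span of a single orbit handles this, and it uses finiteness of $G$ together with linearity of the action. An alternative route would be induction on $|G|$ via a central element of order $p$ and the unipotence of $g-1$ satisfying $(g-1)^{p^k}=0$. The counting argument is shorter, so I would use that.
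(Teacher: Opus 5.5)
Your proof is correct, but it takes a different route from the paper's.

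\textbf{The paper's route.} It argues by induction on $|G|$. For cyclic $G=\langle g\rangle$, the relation $g^{p^n}=1$ gives $(g-1)^{p^n}=g^{p^n}-1=0$ in characteristic $p$, so $g$ is unipotent and has a nonzero fixed vector. For general $G$, it picks a nontrivial central element $g$. Then $V^{\langle g\rangle}\neq\{0\}$ is $G$-stable and carries an action of the smaller $p$-group $G/\langle g\rangle$, and induction finishes the argument.

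\textbf{Your route.} You apply the orbit-counting congruence $|M|\equiv|M^G|\pmod p$ once, on the finite $G$-stable $\FF_p$-span of a single orbit. This uses less than the paper does: no induction, no eigenvalues, and no finite-dimensionality over $K$. It also avoids the nontriviality of the center of a $p$-group. That fact is itself usually proved by exactly your counting argument applied to the conjugation action, so the paper's proof in effect contains your argument plus an extra inductive layer. Your argument also works verbatim for any $\FF_p$-linear action of a finite $p$-group on a nonzero $\FF_p$-vector space.

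\textbf{What the paper's route buys.} It is more linear-algebraic and shows directly that each element of $G$ acts unipotently. This fits the setting where the lemma is applied: by Corollary~\ref{C:prin_stab}, the stabilizers there already consist of elements $1+n$ with $n$ nilpotent.

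Finally, like the paper, you tacitly need $V\neq\{0\}$ to pick $v$. The statement requires this, and it holds in the application $V=W$.
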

	
	\begin{proof}
		We induct on the size of $G$. Suppose $G=\langle g \rangle$ is a cyclic group generated by $g$. Then $g^{p^n}=1$ for some $n \in \ZZ$. Consequently, the characteristic polynomial of $g$ is of the form $X^{p^n}-1$, so the only eigenvalue for the action of $g$ on $V$ is $1$. Hence $V^{\langle g \rangle} \neq \{0\}$.
		
		Now let $G$ be an arbitrary $p$-group. It is well known fact that the centralizer $C_G \neq \{1\}$, so pick a $g \in C_G$. Then $\langle g \rangle \subset G$ is a normal subgroup and $G/\langle g\rangle$ acts on $V^{\langle g \rangle}$. By induction hypothesis, we have $V^G=(V^{\langle g\rangle})^{G/\langle g\rangle} \neq \{0\}$.  
	\end{proof}
	
	Now \ref{T:lem1.9}(ii) follows in our setting: Given any simplex $\gamma$ in $\cB_{\bullet}(W)^{\Gamma\dashun}$ we have $\Gamma_{\gamma} \neq \{1\}$. From Corollary~\ref{C:prin_stab} and Lemma~\ref{L:basic_lemma}, we deduce $\{0\} \subsetneq W^{\Gamma_\gamma} \subsetneq W$. Hence $\{0\} \subsetneq W^{\Gamma_\gamma} \subsetneq W$ determines a vertex $\sigma \in \mathcal{T}_\bullet(W)$ and $\gamma$ is a simplex of $\cB_\bullet(W)_{\sigma}$.

	Finally one needs to show that given a simplex $\gamma \in \cB_\bullet(W)^{\Gamma\dashun}$, the poset \[\mathcal{T}_\bullet(W)_\gamma:=\{\sigma \in \mathcal{T}_\bullet(W) | \ \gamma \text{ is a simplex of }\cB_\bullet(W)_{\sigma}\}\] is $(\Gamma_P)_\gamma$-contractible. Since $\Gamma_\gamma \trianglelefteq (\Gamma_P)_\gamma$ is a normal subgroup, we will achieve this by considering the $(\Gamma_P)_\gamma$-map of posets 
	\[\mathcal{T}_\bullet(W)_{\gamma} \xrightarrow{\phi} \mathcal{T}_\bullet(W)_\gamma\] defined by \[\sigma=(0 \subsetneq W' \subsetneq W) \mapsto (0 \subsetneq W'+W^{\Gamma_\gamma} \subsetneq W).\]This is a well defined map since, $W'+W^{\Gamma_\gamma}$ is a proper subspace of $W$ as any $g \in \Gamma_\gamma$ stabilizes $W'+W^{\Gamma_\gamma}$ and hence it must not be the whole vector space $W$. 
	Consequently, since $\phi(\sigma) \geq \sigma$ for all $\sigma \in \mathcal{T}_\bullet(W)_\gamma$, by Corollary \ref{C:quillen1.3_2} the poset $\mathcal{T}_\bullet(W)_\gamma$ is $(\Gamma_P)_\gamma$-homotopy equivalent to the poset $\im(\phi)$. But $\im(\phi)$ has a minimal element, namely $W^{\Gamma_\gamma}$, which is also $(\Gamma_P)_\gamma$-invariant. Hence by Corollary \ref{C:quillen1.3_1} $\im(\phi)$ is $(\Gamma_P)_\gamma$-contractible, or $\mathcal{T}_\bullet(W)_\gamma$ is also $(\Gamma_P)_\gamma$-contractible. 
	
	To summarize the discussion in this section, we have
	
	\begin{thm}\label{T:main_thm_2}
		There is a natural $\Gamma_P$-equivariant homotopy equivalence between $|\cB_\bullet(W)^{\Gamma\dashun}|$ and $|\mathcal{T}_\bullet(W)|$. 
	\end{thm}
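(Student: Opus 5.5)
The plan is to apply Proposition~\ref{T:lem1.9} with $G=\Gamma_P$, $X:=\cB_\bullet(W)^{\Gamma\dashun}$, $\mathcal{T}:=\mathcal{T}_\bullet(W)$ (the poset of proper non-zero subspaces), and $X_\sigma:=\cB_\bullet(W)_\sigma$ for $\sigma=(W_1)$. First I check that this is a legitimate $\Gamma_P$-equivariant setup.

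Since $\Gamma=\Gamma_P(I)\trianglelefteq\Gamma_P$, for $h\in\Gamma_P$ we have $\Gamma_{h\gamma}=h\Gamma_\gamma h^{-1}$. Hence $X$ is a $\Gamma_P$-subcomplex of $\cB_\bullet(W)$, and $h\,\cB_\bullet(W)_\sigma=\cB_\bullet(W)_{h\sigma}$: if $g\in\Gamma_\gamma\setminus\{1\}$ restricts to the identity on $W_1$, then $hgh^{-1}\in\Gamma_{h\gamma}\setminus\{1\}$ restricts to the identity on $hW_1$. Each $X_\sigma$ is indeed a subcomplex of $X$, because faces of a simplex have larger stabilizers.

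With this in place, the four hypotheses are handled as follows. Condition (i) holds because $W_1\subset W_1'$ means an element acting trivially on $W_1'$ also acts trivially on $W_1$, so $X_{(W_1')}\subset X_{(W_1)}$. Condition (ii) follows from Corollary~\ref{C:prin_stab} together with Lemma~\ref{L:basic_lemma}. For an unstable $\gamma$, $\Gamma_\gamma$ is a non-trivial finite $p$-group, so $W^{\Gamma_\gamma}\neq 0$, and $W^{\Gamma_\gamma}\neq W$ since $\Gamma_\gamma$ acts faithfully. Thus $\gamma\in X_{(W^{\Gamma_\gamma})}$. Condition (iii) is Theorem~\ref{T:un_contr}. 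Condition (iv) requires $\mathcal{T}_\gamma=\{\sigma: \gamma\in X_\sigma\}$ to be $(\Gamma_P)_\gamma$-contractible. For this I use the monotone $(\Gamma_P)_\gamma$-map $\phi(W')=W'+W^{\Gamma_\gamma}$, which is equivariant because $\Gamma_\gamma\trianglelefteq(\Gamma_P)_\gamma$ makes $W^{\Gamma_\gamma}$ invariant. Corollary~\ref{C:quillen1.3_2} then reduces $\mathcal{T}_\gamma$ to $\im(\phi)$, which has the invariant minimum $W^{\Gamma_\gamma}$, and Corollary~\ref{C:quillen1.3_1} finishes. Proposition~\ref{T:lem1.9} then yields the natural $\Gamma_P$-equivariant homotopy equivalence $|X|\simeq|\mathcal{T}|$ via $X\xleftarrow{p_1}Z\xrightarrow{p_2}\mathcal{T}$.

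The main obstacle is verifying that $\phi$ really lands in $\mathcal{T}_\gamma$. One must show that $W'+W^{\Gamma_\gamma}$ is proper, and also that $\gamma\in X_{\phi(\sigma)}$, i.e.\ that some non-trivial $g\in\Gamma_\gamma$ fixes $W'+W^{\Gamma_\gamma}$ pointwise. The naive "any $g\in\Gamma_\gamma$ stabilizes it" does not suffice. I would fix this by taking $g\in\Gamma_\gamma\setminus\{1\}$ with $g|_{W'}=1$, which exists since $\sigma\in\mathcal{T}_\gamma$. This $g$ is also trivial on $W^{\Gamma_\gamma}$, hence trivial on the sum. Since $g\ne1$, the sum is a proper subspace, and the same $g$ witnesses $\gamma\in X_{\phi(\sigma)}$.

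The remaining check is that $W^{\Gamma_\gamma}\in\mathcal{T}_\gamma$ is the minimum of $\im(\phi)$, which requires a non-trivial element acting trivially on $W^{\Gamma_\gamma}$; any element of $\Gamma_\gamma\setminus\{1\}$ works. Finally, the result is only a $\Gamma_P$-equivariant homotopy equivalence, not a $\Gamma_P$-homotopy equivalence, because Theorem~\ref{T:un_contr} gives only non-equivariant contractibility of $X_\sigma$. That is exactly what the statement asserts.
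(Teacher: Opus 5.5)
Your proposal is correct and follows the paper's proof essentially step for step. Like the paper, you apply Proposition~\ref{T:lem1.9} to $X=\cB_\bullet(W)^{\Gamma\dashun}$ with $X_\sigma=\cB_\bullet(W)_\sigma$, obtaining (ii) from Corollary~\ref{C:prin_stab} and Lemma~\ref{L:basic_lemma}, (iii) from Theorem~\ref{T:un_contr}, and (iv) via $\phi(W')=W'+W^{\Gamma_\gamma}$ together with Corollaries~\ref{C:quillen1.3_2} and~\ref{C:quillen1.3_1}. Two of your checks are in fact more careful than the paper's. First, your argument that $\phi$ lands in $\mathcal{T}_\bullet(W)_\gamma$ uses a nontrivial $g\in\Gamma_\gamma$ trivial on $W'$; it is then trivial on the whole sum, which is therefore proper and witnesses $\gamma\in\cB_\bullet(W)_{\phi(\sigma)}$. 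This is more accurate than the paper's remark that elements of $\Gamma_\gamma$ stabilize $W'+W^{\Gamma_\gamma}$. Second, your verification that $h\,\cB_\bullet(W)_\sigma=\cB_\bullet(W)_{h\sigma}$ supplies the equivariance that Proposition~\ref{T:lem1.9} tacitly requires.
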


	\begin{rem}\label{R:main_thm_2}
	    \begin{itemize}
	    
		\item[(i)] Since $\cB_\bullet(W)^{\Gamma\dashun}$ and $\mathcal{T}_\bullet(W)$ are simplicial complexes, it follows that their corresponding (reduced)simplicial homology groups of the geometric realizations are also naturally isomorphic. In particular, for all $i\geq0$ there is an isomorphism $\tilde{\h}_i(\cB_\bullet(W)^{\Gamma\dashun},\ZZ) \simeq \tilde{\h}_i(\mathcal{T}_\bullet(W),\ZZ)$ that is also $\Gamma_P$ equivariant (the former is contained in any standard text on algebraic topology, eg. \cite[Chap.2, \S19]{Mun84}, and the latter follows from the $\Gamma_P$-equivariance of the homotopy equivalence). 
		\item[(ii)] It is worth highlighting that when $r=2$ the homotopy equivalence in Theorem \ref{T:main_thm_2} is not a $\Gamma_P$ homotopy equivalence(i.e., there do not exist a $\Gamma_P$-equivariant homotopy inverse $|\mathcal{T}_\bullet(W)| \rightarrow |\cB_\bullet(W)^{\Gamma\dashun}|$. See Definition \ref{D:homotopy_equi}(ii)) and we expect the same to hold true in higher ranks as well. Indeed to see this fact in $r=2$ case, recall that $\mathcal{T}_\bullet(W) = \mathbb{P}(W)$ and note by (i) that an inverse $\Gamma_P$-equivariant homotopy inverse $|\mathbb{P}(W)| \rightarrow |\cB_\bullet(W)^{\Gamma\dashun}|$ would result in a $\Gamma_P$ equivariant map $g:\C_0(\mathbb{P}(W),\ZZ) \rightarrow \C_0(\cB_\bullet(W)^{\Gamma\dashun},\ZZ)$. In particular, for each $c \in \mathbb{P}(W)$, $g([c]) = \underset{x \in \cB_0(W)^{\Gamma\dashun}}{\sum} n_x[x]$, where $n_x \in \ZZ$ are almost all zero. The stabilizer $(\Gamma_P)_c$ has infinite cardinality(see \cite[Chap.2,\S2.5]{serre}) whereas the (finitely many) vertices occuring in the sum $g([c])$ have finite $\Gamma_P$ stabilizers. Hence the $\Gamma_P$-equivariance of $g$ means that $g\equiv 0$. But this is a contradiction since $\C_\bullet(\mathbb{P}(W),\ZZ)$ is not contractible.
		\end{itemize}
 	\end{rem}
	\section{Relating the various (un)stable complexes}
	
	In this section, we will be interested in understanding the compatibility of the isomorphisms in Theorem \ref{T:main_thm_2} when $\Gamma$ runs over $\Gamma_P(I)$ for $I$ any proper non-zero ideal of $A$. In particular, we will show a compatible family of isomorphisms between the homology of the various $\Gamma_P(I)$-\textit{stable complexes}(defined below) with the Steinberg module. Let $\Gamma:=\Gamma_P(I)$ as before.  First we will explicitly describe a simplicial homological complex for the building $\cB_\bullet(W)$ which consequently also defines it for $\cB_\bullet(W)^{\Gamma\dashun}$. Let us fix a $K$-basis $\{e_1,\ldots,e_r\}$ of $W$ and by $type(x)$ for a vertex $x \in \cB_0(W)$ we mean the type with respect to this fixed basis; see~Remark~\ref{R:type_i_vertex}. 
	
\begin{defi}\label{D:Complex_BT}
We denote by $(\C_i(\cB_\bullet(W),\ZZ),\partial_i)$ the homological complex which is defined as follows:
    \begin{itemize}
        \item[(i)] $\C_i(\cB_\bullet(W),\ZZ) = 0$ for all $i>r$ and $i<-1$;
        \item[(ii)] $\C_{-1}(\cB_\bullet(W),\ZZ) = \ZZ$
        \item[(iii)] For $0 \leq i \leq r$, \[ \C_i(\cB_\bullet(W),\ZZ) := \underset{\substack{\{x_0,\ldots,x_i\} \in \cB_i(W) \\ type(x_0) < \ldots < type(x_i)}}{\bigoplus} \ZZ [x_0,\ldots,x_i],\] where for a simplex $s=\{x_0,\ldots,x_i\} \in \cB_i(W)$, we denote by $[s]=[x_0,\ldots,x_i]$ the corresponding basis symbol in $\C_i(\cB_\bullet(W),\ZZ)$.  
        \item[(iv)] For $1 \leq i \leq r$, the differentials are given by
            \[\partial_i:\C_i(\cB_\bullet(W),\ZZ) \rightarrow \C_{i-1}(\cB_\bullet(W),\ZZ)\] sending \[[x_0,\ldots,x_i] \mapsto \underset{0 \leq j \leq i}{\sum} (-1)^j[x_0,\ldots,\widehat{x_j},\ldots,x_i].\]$\partial_0$ sends $[x]$ to $1$, for a vertex $x \in \cB_0(W)$.
    \end{itemize}

\end{defi}

\begin{rem}
    \begin{itemize}
    	\item[(i)] The homology of the complex $(\C_i(\cB_\bullet(W),\ZZ),\partial_i)$ is the reduced homology of the space $|\cB_\bullet(W)|$, so we will denote it by $\tilde{\h}_{\bullet}(\cB_\bullet(W),\ZZ)$.  
        \item[(ii)] As already alluded to before, $|\cB_\bullet(W)|$ is contractible, that is, $\tilde{\h}_i(\cB_\bullet(W),\ZZ) = 0$ for all $i \neq 0$.
        \item[(iii)] It is clear that the complex $\C_\bullet(\cB_\bullet(W),\ZZ)$ is equipped with the obvious $\GL(W)$-action making it a complex of $\ZZ[\GL(W)]$-modules.
    \end{itemize}
\end{rem}

Denote by $\C_\bullet(\cB_\bullet(W)^{\Gamma\dashun},\ZZ) \subset \C_\bullet(\cB_\bullet(W),\ZZ)$ the subcomplex involving only the $\Gamma$ unstable simplices and then its homology will be denoted by $\tilde{\h}_{\bullet}(\cB_\bullet(W)^{\Gamma\dashun},\ZZ)$.   
	
	\begin{defi}\label{D:stable_complex}
 Define \[C_\bullet(\mathcal{B}_\bullet(W),\ZZ)^{\Gamma\dashst}:= \coker(C_\bullet(\mathcal{B}_\bullet(W)^{\Gamma\dashun},\ZZ) \hookrightarrow C_\bullet(\mathcal{B}_\bullet(W),\ZZ)).\]The homology of the complex $C_\bullet(\mathcal{B}_\bullet(W),\ZZ)^{\Gamma\dashst}$ will be denoted by $\h_\bullet(\mathcal{B}_\bullet(W)^{\Gamma\dashst},\ZZ)$. 
		
	\end{defi}
	

	\begin{rem}\label{R:stable_complexes1}
	The set of stable simplices does not form a simplicial complex. Therefore, a priori it is not clear whether $C_\bullet(\mathcal{B}_\bullet(W),\ZZ)^{\Gamma\dashst}$ can be defined as the complex associated to a subset of simplices of $\cB_\bullet(W)$. To clarify this, let $\cB_d(W)^{\Gamma\dashst}:=\cB_d(W)\setminus \cB_d(W)^{\Gamma\dashun}$ for $d \geq 0$, so that $\cB_d(W)^{\Gamma\dashst}$ consists of those $d$-simplices with trivial $\Gamma$-stabilizer. Let $C_d(\cB_\bullet(W)^{\Gamma\dashst},\ZZ)$ be the free $\ZZ$-module on generators $[s]$ for $s\in \cB_d(W)^{\Gamma\dashst}$, and define the differential $\partial_d:C_d(\cB_\bullet(W)^{\Gamma\dashst},\ZZ) \rightarrow C_{d-1}(\cB_\bullet(W)^{\Gamma\dashst},\ZZ)$ by 
			\[[v_0,\ldots,v_d]_{\st} \mapsto \sum^{d}_{i=0} (-1)^i [v_0,\ldots, \widehat{v_i},\ldots,v_d]_{\st},\] 
where for a simplex $s=\{t_0,\ldots,t_j\}$ the symbol $[s]_{\st} = 0$ if $s$ is $\Gamma$-unstable, and otherwise $[s]_{\st}$ is the symbol $[s]$ defined above. Then as the reader may easily verify, this defines a complex $C_\bullet(\cB_\bullet(W)^{\Gamma\dashst},\ZZ)$ and one has a canonical isomorphism $ C_\bullet(\mathcal{B}_\bullet(W),\ZZ)^{\Gamma\dashst}\to C_\bullet(\cB_\bullet(W)^{\Gamma\dashst},\ZZ)$.
	\end{rem}
	\smallskip
	
%
			Since $\Gamma\trianglelefteq \Gamma_P$ is a normal subgroup, the action of $\Gamma_P$ on $\cB_\bullet(W)$ induces a natural action of $\Gamma_P$ on $\cB_\bullet(W)^{\Gamma\dashst}$. Consequently we have a short exact sequence of $\Gamma_P$-modules \[0 \rightarrow C_\bullet(\cB_\bullet(W)^{\Gamma\dashun},\ZZ) \rightarrow C_\bullet(\cB_\bullet(W),\ZZ) \rightarrow C_\bullet(\cB_\bullet(W),\ZZ)^{\Gamma\dashst} \rightarrow 0,\]
			giving rise to a long exact sequence of $\Gamma_P$-modules 
			\begin{equation}\label{eq:les-st-ust}\ldots \!\rightarrow\! \tilde{\h}_i(\cB_\bullet(W)^{\Gamma\dashun},\ZZ) \!\rightarrow \!\tilde{\h}_i(\cB_\bullet(W),\ZZ)\! \rightarrow\! \h_i(\cB_\bullet(W)^{\Gamma\dashst},\ZZ)\! \rightarrow \!\tilde{\h}_{i-1}(\cB_\bullet(W)^{\Gamma\dashun},\ZZ) \!\rightarrow\! \ldots 
			\end{equation}
	
	Because the Bruhat-Tits building $\cB_\bullet(W)$ is contractible, the above long exact sequence together with Remark~\ref{R:main_thm_2} yields isomorphisms of $\Gamma_P$-modules 
	\begin{equation}\label{eq:iso-st-ust}
	\h_{i-1}(\cB_\bullet(W)^{\Gamma\dashst},\ZZ) \stackrel{\eqref{eq:les-st-ust}}\simeq \tilde{\h}_{i-2}(\cB_\bullet(W)^{\Gamma\dashun},\ZZ) \stackrel{\mathrm{Thm.}\ref{T:main_thm_2}}\simeq \tilde{\h}_{i-2}(\mathcal{T}_\bullet(W),\ZZ) 
	\end{equation}
	for all $i\ge2$, and the vanishing $\h_{0}(\cB_\bullet(W)^{\Gamma\dashst},\ZZ) =0$. Moreover as recalled in Theorem~\ref{solomon_quillen}, one also has $\tilde{\h}_{i}(\mathcal{T}_\bullet(W),\ZZ)=0 $ unless $i=r-2$ and $\tilde{\h}_{r-2}(\mathcal{T}_\bullet(W),\ZZ)=\St(W)$. Hence the augmented complex 
		\begin{equation}\label{eq:Augm-Cpx}0\to \h_{r-1}(\cB_\bullet(W)^{\Gamma\dashst},\ZZ)\to  C_\bullet(\cB_\bullet(W),\ZZ)^{\Gamma\dashst} 
	\end{equation}
	 is acyclic. This motivates the following definition:
	\begin{defi}
		The Steinberg module for level $I$ is defined as
		\[\St^{I}(P):=\h_{r-1}(\cB_\bullet(W)^{\Gamma\dashst},\ZZ),\]
		and the isomorphism $St^{I}(P) \simeq \St(W)$ of $\Gamma_P$-modules from \eqref{eq:iso-st-ust} is denoted by $\GQ_I$.
	\end{defi}	
%
	
\begin{lem}\label{lem:finitelyManyOrbits}
Each set $\cB_d(W)^{\Gamma\dashst}$ is a finite union of free $\Gamma$-orbits. 
\end{lem}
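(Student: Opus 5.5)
Two things have to be shown: that $\Gamma$ acts freely on $\cB_d(W)^{\Gamma\dashst}$, and that there are only finitely many $\Gamma$-orbits.

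\emph{Freeness.} This is essentially the definition. A simplex $s\in\cB_d(W)^{\Gamma\dashst}$ has trivial stabilizer $\Gamma_s$, so every $\Gamma$-orbit in $\cB_d(W)^{\Gamma\dashst}$ is free. Since $\Gamma\trianglelefteq\Gamma_P$, the set $\cB_d(W)^{\Gamma\dashst}$ is stable under $\Gamma$, so it is a disjoint union of free $\Gamma$-orbits.

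\emph{Finiteness.} I would first reduce to finiteness of $\Gamma_P\backslash\cB_d(W)$. Because $\Gamma\subset\Gamma_P$ has finite index, each $\Gamma_P$-orbit splits into at most $[\Gamma_P:\Gamma]$ orbits of $\Gamma$. So it suffices to show that $\Gamma_P$ has only finitely many orbits on $d$-simplices.

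Passing to the avatar $\Chi_\bullet(P)$ of Lemma~\ref{L:isom_BT}, a vertex is the class of a bundle $\mathcal{E}(P,L)$. Two lattices $L,L'$ lie in the same $\Gamma_P$-orbit exactly when $\mathcal{E}(P,L)\simeq\mathcal{E}(P,L')$ by an isomorphism that is the identity on generic fibres up to $\Aut_A(P)$. More usefully, every rank $r$ bundle on $C$ whose restriction to $\Spec(A)$ is isomorphic to $\widetilde P$ arises this way, up to twist. Hence $\Gamma_P\backslash\cB_0(W)$ injects into the set of isomorphism classes of such bundles modulo twisting by $\mathcal{O}(n)$.

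The main obstacle is that this set is \emph{infinite}: there are unstable bundles of arbitrarily large Harder--Narasimhan slope gaps. So the reduction fails for arbitrary simplices, and I must use stability. The plan is to show that a $\Gamma$-stable simplex has vertices whose bundles have bounded instability.
\begin{itemize}
\item If a vertex $\langle\mathcal{E}\rangle$ of $s$ is very unstable, there is a saturated subbundle $\mathcal{F}\subset\mathcal{E}$ and a quotient $\mathcal{G}$ with $\mu_{\min}(\mathcal{F})-\mu_{\max}(\mathcal{G})$ large.
\item Then $H^0(C,\mathcal{G}^\vee\otimes\mathcal{F}(I))$ becomes nonzero by Riemann--Roch, once the gap exceeds a constant depending on $g(C)$, $\deg I$ and $r$. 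It remains nonzero after twisting down by $\mathcal{O}(-1)$, which handles the other vertices of $s$, since they all lie between $\mathcal{E}(-1)$ and $\mathcal{E}$.
\item A nonzero element there gives a nontrivial $1+n\in\Gamma_s$ by Corollary~\ref{C:prin_stab}, arguing as in Claim~1 of Theorem~\ref{T:un_contr}.
\end{itemize}

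Thus the vertices of stable simplices have HN-polygons bounded by a constant $c$. Bundles of fixed rank, with degree in a fixed residue range modulo $r$ (after twisting) and bounded HN-polygon, form finitely many isomorphism classes over a finite field, since they have bounded slopes. Hence there are finitely many $\Gamma_P$-orbits of such vertices.

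Finally, each vertex lies in only finitely many simplices: it corresponds to chains of subspaces of $L/\varpi L\cong\FF_{q_\infty}^r$, of which there are finitely many. Therefore stable $d$-simplices fall into finitely many $\Gamma_P$-orbits, and hence into finitely many $\Gamma$-orbits.
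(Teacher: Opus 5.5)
Your proof is correct, but it takes a genuinely different route from the paper. The paper does no reduction theory itself. It quotes the convergence of the mass series $\sum_C 1/|\GL_r(A)_C|$ over $\GL_r(A)$-orbits of chambers (Prop.~13.8 of the cited work of Bux et al.) and concludes that only finitely many $\GL_r(A)$-orbits of chambers have stabilizers of bounded order. It then transfers this to $\Gamma$ by commensurability, since a $\Gamma$-stable simplex has $\GL_r(A)$-stabilizer of order at most $[\GL_r(A):\Gamma\cap\GL_r(A)]$. The passage from chambers to $d$-simplices is left implicit.

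You instead do the reduction theory by hand in the bundle language of \S3. For a vertex $\mathcal{E}$ of $s$ with a large HN gap, consider the square-zero map $n\colon\mathcal{E}\twoheadrightarrow\mathcal{G}\to\mathcal{F}(I)(-1)\hookrightarrow\mathcal{E}(I)(-1)$. It sends every vertex $\mathcal{E}_i$ of $s$, normalized so that $\mathcal{E}(-1)\subset\mathcal{E}_i\subset\mathcal{E}$, into $\mathcal{E}_i(I)$, so $1+n\in\Gamma_s\setminus\{1\}$ by Corollaries~\ref{C:prin_stab} and~\ref{C:ari_grp}. Riemann--Roch produces such an $n$ as soon as $\mu(\mathcal{F})-\mu(\mathcal{G})>g-1+\deg I+\deg\infty$. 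This step is sound because the Euler characteristic only sees the average-slope gap, which dominates $\mu_{\min}(\mathcal{F})-\mu_{\max}(\mathcal{G})$. So you never need semistability of $\mathcal{G}^\vee\otimes\mathcal{F}$, which can fail in characteristic $p$.

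Your approach is self-contained, treats all $d$ uniformly via local finiteness, and gives an explicit instability bound in the spirit of Grayson's \S5. The paper's argument is shorter but outsources all of this to a black box.

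Two points to tighten:
\begin{itemize}
\item The one external input you rely on is the finiteness of isomorphism classes over $\FF_q$ of rank-$r$ bundles with fixed degree and bounded Harder--Narasimhan polygon. This deserves more than ``since they have bounded slopes.'' The usual argument: a fixed twist makes all of them globally generated with $H^1=0$, so they are $\FF_q$-points of a Quot scheme of finite type.
\item Twisting by $\mathcal{O}(1)=\mathcal{O}(\infty)$ shifts degrees by $r\deg\infty$, so the degree residues should be taken modulo $r\deg\infty$, not modulo $r$.
\end{itemize}
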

\begin{proof}
We rely on the proof of \cite[Prop.13.8]{Bux_2013}: Fix a $K$-basis $\{e_i\}^{r}_{i=1}$ of $W$ and consider the setting $S:=\{\infty\},\mathcal{G}:=(\GL_r)_K$ in the notation of op.cit., so that $\mathcal{G}(K)=\GL(W)$ w.r.t. $\{e_i\}^{r}_{i=1}$ and for $\GL^{0}_r(A):=\im(\GL_r(A) \rightarrow \Aut(\cB_\bullet(W)))$, the series \[\underset{C \in \GL_r(A)  \backslash \cB_{r-1}(W)}{\sum} \frac{1}{|\GL^0_r(A)_C|}\]converges by \cite[Prop.13.8]{Bux_2013}. The kernel of $\GL_r(A) \rightarrow \Aut(\cB_\bullet(W))$ is a finite group by Proposition \ref{P:stab-finite}, hence the series \[\underset{C \in \GL_r(A)  \backslash \cB_{r-1}(W)}{\sum} \frac{1}{|\GL_r(A)_C|}\]also converges. In particular, given $M \in \NN$ there are finitely many $\GL_r(A)$ orbit of simplices with the order of the associated $\GL_r(A)$ stabilizer bounded above by $M$. Since $\Gamma$ and $\GL_r(A)$ are commensurable from Lemma \ref{L:commensurable}, we can conclude that there are at most finitely $\Gamma$ orbits of $\Gamma$-stable simplices. 
\end{proof}
	
	It follows from Lemma~\ref{lem:finitelyManyOrbits} and Remark~\ref{R:stable_complexes1} that $C_\bullet(\cB_\bullet(W),\ZZ)^{\Gamma\dashst} $ is a complex of free and finitely generated $\ZZ[\Gamma]$-modules, and from the acyclicity of \eqref{eq:Augm-Cpx} we deduce:
	
		
		\begin{cor}
		The $\Gamma_P$-module $\St(W) $ is projective and finitely~generated  as a $\ZZ[\Gamma]$-module.
		\end{cor}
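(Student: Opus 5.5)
The corollary follows from the acyclic augmented complex \eqref{eq:Augm-Cpx}, read as a finite free resolution that can be split off from the top.

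First, by Remark~\ref{R:stable_complexes1}, each term $C_d(\cB_\bullet(W),\ZZ)^{\Gamma\dashst}$ is the free $\ZZ$-module on the $\Gamma$-stable $d$-simplices. By Lemma~\ref{lem:finitelyManyOrbits}, for each $0\le d\le r-1$ these simplices form finitely many free $\Gamma$-orbits. Choosing one representative per orbit, $C_d(\cB_\bullet(W),\ZZ)^{\Gamma\dashst}$ becomes a finitely generated free $\ZZ[\Gamma]$-module. Orientation signs cause no trouble: the basis symbols are fixed by the type ordering, and elements of $\Gamma\subset\Gamma_P$ preserve types by Remark~\ref{R:type_i_vertex}(i). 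In degree $-1$ the term is $0$, since the constant $\ZZ$ is unstable: the empty simplex has stabilizer all of $\Gamma$. In degrees above $r-1$ the term also vanishes, because $\cB_\bullet(W)$ has rank $r-1$.

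Second, acyclicity of \eqref{eq:Augm-Cpx} gives an exact sequence of $\ZZ[\Gamma]$-modules
\[0\to \St^I(P)\to C_{r-1}^{\Gamma\dashst}\to C_{r-2}^{\Gamma\dashst}\to\cdots\to C_0^{\Gamma\dashst}\to 0.\]
It is exact at $C_0^{\Gamma\dashst}$ by the vanishing $\h_0(\cB_\bullet(W)^{\Gamma\dashst},\ZZ)=0$. It is exact at the intermediate terms by \eqref{eq:iso-st-ust} together with Theorem~\ref{solomon_quillen}: $\h_{i-1}$ is isomorphic to $\tilde{\h}_{i-2}(\mathcal{T}_\bullet(W),\ZZ)$, which vanishes for $i-2\neq r-2$. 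For $r=2$ one should separately check degree $0$, where the relevant group is $\tilde{\h}_{-1}(\cB^{\Gamma\dashun})$; this vanishes because the unstable complex is nonempty.

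Third, split the sequence from the right. The surjection $C_1^{\Gamma\dashst}\to C_0^{\Gamma\dashst}$ onto a free module splits, so its kernel $Z_1$ is a direct summand of $C_1^{\Gamma\dashst}$, hence projective and finitely generated. Inductively, $Z_{d}\to Z_{d-1}$ is surjective by exactness, $Z_{d-1}$ is projective, so the map splits and $Z_d$ is a finitely generated projective summand of $C_d^{\Gamma\dashst}$. At the top, $\St^I(P)=Z_{r-1}$ is therefore finitely generated projective over $\ZZ[\Gamma]$.

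Finally, transport this along $\GQ_I$. Since $\GQ_I$ is a $\Gamma_P$-equivariant isomorphism, it is in particular $\Gamma$-equivariant, so $\St(W)$ is finitely generated projective as a $\ZZ[\Gamma]$-module.

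The only step needing care is confirming the exactness at the low end and the degree $-1$ term, that is, that the augmentation $\ZZ$ really is killed in the stable quotient. Everything else is formal once Lemma~\ref{lem:finitelyManyOrbits} supplies the freeness and finite generation.
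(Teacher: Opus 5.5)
Your proposal is correct and is the paper's argument. Lemma~\ref{lem:finitelyManyOrbits} and Remark~\ref{R:stable_complexes1} make the stable complex a finite complex of finitely generated free $\ZZ[\Gamma]$-modules, and acyclicity of \eqref{eq:Augm-Cpx} then makes $\St^I(P)\cong\St(W)$ a direct summand of $C_{r-1}^{\Gamma\dashst}$, which you simply make explicit by splitting upward from degree $0$. One slip to fix: in the inductive step the surjection that splits is $C_d^{\Gamma\dashst}\to Z_{d-1}$, whose kernel is $Z_d$; the map $Z_d\to Z_{d-1}$ you wrote is the zero map.
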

		
%
%

	Our next result concerns the compatibility of the maps $\GQ_I$ as $I$ runs through non-zero proper ideals of $A$:
	\begin{thm}\label{thm:Thm2}
	Let $J \subset I \subset A$ be a non-zero proper ideal and set $\Gamma' := \Gamma_P(J)$, so that $\Gamma' \trianglelefteq \Gamma \trianglelefteq \Gamma_P$. Then there is a canonical $\Gamma_P$-equivariant homomorphism of abelian groups $i_{J,I} : \St^{J}(P) \rightarrow \St^{I}(P)$ such that the following diagram commutes 
	\begin{equation}\label{E:3}
		\begin{tikzcd}[row sep=tiny]
			\St^{J}(P) \arrow[dd,"i_{J,I}"]
			\arrow[dr,"\GQ_{J}"{near start}
			] &  \\
			& \St(W)   \\
			\St^{I}(P) \arrow[ur,"\GQ_I" '{near start}] & 
		\end{tikzcd}
	\end{equation}
	\end{thm}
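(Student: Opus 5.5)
Since $J\subset I$, we have $\Gamma'=\Gamma_P(J)\subset\Gamma_P(I)=\Gamma$. So every simplex whose $\Gamma'$-stabilizer is nontrivial also has nontrivial $\Gamma$-stabilizer. Thus $\cB_\bullet(W)^{\Gamma'\dashun}\subset\cB_\bullet(W)^{\Gamma\dashun}$ is a $\Gamma_P$-stable subcomplex; $\Gamma_P$-stability holds because both groups are normal in $\Gamma_P$. The inclusion $C_\bullet(\cB_\bullet(W)^{\Gamma'\dashun},\ZZ)\hookrightarrow C_\bullet(\cB_\bullet(W)^{\Gamma\dashun},\ZZ)$ then gives a morphism of short exact sequences of complexes of $\ZZ[\Gamma_P]$-modules. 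The middle term $C_\bullet(\cB_\bullet(W),\ZZ)$ is mapped by the identity, and the induced map on cokernels is the canonical surjection $C_\bullet(\cB_\bullet(W),\ZZ)^{\Gamma'\dashst}\twoheadrightarrow C_\bullet(\cB_\bullet(W),\ZZ)^{\Gamma\dashst}$, which kills the $\Gamma'$-stable but $\Gamma$-unstable simplices. I would define $i_{J,I}$ as the map this surjection induces on $\h_{r-1}$; it is $\Gamma_P$-equivariant by construction. Naturality of the long exact sequence \eqref{eq:les-st-ust} gives that the connecting isomorphisms $\h_{r-1}(\cB_\bullet(W)^{\bullet\dashst},\ZZ)\simeq\tilde\h_{r-2}(\cB_\bullet(W)^{\bullet\dashun},\ZZ)$ commute with $i_{J,I}$ and with the map $\iota_*$ induced by the inclusion of unstable complexes. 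This settles the first half of \eqref{eq:iso-st-ust}.

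What remains is to show that the homotopy equivalences of Theorem \ref{T:main_thm_2} for $\Gamma'$ and $\Gamma$ are compatible with $\iota_*$ on $\tilde\h_{r-2}$. For this I would unwind the construction in Proposition \ref{T:lem1.9}. Write $X^{J}=\cB_\bullet(W)^{\Gamma'\dashun}$ with subcomplexes $X^{J}_\sigma=\cB_\bullet(W)_\sigma$ defined using $\Gamma'$, and similarly $X^{I}$, $X^{I}_\sigma$. Since any nontrivial $g\in\Gamma'_\gamma$ with $g_{|W_1}=1$ also lies in $\Gamma_\gamma$, we get $X^J_\sigma\subset X^I_\sigma$ for every $\sigma\in\mathcal{T}_\bullet(W)$. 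After barycentric subdivision, the posets $Z^J=\{(x,\sigma)\mid x\in X^J_\sigma\}$ and $Z^I$ therefore satisfy $Z^J\subset Z^I$ inside $X^I\times\mathcal{T}$, and the diagram
\[
X^J\xleftarrow{p_1}Z^J\xrightarrow{p_2}\mathcal{T},\qquad X^I\xleftarrow{p_1}Z^I\xrightarrow{p_2}\mathcal{T}
\]
commutes strictly with the inclusions $X^J\hookrightarrow X^I$ and $Z^J\hookrightarrow Z^I$. On the $\mathcal{T}$ side the vertical map is the identity. Both maps $p_1$ are homotopy equivalences, so on homology $(p_1)_*^{-1}$ commutes with the inclusions. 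Hence $\GQ$ on unstable homology, which is $p_{2*}\circ p_{1*}^{-1}$, satisfies $\GQ^{\un}_I\circ\iota_*=\GQ^{\un}_J$. Combined with the first paragraph, this gives $\GQ_I\circ i_{J,I}=\GQ_J$, i.e. the commutativity of \eqref{E:3}.

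The main obstacle I expect is making sure the map $\GQ_I$ is really defined through this specific zigzag. In particular, the homotopy inverse $p_1'$ must only be used up to homotopy, so that the identity holds on homology. One also has to check that the inclusion $Z^J\subset Z^I$ is compatible with the barycentric-subdivision identification and is $\Gamma_P$-equivariant. If one insists on the strict $\Gamma_P$-homotopy inverse $p_1'$, compatibility holds only up to homotopy; but since we pass to homology this suffices. Sign conventions in the connecting maps are the same for $I$ and $J$, so they cancel.
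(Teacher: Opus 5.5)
Your proposal is correct and follows essentially the same route as the paper. Both arguments define $i_{J,I}$ via the surjection $C_\bullet(\cB_\bullet(W),\ZZ)^{\Gamma'\dashst}\twoheadrightarrow C_\bullet(\cB_\bullet(W),\ZZ)^{\Gamma\dashst}$, use naturality of the connecting map, and then compare the zigzags $X\leftarrow Z\to\mathcal{T}$ through the inclusion $Z_{\Gamma'}\subset Z_{\Gamma}$. Your explicit checks that $X^J_\sigma\subset X^I_\sigma$ and that $p_{2*}\circ p_{1*}^{-1}$ commutes with the inclusions on homology supply the details the paper dismisses as ``clear from construction''.
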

\begin{proof}	
	
	Note that any simplex that is $\Gamma'$-unstable is also $\Gamma$-unstable. Hence we have an embedding of complexes, $C_\bullet(\cB_\bullet(W)^{\Gamma'\dashun},\ZZ) \hookrightarrow C_\bullet(\cB_\bullet(W)^{\Gamma\dashun},\ZZ)$ giving rise to the quotient of the corresponding stable complexes  $C_\bullet(\cB_\bullet(W),\ZZ)^{\Gamma'\dashst} \twoheadrightarrow C_\bullet(\cB_\bullet(W),\ZZ)^{\Gamma\dashst}$. This yields natural $\Gamma_P$-equivariant maps $\pi_{J,I}:\tilde{\h}_{r-2}(\cB_\bullet(W)^{\Gamma'\dashun},\ZZ) \rightarrow \tilde{\h}_{r-2}(\cB_\bullet(W)^{\Gamma\dashun},\ZZ)$ as well as $i_{J,I}:\St^{J}(P) \rightarrow \St^I(P)$ such that the following diagram of $\Gamma_P$-modules commutes 
	\begin{equation}\label{E:2}
		\begin{tikzcd}
			\St^{J}(P) \arrow[r] \arrow[d,"i_{J,I}"] 
			& \tilde{\h}_{r-2}(\cB_\bullet(W)^{\Gamma'\dashun},\ZZ)\arrow[d,"\pi_{J,I}"]\\
			\St^{I}(P) \arrow[r]  
			& \tilde{\h}_{r-2}(\cB_\bullet(W)^{\Gamma\dashun},\ZZ)
		\end{tikzcd}
	\end{equation}	
%
Therefore it will suffice 
	to show the commutativity of
	\begin{equation}\label{E:4}
		\begin{tikzcd}[row sep=tiny]
			\tilde{\h}_{r-2}(\cB_\bullet(W)^{\Gamma'\dashun},\ZZ)\arrow[dd,"\pi_{J,I}"]\arrow[dr,"\alpha_{J}"{near start}] &  \\
			& \St(W)   \\
			\h_{r-2}(\cB_\bullet(W)^{\Gamma\dashun},\ZZ) \arrow[ur,"\alpha_I" '{near start}] & 
		\end{tikzcd}
	\end{equation}

	We briefly recall how $\alpha_J$ and $\alpha_I$ arises out of the quasi isomorphism in \ref{T:main_thm_2}, which in turn is invoked from the Theorem \ref{T:lem1.9}. For $\ast \in \{\Gamma',\Gamma\}$, let $\cB_{\bullet}(W)_{1}^{\ast\dashun}$ denote the first barycentric subdivision of $\cB_{\bullet}(W)^{\ast\dashun}$ and $Z_\ast \subset \cB_{\bullet}(W)_{1}^{\ast\dashun} \times \mathcal{T}_\bullet(W)$ be the closed subposet such that $(\gamma,\sigma) \in Z_\ast$ if and only if $\gamma \in \cB_\bullet(W)_\sigma$(note that here the $\cB_\bullet(W)_\sigma$ are defined with respect to the corresponding $\ast$. The reader is referred to Subsection 4.1 for details on the definition of the subcomplex $\cB_\bullet(W)_\sigma$). Then there is a correspondence of the homological complexes 
	
	\begin{equation}\label{fig:6}
		\begin{tikzcd}
			&  & C_\bullet(Z_\ast,\ZZ)\arrow[dl,"p_{1,\ast}" '{auto}] \arrow[dr,"p_{2,\ast}"{auto}] &  \\
			C_{\bullet}(\cB_\bullet(W)^{\ast\dashun},\ZZ) \arrow[r,"\sim"] & C_{\bullet}(\cB_{\bullet}(W)_1^{\ast\dashun},\ZZ) & & C_\bullet(\mathcal{T}_\bullet(W),\ZZ)
		\end{tikzcd}
	\end{equation}where $p_{1,\ast}$ resp. $p_{2,\ast}$ is the first resp. second projection. It turns out, as argued in the proof of \ref{T:main_thm_2}, that $p_{1,\ast}$ is a $\Gamma_P$-homotopy equivalence and consequently this induces $\alpha_J$ resp. $\alpha_I$ for $\ast=\Gamma'$ resp. $\Gamma$. Consequently it is clear that we need to show only that the following two diagrams commute, which is clear from construction 
	\[
	\begin{tikzcd}
		C_\bullet(\cB_{\bullet}(W)_1^{\Gamma'\dashun},\ZZ) \arrow[d] 
		& C_\bullet(Z_{\Gamma'},\ZZ) \arrow[l,"p_{1,\Gamma'}"{sloped,auto}]\arrow[d]\\
		C_\bullet(\cB_{\bullet}(W)_1^{\Gamma\dashun},\ZZ)  
		& C_\bullet(Z_{\Gamma},\ZZ) \arrow[l,"p_{1,\Gamma}"{sloped,auto}]
	\end{tikzcd} \text{and}
	\begin{tikzcd}[row sep=tiny]
		C_\bullet(Z_{\Gamma'},\ZZ)\arrow[dd]\arrow[dr,"p_{2,\Gamma'}"{near start}] &  \\
		& C_\bullet(\mathcal{T}_\bullet(W),\ZZ)   \\
		C_\bullet(Z_\Gamma,\ZZ) \arrow[ur,"p_{2,\Gamma}" '{near start}] & 
	\end{tikzcd}
	\]Here the right vertical map in the first diagram(or the vertical map in the second) is the natural embedding, similar to that of the left vertical embedding. Consequently we get the commutativity of diagram \eqref{E:3}.
\end{proof}

\begin{rem}
	\begin{itemize}
		\item[(i)] Choosing $J$ to be in the descending chain of ideals $\{I^n\}_{n \geq 1}$, we get a sequence of simplicial complexes, $\ldots \subset \cB_\bullet(W)^{\Gamma_P(I^{n+1})\dashun} \subset \cB_\bullet(W)^{\Gamma_P(I^n)\dashun} \subset \ldots$, (and also a corresponding increasing sequence $\ldots \subset \cB_\bullet(W)^{\Gamma_P(I^n)\dashst} \subset \cB_\bullet(W)^{\Gamma_P(I^{n+1})\dashst} \subset \ldots$). The results of this section shows that the corresponding (top dimensional) simplicial homology is compatibly $\Gamma_P$-equivariantly isomorphic to the Steinberg module~$\St(W)$. 
	    \item[(ii)] A different family of subcomplexes of $\cB_\bullet(W)$ homotopy equivalent to the Tits building is also considered in \cite[\S11.2,11.3]{kondo} using the theory of Harder-Narasimhan polygons, and following more closely~\cite{grayson}.  
	\end{itemize} 
\end{rem}

	\printbibliography
\end{document}